\newcommand{\ms}[1]{\mbox{\tiny$#1$}}
\newcommand{\epsh}[2]
          {\begin{array}{c} \hspace{-1.3mm}
         \raisebox{-4pt}{\epsfig{figure=#1,height=#2}}
         \hspace{-1.9mm}\end{array}}
\newcommand{\lk}{\operatorname{lk}}
\newcommand{\coev}{\operatorname{coev}}
\newcommand{\ev}{\operatorname{ev}}
\newcommand{\Id}{\operatorname{Id}}
\newcommand{\End}{\operatorname{End}}
\newcommand{\Hom}{\operatorname{Hom}}
\newcommand{\tr}{\operatorname{tr}}
\newcommand{\Vect}{\operatorname{Vect}}
\newtheorem{theo}{Theorem}[section]
\newtheorem{Def}[theo]{Definition}
\newtheorem{The}[theo]{Theorem}
\newtheorem{Lem}[theo]{Lemma}
\newtheorem{Exem}[theo]{Example}
\newtheorem{Pro}[theo]{Proposition}
\newtheorem{HQ}[theo]{Corollary}
\newtheorem{rmq}[theo]{Remark}
\begin{document}
\title[A Hennings type invariant of $3$-manifolds]{A Hennings type invariant of $3$-manifolds from a topological Hopf superalgebra}        

%\author{Bertrand Patureau-Mirand}
%\address{Laboratoire des Math\'ematiques de Bretagne Atlantique, Universit\'e de Bretagne Sud}
%\email{bertrand.patureau@univ-ubs.fr}

\author{Ngoc Phu HA}        % Tac gia
\address{Universit\'e de Bretagne Sud, Laboratoire de Math\'ematiques de Bretagne Atlantique, UMR CNRS 6205, Centre de Recherche, Campus de Tohannic, BP 573     
F-56017 Vannes, France and Hung Vuong university, Department of mathematics-informatics, Nong Trang, Viet Tri, Phu Tho, VietNam}
\email{ngoc-phu.ha@univ-ubs.fr, ngocphu.ha@hvu.edu.vn}
%\address{Hung Vuong university, Department of mathematics-informatics, Nong Trang, Viet Tri, Phu Tho, Viet Nam}
%\email{ngocphu.ha@hvu.edu.vn}
%\date{01/10/2015}          % Ngay

\maketitle

\begin{abstract}
We prove the unrolled superalgebra $\mathcal{U}_{\xi}^{H}\mathfrak{sl}(2|1)$ has a completion which is a ribbon superalgebra in a topological sense where $\xi$ is a root of unity of odd order. Using this ribbon superalgebra we construct its universal invariant of links. We use it to construct an invariant of $3$-manifolds of Hennings type.

%the unrolled version superalgebra $\mathcal{U}_{\xi}^{H}\mathfrak{sl}(2|1)$ has a completion being a ribbon superalgebra in a topological sense (a topological ribbon superalgebra-used by ''---Ribbon Graphs and their invariants derived from quantum groups---'')
\end{abstract}
%-----------------------------------
\vspace{25pt}

MSC:	57M27, 17B37

Key words: Lie superalgebra, unrolled quantum group, $G$-integral, invariant of $3$-manifolds, Hennings invariant, topological Hopf superalgebra.

\section{Introduction}
The notion of an unrolled quantum group is introduced in \cite{CGP15} by Costantino, Geer and Patureau-Mirand. Then an unrolled quantum group is a quantum group with some additional generators which should be thought of the logarithms of the some other generators, for example in $\mathcal{U}_{q}^{H}\mathfrak{sl}(2)$ the additional generator is an element $H$ with the relation $q^{H}=K$ (see \cite{CGP15}). This element $H$ is a tool to construct a ribbon structure on representations of $\mathcal{U}_{q}^{H}\mathfrak{sl}(2)$. The category of weight modules of $\mathcal{U}_{q}^{H}\mathfrak{sl}(2)$ is ribbon and not semi-simple but the Hopf algebra is not ribbon. With this category $\mathcal{U}_{q}^{H}\mathfrak{sl}(2)$-mod one constructed the invariants of links and of $3$-manifolds (see \cite{CGP15, BePM12}). For the Lie superalgebra $\mathfrak{sl}(2|1)$, the associated unrolled quantum group is denoted by $\mathcal{U}_{\xi}^{H}\mathfrak{sl}(2|1)$ with two additional generators $h_1,\ h_2$ from the quantum group $\mathcal{U}_{\xi}\mathfrak{sl}(2|1)$. Using this unrolled quantum group in \cite{Ha16} one has shown that the category $\mathscr{C}^H$ of nilpotent weight modules over $\mathcal{U}_{\xi}^{H}\mathfrak{sl}(2|1)$ is ribbon and relative $G$-(pre)modular and lead to an invariant of links and of $3$-manifolds. The category $\mathscr{C}^H$ is ribbon thanks to the role of the additional elements $h_1, h_2$ which should be thought as the logarithms of $k_1,\ k_2$, i.e. $\xi^{h_i}=k_i$ for $i=1,2$. They help to construct quasitriangular ribbon structure in $\mathscr{C}^H$. The relations $\xi^{h_i}=k_i \ i=1,2$ also suggest that $k_1,\ k_2$ can be consider as elements of $\mathscr{C}^{\omega}(h_1, h_2)$, the vector space of holomorphic functions on $\mathbb{C}^{2}$. Following this idea we extend the superalgebra $\mathcal{U}_{\xi}^{H}\mathfrak{sl}(2|1)$ to a ribbon superalgebra in a topological sense, the topology determined by the norm of uniform convergence on compact sets. Its bosonization is a ribbon algebra (see in Section 2).

%These elements so give us to build a ribbon structure for $\mathcal{U}_{\xi}^{H}\mathfrak{sl}(2|1)$ in the topological sense but this one has infinite dimensional. 
It is known that for each ribbon Hopf algebra one can construct an universal link invariant (all links are framed and oriented) (see  \cite{habiro2006bottom}, \cite{Ohtsuki02}). From some universal link invariants one could construct a $3$-manifold invariant. There is many way to do this. In \cite{Henning96}, Hennings introduced a method of building an invariant of $3$-manifolds by using an universal link invariant and a right integral. He worked with a finite dimensional ribbon algebra and this condition guarantees the existence of a right integral. In other way, Virelizier and Turaev constructed the invariants which called invariants of $\pi$-links and invariants of $\pi$-manifolds. They began with a ribbon Hopf $\pi$-coalgebra of finite type to construct the invariants of $\pi$-links, after that they renormalized the invariant to invariant of $\pi$-manifolds by using $\pi$-integrals (see \cite{Vire01}). Note that the $\pi$-integrals exist if and only if the group-coalgebra is finite type (see \cite{Vire02}). 
We show that for $\mathcal{U}_{\xi}\mathfrak{sl}(2|1)$ there is an associated Hopf group-coalgebra (see in Section 4). 
The ribbon superalgebra $\mathcal{U}_{\xi}^{H}\mathfrak{sl}(2|1)$ has infinite dimension and the family of quotients $\mathcal{U}_{\overline{\alpha}}^{H}$ of $\mathcal{U}_{\xi}^{H}\mathfrak{sl}(2|1)$ by the ideals $(k_{i}^{\ell}-\xi^{\ell\alpha_i})$ for $i=1, 2$ has a structure of Hopf group-coalgebra in the sense of Turaev-Virelizier which is ribbon but is not finite type, i.e. the method of construction the invariant of $3$-manifolds in \cite{Vire01} does not work here. 
We will present an another approach to construct an invariant of $3$-manifolds. We will use first the ribbon structure of $\mathcal{U}_{\xi}^{H}\mathfrak{sl}(2|1)$ to construct an universal invariant of links. The value of this invariant is represented by a product of a part containing $h_1,\ h_2$ which is a holomorphic function of variables $h_1,\ h_2$ and a part of the elements in copies of $\mathcal{U}_{\xi}\mathfrak{sl}(2|1)$. Assume the link is a surgery link in $S^3$ that produces a closed $3$-manifolds $M$.
Next we use a cohomology class $\omega\in H^{1}(M, G)$ and the discrete Fourier transform to reduce this element. This universal invariant of links
%and the modified trace on ideal of projective modules
allows to construct an invariant of $3$-manifolds $(M,\omega)$ of Hennings type. 

The paper contains four sections. In section 2 we construct the topological ribbon structure of $\mathcal{U}_{\xi}^{H}\mathfrak{sl}(2|1)$ whose bosonization is a topological ribbon algebra. Section 3 builds the universal invariant of links from the topological ribbon superalgebra $\mathcal{U}_{\xi}^{H}\mathfrak{sl}(2|1)$. Finally, in section 4 we define the discrete Fourier transform from the topological ribbon superalgebra to a finite type Hopf $G$-coalgebra. This leads to definition in Theorem \ref{dl chinh1} of an invariant of pair $(M, \omega)$ as above.

%%%%%%%%%%%%%%%%%%%%%%%%%%%%%%%%%%%%%%%%%%%%%%%%%%%%%%%%%%%%%%%%%%%%%%%%%%%%%%%%%%%%%%%%%%%%%%
\subsection*{Acknowledgments}
I would like to thank B. Patureau-Mirand, my thesis advisor, who helped me with this work, and who gave me the motivation to study mathematics. The author would also like to thank the professors and friends in the laboratory LMBA of the Universit\'e de Bretagne Sud and the Centre Henri Lebesgue ANR-11-LABX-0020-01 for creating an attractive mathematical environment.
\section{Topological ribbon Hopf superalgebra $\widehat{\mathcal{U}^H}$} 
In this section we recall the definition of Hopf superalgebra $\mathcal{U}_{\xi}^{H}\mathfrak{sl}(2|1)$ and we construct a topological ribbon Hopf superalgebra $\widehat{\mathcal{U}^H}$ which is a completion of $\mathcal{U}_{\xi}^{H}\mathfrak{sl}(2|1)$.
%%%%%%%%%%%%%%%%%%%%%%%%%%%%%%%%%%%%%%%%%
\subsection{Hopf superalgebra $\widehat{\mathcal{U}^H}$}
\subsubsection{Hopf superalgebra $\mathcal{U}_{\xi}^{H}\mathfrak{sl}(2|1)$}
\begin{Def}[\cite{Ha16}]
Let $\ell\geq 3$ be an odd integer and $\xi=\exp(\frac{2\pi i}{\ell})$.
The superalgebra $\mathcal{U}_\xi\mathfrak{sl}(2|1)$ is an associative superalgebra on $\mathbb{C}$ generated by the elements $k_1,k_2,k_1^{-1},k_2^{-1}, e_1,e_2,f_1,f_2$ 
and the relations
\begin{align}
&k_1k_2=k_2k_1,\\
&k_{i}k_{i}^{-1}=1, \ i=1,2,\\
&k_ie_jk_i^{-1}=\xi^{a_{ij}}e_j,\ k_if_jk_i^{-1}=\xi^{-a_{ij}}f_j \ i,j=1,2,\\
&e_1f_1-f_1e_1=\frac{k_1-k_1^{-1}}{\xi-\xi^{-1}},\ e_2f_2+f_2e_2=\frac{k_2-k_2^{-1}}{\xi-\xi^{-1}},\\
&[e_1, f_2]=0,\ [e_2, f_1]=0,\\
&e_2^{2}=f_2^{2}=0,\\
&e_1^{2}e_2-(\xi+\xi^{-1})e_1e_2e_1+e_2e_1^{2}=0,\label{relation serre 1}\\
&f_1^{2}f_2-(\xi+\xi^{-1})f_1f_2f_1+f_2f_1^{2}=0.
\end{align}
The last two relations are called the Serre relations. The matrix $(a_{ij})$ is given by $a_{11}=2,\ a_{12}=a_{21}=-1,\ a_{22}=0$. The odd generators are $e_2,\ f_2$. 
\end{Def}
%%--------------------
 We define $\xi^{x}:=\exp(\frac{2\pi i x}{\ell})$, afterwards we will use the notation
 $$\{x\}= \xi^{x}-\xi^{-x}.$$
According to \cite{SMkVNt91}, $\mathcal{U}_\xi\mathfrak{sl}(2|1)$ is a Hopf superalgebra with the coproduct, counit and the antipode as below
\begin{align*}
&\Delta(e_i)=e_i \otimes 1 + k_i^{-1} \otimes e_i \quad i=1,2,\\ 
&\Delta(f_i)=f_i \otimes k_i + 1 \otimes f_i \quad i=1,2,\\
&\Delta(k_i)=k_i \otimes k_i \quad i=1,2,\\
&S(e_i)=- k_ie_i,\ S(f_i)=-f_ik_i^{-1},\ S(k_i)=k_i^{-1} \quad i=1,2,\\
&\varepsilon(k_i)=1,\ \varepsilon(e_i)=\varepsilon(f_i)=0 \quad i=1,2.
\end{align*}
%Note that $S(xy)=(-1)^{\deg x\deg y}S(y)S(x)$ for $x, y\in \mathcal{U}_\xi\mathfrak{sl}(2|1)$.???

%%%%
The $\mathbb{C}$-superalgebra $\mathcal{U}_{\xi}^{H}\mathfrak{sl}(2|1)$ as $\mathcal{U}_{\xi}^{H}\mathfrak{sl}(2|1)=\left< \mathcal{U}_\xi\mathfrak{sl}(2|1),\ h_{i}, i=1,2\right>$ with the relations in $\mathcal{U}_\xi\mathfrak{sl}(2|1)$ and $[h_{i},e_{j}]=a_{ij}e_{j},\ [h_{i},f_{j}]=-a_{ij}f_{j}$, $[h_{i},h_{j}]=0,\ [h_{i},k_{j}]=0 \quad i, j = 1, 2$.\\
%%%%
The superalgebra $\mathcal{U}_{\xi}^{H}\mathfrak{sl}(2|1)$ is a Hopf superalgebra where $\Delta,\ S$ and $\varepsilon$ are determined as in $\mathcal{U}_\xi\mathfrak{sl}(2|1)$ and by 
	$$\Delta(h_i)=h_i \otimes 1 + 1 \otimes h_i,\ S(h_i)=-h_i,\  \varepsilon(h_i)=0 \quad i=1,2.$$
%%%
Define the odd elements $e_3=e_1e_2-\xi^{-1}e_2e_1,\ f_3=f_2f_1-\xi f_1f_2$.
%, the relations of $e_3, f_3$ with other generators can see in \cite{BaDaMb97}.
Denote by
\begin{align*}
\mathfrak{B}_{+}&=\{e_{1}^{p}e_{3}^{\rho}e_{2}^{\sigma},\ p \in \{0,1,...,\ell-1\},\ \rho,\sigma \in \{ 0,1 \}\},\\
\mathfrak{B}_{-}&=\{f_{1}^{p'}f_{3}^{\rho'}f_{2}^{\sigma'},\ p' \in \{0,1,...,\ell-1\},\ \rho', \sigma' \in \{ 0,1 \}\},\\
\mathfrak{B}_{0}&=\{k_{1}^{s_{1}}k_{2}^{s_{2}},\ s_{1}, s_{2} \in \mathbb{Z}  \}\ \text{and} \  \mathfrak{B}_{h}=\{h_{1}^{t_{1}}h_{2}^{t_{2}},\ t_{1}, t_{2} \in \mathbb{N}  \}.
\end{align*}
We consider the quotient $\mathcal{U}=\mathcal{U}_\xi\mathfrak{sl}(2|1)/(e_{1}^{\ell}, f_{1}^{\ell})$ which is the Hopf superalgebra having a Poincar\'e-Birkhoff-Witt basis $\mathfrak{B}_{+}\mathfrak{B}_{0}\mathfrak{B}_{-}$ and  $\mathcal{U}^{H}=\mathcal{U}_{\xi}^{H}\mathfrak{sl}(2|1)/(e_{1}^{\ell}, f_{1}^{\ell})$ which is the Hopf superalgebra with a Poincar\'e-Birkhoff-Witt basis $\mathfrak{B}_{+}\mathfrak{B}_{0}\mathfrak{B}_{h}\mathfrak{B}_{-}$.
%%%%%%%%%%%%%%%%%%%%%%%%%%%%%%%%%%%%%%%%%%%%%%%%%%%%%%%%%%%%%%%%%%%%%%%%%%%%%%%%%%%%%%%%%%
\subsubsection{Topological Hopf superalgebra $\widehat{\mathcal{U}^{H}}$}
We recall some notions of topological tensor product and nuclear spaces in \cite{Markus13, Gro52}. A locally convex space $E$ is called nuclear, if all the compatible topologies on $E\otimes F$ agree for all locally convex spaces $F$, i.e. the topology on $E\otimes F$ compatible with $\otimes$ is unique. A topology is compatible with $\otimes$ if: 1) $\otimes:\ E \times F \rightarrow E \otimes F$ is continuous and 2) for all $(e, f) \in E' \times F'$ the linear form $e \otimes f:\ E \otimes F \rightarrow \mathbb{C},\ x \otimes y \mapsto e(x)f(y)$ is continuous \cite{Markus13}.
For two nuclear spaces $E$ and $F$ the completion of the tensor product $E \otimes F$ endowed with its compatible topology is denoted $E \widehat{\otimes} F$. A finite dimensional space is nuclear, the tensor product of two nuclear spaces is nuclear space and a space is nuclear if only if its completion is nuclear \cite{Gro52}. If $V$ is a finite dimensional $\mathbb{C}$-vector space we denote by $\mathscr{C}^{\omega}(V)$ the space of holomorphic functions on $V$ endowed with the topology of uniform convergence on compact sets, it is nuclear space. Remark that we have $\mathscr{C}^{\omega}(V_1) \widehat{\otimes} \mathscr{C}^{\omega}(V_2)\simeq \mathscr{C}^{\omega}(V_1 \times V_2)$ (Theorem 51.6 \cite{Fran67}) where $V_1,\ V_2$ are finite dimensional $\mathbb{C}$-vector spaces. For a quantum group, if $\mathfrak{H}$ is generated by Cartan generators and $V$ is a finite dimensional vector space then elements of $V \widehat{\otimes} \mathscr{C}^{\omega}(\mathfrak{H}^{*})$ can be seen as $V$-valued holomorphic functions.

%A locally convex algebra is a locally convex space $A$ which the structure maps are continuous (with the tensor product $\widehat{\otimes}$)and fulfilling certain compatibility conditions. A nuclear algebra is a locally convex algebra for which the underlying locally convex space is a nuclear space \cite{Markus13}. 
Since the nuclear spaces form a symmetric monoidal category with the product $\widehat{\otimes}$ (\cite{Markus13}), we have the proposition. 
\begin{Pro} \label{completion}  
Let $\mathfrak{H}_i$ be $\mathbb{C}$-vector spaces of dimension $n_i$ and let $V_i$ be finite dimensional vector spaces on $\mathbb{C}$ for $i=1, 2$. Then
\begin{equation*}
(V_1 \otimes \mathscr{C}^{\omega}(\mathfrak{H}_{1}^{*})) \otimes (V_2 \otimes \mathscr{C}^{\omega}(\mathfrak{H}_{2}^{*})) \simeq (V_1 \otimes V_2) \otimes \mathscr{C}^{\omega}(\mathfrak{H}_{1}^{*}\times \mathfrak{H}_{2}^{*}).
\end{equation*}
\end{Pro}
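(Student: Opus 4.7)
The plan is to leverage the symmetric monoidal structure of the category of nuclear spaces under $\widehat{\otimes}$ together with the explicit identification $\mathscr{C}^{\omega}(V_1) \widehat{\otimes} \mathscr{C}^{\omega}(V_2) \simeq \mathscr{C}^{\omega}(V_1 \times V_2)$ recalled above from Trèves. In the statement every $\otimes$ has to be interpreted as $\widehat{\otimes}$; since $V_1,V_2$ are finite-dimensional, nothing is lost by this interpretation, as the tensor product of a finite-dimensional space with any locally convex space is already complete (it is literally a finite direct sum of copies of the second factor), and in particular $V_1 \otimes V_2 = V_1 \widehat{\otimes} V_2$.

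First I would note that every factor appearing on either side is nuclear: $V_i$ is nuclear because it is finite-dimensional, $\mathscr{C}^{\omega}(\mathfrak{H}_i^{*})$ is nuclear by the quoted result of Grothendieck, and $\mathscr{C}^{\omega}(\mathfrak{H}_1^{*} \times \mathfrak{H}_2^{*})$ for the same reason. Hence $\widehat{\otimes}$ is the unique tensor product compatible with the topology throughout, and we may freely apply the coherence (associativity and symmetry) isomorphisms of $(\mathrm{Nuc},\widehat{\otimes})$.

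Second I would rewrite
\begin{equation*}
(V_1 \widehat{\otimes} \mathscr{C}^{\omega}(\mathfrak{H}_1^{*})) \widehat{\otimes} (V_2 \widehat{\otimes} \mathscr{C}^{\omega}(\mathfrak{H}_2^{*})) \simeq (V_1 \widehat{\otimes} V_2) \widehat{\otimes} (\mathscr{C}^{\omega}(\mathfrak{H}_1^{*}) \widehat{\otimes} \mathscr{C}^{\omega}(\mathfrak{H}_2^{*}))
\end{equation*}
by the braiding that transposes the middle two tensor factors, and then replace $V_1 \widehat{\otimes} V_2$ by $V_1 \otimes V_2$ and $\mathscr{C}^{\omega}(\mathfrak{H}_1^{*}) \widehat{\otimes} \mathscr{C}^{\omega}(\mathfrak{H}_2^{*})$ by $\mathscr{C}^{\omega}(\mathfrak{H}_1^{*} \times \mathfrak{H}_2^{*})$ via the two identifications already recalled. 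This yields the claimed isomorphism.

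There is no real obstacle beyond bookkeeping: the entire argument is formal once one has accepted that $\widehat{\otimes}$ endows nuclear spaces with a symmetric monoidal structure and that the completed tensor product of two spaces of holomorphic functions on finite-dimensional complex vector spaces is again a space of holomorphic functions on the product. The only point worth flagging carefully in the write-up is the convention distinguishing algebraic from completed tensor products, which is only a genuine issue for the two $\mathscr{C}^{\omega}$ factors (the $V_i$ being finite-dimensional make the other completions trivial).
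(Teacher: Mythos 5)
Your proposal is correct and follows essentially the same route as the paper's proof: use the symmetric monoidal (associativity and symmetry) structure of nuclear spaces under $\widehat{\otimes}$ to regroup the factors, invoke Theorem 51.6 of Tr\`eves to identify $\mathscr{C}^{\omega}(\mathfrak{H}_{1}^{*}) \widehat{\otimes}\, \mathscr{C}^{\omega}(\mathfrak{H}_{2}^{*})$ with $\mathscr{C}^{\omega}(\mathfrak{H}_{1}^{*}\times \mathfrak{H}_{2}^{*})$, and observe that the completions involving the finite dimensional $V_i$ are trivial. If anything, your justification of $V_i \widehat{\otimes}\, \mathscr{C}^{\omega}(\mathfrak{H}_{i}^{*}) \simeq V_i \otimes \mathscr{C}^{\omega}(\mathfrak{H}_{i}^{*})$ via finite dimensionality of $V_i$ is slightly more precise than the paper's appeal to completeness alone.
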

\begin{proof}
By the symmetric and associative properties of $\widehat{\otimes}$ we have
\begin{equation*}
(V_1 \widehat{\otimes} \mathscr{C}^{\omega}(\mathfrak{H}_{1}^{*})) \widehat{\otimes} (V_2 \widehat{\otimes} \mathscr{C}^{\omega}(\mathfrak{H}_{2}^{*})) \simeq (V_1 \widehat{\otimes} V_2) \widehat{\otimes} \mathscr{C}^{\omega}(\mathfrak{H}_{1}^{*}) \widehat{\otimes} \mathscr{C}^{\omega}(\mathfrak{H}_{2}^{*}).
\end{equation*}
Furthermore, by Theorem 51.6 \cite{Fran67} $\mathscr{C}^{\omega}(\mathfrak{H}_{1}^{*}) \widehat{\otimes} \mathscr{C}^{\omega}(\mathfrak{H}_{2}^{*})\simeq \mathscr{C}^{\omega}(\mathfrak{H}_{1}^{*}\times \mathfrak{H}_{2}^{*})$. It implies 
\begin{equation*}
(V_1 \widehat{\otimes} \mathscr{C}^{\omega}(\mathfrak{H}_{1}^{*})) \widehat{\otimes} (V_2 \widehat{\otimes} \mathscr{C}^{\omega}(\mathfrak{H}_{2}^{*})) \simeq (V_1 \widehat{\otimes} V_2) \widehat{\otimes} \mathscr{C}^{\omega}(\mathfrak{H}_{1}^{*}\times \mathfrak{H}_{2}^{*}).
\end{equation*}
Since the spaces $V_i, \mathscr{C}^{\omega}(\mathfrak{H}_{i}^{*})$ for $i=1, 2$ are complete then $V_i \widehat{\otimes} \mathscr{C}^{\omega}(\mathfrak{H}_{i}^{*}) \simeq V_i \otimes \mathscr{C}^{\omega}(\mathfrak{H}_{i}^{*})$. Thus we get 
\begin{equation*}
(V_1 \otimes \mathscr{C}^{\omega}(\mathfrak{H}_{1}^{*})) \otimes (V_2 \otimes \mathscr{C}^{\omega}(\mathfrak{H}_{2}^{*})) \simeq (V_1 \otimes V_2) \otimes \mathscr{C}^{\omega}(\mathfrak{H}_{1}^{*}\times \mathfrak{H}_{2}^{*}).
\end{equation*}
\end{proof}
The space of entire functions is a nuclear Fr\'echet space obtained as the completion of polynomial functions for the topology of uniform convergence on compact sets. We use a similar completion to define a topological ribbon Hopf superalgebra from $\mathcal{U}_{\xi}^{H}\mathfrak{sl}(2|1)$. That is a topological ribbon Hopf superalgebra   $\widehat{\mathcal{U}_{\xi}^{H}\mathfrak{sl}(2|1)}$ where the topology is constructed as follow.
We consider $\mathcal{U}^{H}\simeq W \otimes_{\mathbb{C}} \mathbb{C}[h_1, h_2, k_{1}^{\pm 1}, k_{2}^{\pm 1}]$ as a vector space on $\mathbb{C}$ where $W$ is a finite dimensional vector space on $\mathbb{C}$ with the basis $$\mathfrak{B}=\mathfrak{B}_{+}\mathfrak{B}_{-}.$$ 
Let $\mathfrak{H}$ be $\mathbb{C}$-vector space with basis $\{h_1, h_2\}$ and $\mathfrak{H}^{*}$ be its dual, let $\mathscr{C}^{\omega}(h_1, h_2)$ be the vector space of holomorphic functions on $\mathbb{C}^{2}\simeq \mathfrak{H}^{*}$.
Now $\mathbb{C}[h_1, h_2, k_{1}^{\pm 1}, k_{2}^{\pm 1}]$ embeds in $\mathscr{C}^{\omega}(h_1, h_2)$ by sending $k_i$ to $\xi^{h_i}=\exp{(\frac{2i\pi}{\ell}h_i)}$. Thus $\mathcal{U}^{H}$ is embedded in $W \widehat{\otimes}_{\mathbb{C}} \mathscr{C}^{\omega}(h_1, h_2):=W {\otimes}_{\mathbb{C}} \mathscr{C}^{\omega}(h_1, h_2)$, in particular $k_i=1\otimes \xi^{h_i}\in W {\otimes}_{\mathbb{C}} \mathscr{C}^{\omega}(h_1, h_2)\ i=1,2$, this space is nuclear. In the following, we show that the completion $\widehat{\mathcal{U}^{H}}:=\widehat{\mathcal{U}_{\xi}^{H}\mathfrak{sl}(2|1)}$ of $\mathcal{U}^{H}$, which is $W {\otimes}_{\mathbb{C}} \mathscr{C}^{\omega}(h_1, h_2)$ has the topological Hopf algebraic structure continuously extended from $\mathcal{U}^{H}$ with the coproduct $\Delta:\ \mathcal{U}^{H} \rightarrow \mathcal{U}^{H} \widehat{\otimes}\ \mathcal{U}^{H}$. 
%%%%%%%%%%
\begin{rmq}\label{cs}
For each $w_i \in \mathfrak{B}$ there exists $\vert w_i\vert= (\vert w_i\vert_1, \vert w_i\vert_2) \in \mathbb{Z}^2$ such that
\begin{align*}
&h_k w_i=w_i (h_k+\vert w_i \vert_k)\  \text{and}\\
&\forall w_i, w_j \in \mathfrak{B} \quad w_iw_j=\sum_{m}w_kc_{ij}^{m}(h_1, h_2), 
\end{align*} 
here $\vert w_i \vert_k\in \mathbb{Z}$ is the weight of $w_i$ for $h_k$ with $k=1,2$.
\end{rmq}
\begin{Lem}[PBW topological basis]
For any $u \in \widehat{\mathcal{U}^{H}}$ there exists unique $Q_{ij}(h_1, h_2) \in \mathscr{C}^{\omega}(h_1, h_2)\ 1\leq i, j \leq 4\ell$ such that
$$u=\sum_{i,j}u_iQ_{ij}(h_1, h_2)v_j $$ 
where $u_i \in \mathfrak{B}_-,\ v_j \in \mathfrak{B}_+$.
\end{Lem}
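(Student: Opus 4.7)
The plan is to exhibit a continuous linear isomorphism
\begin{equation*}
\Psi:W'\otimes_{\mathbb{C}}\mathscr{C}^{\omega}(h_1,h_2)\xrightarrow{\ \sim\ }\widehat{\mathcal{U}^H},\qquad (u_iv_j)\otimes Q\longmapsto u_i\,Q(h_1,h_2)\,v_j,
\end{equation*}
where $W'$ is the finite-dimensional vector space with basis $\mathfrak{B}_-\mathfrak{B}_+$. Surjectivity of $\Psi$ will give the existence part of the lemma, and injectivity the uniqueness part, the coefficients $Q_{ij}$ being read off from $\Psi^{-1}(u)$.

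The crucial algebraic step is the following \emph{swap identity} in $\mathcal{U}$: for each pair $(v_j,u_i)\in\mathfrak{B}_+\times\mathfrak{B}_-$ there exist Laurent polynomials $R_{ij}^{i'j'}\in\mathbb{C}[k_1^{\pm1},k_2^{\pm1}]$, only finitely many non-zero, such that
\begin{equation*}
v_j u_i=\sum_{i',j'} u_{i'}\,R_{ij}^{i'j'}(k_1,k_2)\,v_{j'}.
\end{equation*}
I would prove this by induction on $\deg(v_j)+\deg(u_i)$, using the cross relations $e_1f_1-f_1e_1=(k_1-k_1^{-1})/(\xi-\xi^{-1})$, $e_2f_2+f_2e_2=(k_2-k_2^{-1})/(\xi-\xi^{-1})$, $[e_1,f_2]=[e_2,f_1]=0$, the $k$--$e$ and $k$--$f$ commutations, the Serre relations, and the quotient $e_1^{\ell}=f_1^{\ell}=0$ (which keeps the inductive expansion finite). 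A symmetric argument gives the reverse swap $u_iv_j=\sum v_{j'}\,S_{ij}^{i'j'}(k_1,k_2)\,u_{i'}$, and forward/reverse PBW for $\mathcal{U}$ forces the two Laurent-polynomial matrices $(R)$ and $(S)$ to be mutually inverse.

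I would then define the candidate inverse $\Phi$ of $\Psi$ using $S$ in place of $R$, and apply the shift rule $F(h)\,w=w\,F(h-|w|)$ from Remark \ref{cs} to move holomorphic functions of $h$ past monomials in $\mathfrak{B}_{\pm}$ when combining the swap coefficients with an arbitrary holomorphic coefficient. On the dense subspace $\mathcal{U}^H\subset\widehat{\mathcal{U}^H}$ the compositions $\Psi\circ\Phi$ and $\Phi\circ\Psi$ reduce to the identity by the PBW theorem for $\mathcal{U}^H$; since all matrix entries of $\Psi$ and $\Phi$ are entire functions of $(h_1,h_2)$ under the substitution $k_i=\xi^{h_i}$, both maps are continuous and extend uniquely to mutually inverse isomorphisms on the nuclear completion, which is exactly the statement of the lemma.

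The main obstacle I anticipate is the inductive verification of the swap identity: one must carefully track the Laurent-polynomial coefficients produced by iterated commutation of long monomials, correctly account for the signs arising from the $\mathbb{Z}/2$-grading (especially for the odd generators $e_2,f_2$ and the composite elements $e_3=e_1e_2-\xi^{-1}e_2e_1$, $f_3=f_2f_1-\xi f_1f_2$), and verify that the reductions genuinely close within the finite span indexed by $\mathfrak{B}_-\times\mathfrak{B}_+$ once the quotient $e_1^{\ell}=f_1^{\ell}=0$ is applied.
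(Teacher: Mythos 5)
Your strategy is sound, and it is essentially the argument the paper leaves implicit: the paper states this lemma without proof, treating it as a consequence of the PBW theorem for $\mathcal{U}^H$ together with the identification $\widehat{\mathcal{U}^{H}}\simeq W\otimes_{\mathbb{C}}\mathscr{C}^{\omega}(h_1,h_2)$, and your proposal supplies exactly the missing straightening step. Two small remarks. First, for existence you do not really need density or a continuity argument: since $\widehat{\mathcal{U}^{H}}$ is by construction $W\otimes\mathscr{C}^{\omega}(h_1,h_2)$ with $W$ spanned by the finitely many monomials of $\mathfrak{B}=\mathfrak{B}_{+}\mathfrak{B}_{-}$, it suffices to apply your swap identity $v_ju_i=\sum_{i',j'}u_{i'}R_{ij}^{i'j'}(k_1,k_2)v_{j'}$ (whose coefficients become entire, indeed $\ell$-periodic trigonometric, functions under $k_i=\xi^{h_i}$) to each of these finitely many $w_k$, and then move the holomorphic coefficient through using the weight-shift rule; note that by Remark \ref{cs} the shift goes the other way from what you wrote, $F(h_1,h_2)\,w=w\,F(h_1+\vert w\vert_1,h_2+\vert w\vert_2)$, which is harmless since the shifted function is still entire. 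Second, the phrase ``the matrices $(R)$ and $(S)$ are mutually inverse'' is loose: they are transition matrices between the two families $\{u_iv_j\}$ and $\{v_ju_i\}$ viewed as bases of $\mathcal{U}$ as a free module over the Cartan Laurent ring, and making this precise requires the reverse-order PBW basis $\mathfrak{B}_{-}\mathfrak{B}_{0}\mathfrak{B}_{h}\mathfrak{B}_{+}$ (obtained, e.g., by applying the antipode to the stated basis, or by redoing the straightening) together with the bookkeeping of moving Laurent coefficients across monomials, which only rescales and reindexes them; once that is in place, injectivity of your $\Psi$, i.e.\ the uniqueness claim of the lemma, follows from uniqueness of the decomposition $x=\sum_k w_kx_k(h_1,h_2)$ in $W\otimes\mathscr{C}^{\omega}(h_1,h_2)$, with your continuity-and-density argument as an equivalent alternative. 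With the inductive verification of the swap identity carried out (the point you correctly identify as the real work, including the signs for $e_2,f_2,e_3,f_3$ and closure under $e_1^{\ell}=f_1^{\ell}=0$), your proof is complete.
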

%\textbf{(\ref{x})!!!!, we had $\mathcal{U}^+\mathcal{U}^0\mathcal{U}^-\simeq\mathcal{U}^0\mathcal{U}^+\mathcal{U}^-\simeq\mathcal{U}^-\mathcal{U}^0\mathcal{U}^+$}
By Remark \ref{cs} each $x \in \mathcal{U}^{H}$ can be written 
\begin{equation}\label{x}
x=\sum_{k}w_{k}x_{k}(h_1,h_2)
\end{equation}
where $w_{k} \in \mathfrak{B},\ x_{k}(h_1,h_2) \in \mathscr{C}^{\omega}(h_1, h_2)$.\\
%%%%%%%%
Let $K$ be a compact set in $\mathfrak{H}^{*}$. If $\phi \in K$ and $x(h_1, h_2)\in \mathscr{C}^{\omega}(h_1, h_2)$ then $\phi_*x(h_1, h_2)$ is the evaluation of $x$ at $\phi$, that is $\phi_*x(h_1, h_2)=x(\phi(h_1), \phi(h_2))\in \mathbb{C}$.
Define a norm associated to $K$ on $\widehat{\mathcal{U}^{H}}$ as follow
\begin{align}\label{norm of x}
\| x \|_{K}&=\| \sum_{k}w_{k}x_{k}(h_1,h_2) \|_{K} = \sup_{k}\sup_{\phi \in K}\vert \phi_{*}(x_{k}(h_1,h_2))\vert\\
&=\sup_{k}\sup_{\phi \in K}\vert x_{k}(\phi(h_1),\phi(h_2))\vert \nonumber \ \text{for}\ x\in \widehat{\mathcal{U}^{H}}.
\end{align}
In particular, $\| x \|_{K}=1$ when $x \in \mathfrak{B}$.  
The $\|.\|_{K}$ induces the topology of uniform convergence on compact sets. As $W \otimes \mathscr{C}^{\omega}(h_1, h_2)$ is complete and $\mathcal{U}^H$ is dense in it, we have the proposition below.
\begin{Pro}
The completion of $\mathcal{U}^{H}$ is $W \otimes_{\mathbb{C}} \mathscr{C}^{\omega}(h_1, h_2)$.
\end{Pro}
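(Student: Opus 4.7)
The plan is to combine two standard observations: the ambient space $W \otimes_{\mathbb{C}} \mathscr{C}^{\omega}(h_1, h_2)$ is complete for the family of seminorms $\{\|\cdot\|_K\}_K$ from (\ref{norm of x}), and the image of $\mathcal{U}^H$ under the embedding already defined sits densely inside it. These two properties together characterise $W \otimes_{\mathbb{C}} \mathscr{C}^{\omega}(h_1, h_2)$ as the completion of $\mathcal{U}^H$, up to canonical isomorphism.

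For completeness I would use that $W$ is finite dimensional with basis $\mathfrak{B}$, so $W \otimes_{\mathbb{C}} \mathscr{C}^{\omega}(h_1, h_2)$ is canonically a finite direct sum of copies of $\mathscr{C}^{\omega}(h_1, h_2)$ indexed by $\mathfrak{B}$, on which the norm (\ref{norm of x}) is the maximum over $w \in \mathfrak{B}$ of the $K$-sup seminorm applied to the $w$-th coordinate. Because $\mathscr{C}^{\omega}(h_1, h_2)$ is a Fr\'echet space for uniform convergence on compact subsets of $\mathbb{C}^2 \simeq \mathfrak{H}^{*}$, a finite direct sum of copies of it is again complete. For density, any $y = \sum_{w \in \mathfrak{B}} w \, f_w(h_1, h_2)$ in $W \otimes_{\mathbb{C}} \mathscr{C}^{\omega}(h_1, h_2)$ can be approximated by truncating the Taylor expansion of each $f_w$ at the origin: the partial sums $p_w^{(n)} \in \mathbb{C}[h_1, h_2]$ converge to $f_w$ uniformly on every compact subset of $\mathbb{C}^2$, and the resulting combinations $\sum_w w \, p_w^{(n)}(h_1, h_2)$ lie in $\mathcal{U}^H$ under the PBW identification.

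No serious obstacle is anticipated; the argument is essentially an unpacking of the definitions combined with the classical density of polynomials in the space of entire functions. The only verification worth spelling out is that the image of $\mathcal{U}^H$ in $W \otimes_{\mathbb{C}} \mathscr{C}^{\omega}(h_1, h_2)$ indeed contains $W \otimes \mathbb{C}[h_1, h_2]$: this follows from the commutation rule of Remark \ref{cs}, which allows one to rewrite any PBW monomial so that the polynomial factor in $h_1, h_2$ sits to the right of the $\mathfrak{B}$-factor, and from the fact that $\mathbb{C}[h_1,h_2]$ is a subring of $\mathbb{C}[h_1, h_2, k_1^{\pm 1}, k_2^{\pm 1}]$ under the embedding $k_i \mapsto \xi^{h_i}$.
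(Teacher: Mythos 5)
Your argument is correct and is essentially the paper's own: the proposition there is justified by the one-line remark that $W \otimes \mathscr{C}^{\omega}(h_1, h_2)$ is complete and $\mathcal{U}^{H}$ is dense in it for the seminorms $\|\cdot\|_{K}$, exactly the two points you establish. You merely make explicit the standard details (completeness of a finite direct sum of copies of the Fr\'echet space $\mathscr{C}^{\omega}(h_1,h_2)$, density of $W\otimes\mathbb{C}[h_1,h_2]$ via truncated Taylor series, and the PBW rewriting from Remark \ref{cs}), which the paper leaves implicit.
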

%%%%%%%%%
\begin{Pro}\label{bdt}
For each compact set $K \subset \mathfrak{H}^{*}$, there exists a compact set $K'$ and a $\lambda_K              \in \mathbb{R}$ such that
$\forall\ x,y \in \mathcal{U}^{H}$, we have
\begin{equation*} 
\| xy \|_{K} \leq \lambda_{K} \| x \|_{K'} \| y \|_{K}.
\end{equation*}
\end{Pro}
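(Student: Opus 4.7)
The plan is to expand both $x$ and $y$ in the PBW-style form given by Remark \ref{cs}, reorder the product using the weight relations and the structure constants of the basis $\mathfrak{B}$, and then bound the resulting holomorphic coefficients uniformly on $K$. The asymmetry $\|x\|_{K'}\|y\|_K$ in the target inequality suggests that the enlargement $K \rightsquigarrow K'$ will arise from moving the $h$-coefficients of $x$ past the basis elements of $y$, so I would expect the direction of the shift to dictate on which factor the enlarged norm appears.

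First I would write $x=\sum_{i}w_{i}x_{i}(h_1,h_2)$ and $y=\sum_{j}w_{j}y_{j}(h_1,h_2)$ as in \eqref{x}. To form the product I need the commutation rule $x_i(h_1,h_2)\,w_j = w_j\,x_i(h_1+|w_j|_1,\,h_2+|w_j|_2)$, which follows from the eigenvalue relation $[h_k,w_j]=|w_j|_k w_j$ of Remark \ref{cs} by holomorphic functional calculus (it holds for polynomials and extends by density/continuity to $\mathscr{C}^{\omega}(h_1,h_2)$). Combined with the product expansion $w_iw_j=\sum_m w_m c_{ij}^m(h_1,h_2)$, this gives the normal form
\[
xy = \sum_{i,j,m} w_m\,c_{ij}^m(h_1,h_2)\,x_i(h_1+|w_j|_1,h_2+|w_j|_2)\,y_j(h_1,h_2),
\]
from which the coefficient of $w_m$ is an explicit finite sum.

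The critical choice is $K'$. Since $\mathfrak{B}$ is finite (this is where the quotient by $(e_1^\ell,f_1^\ell)$ is essential), the set
\[
K' = \bigcup_{w_j\in\mathfrak{B}}\bigl(K + (|w_j|_1,|w_j|_2)\bigr)\subset \mathfrak{H}^{*}
\]
is a finite union of translates of $K$, hence compact. For $\phi\in K$ and any $w_j\in\mathfrak{B}$ one has $\phi+|w_j|\in K'$, so $|x_i(\phi+|w_j|)|\leq \|x\|_{K'}$, while trivially $|y_j(\phi)|\leq \|y\|_K$. The structure constants $c_{ij}^m$ are finitely many fixed holomorphic functions on $\mathfrak{H}^{*}$, so
\[
\lambda_K := \sup_{m}\sum_{i,j}\sup_{\phi\in K}|c_{ij}^m(\phi)|
\]
is a finite real number depending only on $K$. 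Taking the modulus, applying the triangle inequality to the coefficient of $w_m$, and taking $\sup_m\sup_{\phi\in K}$ then yields $\|xy\|_K \leq \lambda_K\,\|x\|_{K'}\,\|y\|_K$.

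The main obstacle is not computational but conceptual: ensuring that $K'$ and $\lambda_K$ can be chosen uniformly, i.e.\ independently of $x$ and $y$. This works precisely because $\mathfrak{B}$ is a fixed finite set, so both the weights $|w_j|$ and the structure constants $c_{ij}^m$ range over a finite collection. Without the quotient by $(e_1^\ell,f_1^\ell)$ the basis $\mathfrak{B}$ would be infinite, the set $K'$ would typically fail to be compact, and $\lambda_K$ would generally be infinite; so the argument genuinely uses the truncation introduced in the definition of $\mathcal{U}^H$.
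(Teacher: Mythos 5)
Your proposal is correct and follows essentially the same route as the paper: expand $x,y$ in the form of Remark \ref{cs}, commute the holomorphic coefficients of $x$ past the basis elements of $y$ to get the normal form $\sum_{i,j,m}w_m c_{ij}^m\,x_i(h+|w_j|)\,y_j(h)$, and bound on $K$ with an enlarged compact set for the $x$-factor (the paper takes $K'=K+C$ with $C$ the convex hull of the weights, you take the finite union of translates $K+|w_j|$ -- an immaterial difference, both valid because $\mathfrak{B}$ is finite). Your constant $\lambda_K=\sup_m\sum_{i,j}\sup_{\phi\in K}|c_{ij}^m(\phi)|$, obtained via the triangle inequality, is in fact stated more carefully than the paper's $\sup_k\sup_{\phi\in K}|\sum_{i,j}c_{ij}^k(\phi)|$, so no gap.
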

\begin{proof}
Given $x=\sum_{i}w_{i}x_{i}(h_1,h_2),\ y=\sum_{j}w_{j}y_{j}(h_1,h_2)$ then 
\begin{align*}
xy &=\sum_{i}w_{i}x_{i}(h_1,h_2)\sum_{j}w_{j}y_{j}(h_1,h_2)\\
&=\sum_{i,j}w_{i}w_{j}x_{i}(h_1+\vert w_{j} \vert_{1},h_2+\vert w_{j} \vert_{2})y_{j}(h_1,h_2)\\
&=\sum_{i,j,k}w_{k}c_{i,j}^{k}(h_1,h_2)x_{i}(h_1+\vert w_{j} \vert_{1},h_2+\vert w_{j} \vert_{2})y_{j}(h_1,h_2).
\end{align*}
\begin{align*}
&\| xy \|_{K}\\
&=\sup_{k}\sup_{\phi \in K}\vert\sum_{i,j} c_{i,j}^{k}(\phi(h_1),\phi(h_2))x_{i}(\phi(h_1)+\vert w_{j} \vert_{1},\phi(h_2)+\vert w_{j} \vert_{2})y_{j}(\phi(h_1),\phi(h_2))\vert\\
&\leq \sup_{k}\sup_{\phi \in K}\vert \sum_{i,j}c_{i,j}^{k}(\phi(h_1),\phi(h_2))\vert \sup_{i}\sup_{\phi \in K} \vert x_{i}(\phi(h_1)+\vert w_{j} \vert_{1},\phi(h_2)+\vert w_{j} \vert_{2})\vert\\
& \qquad \sup_{j}\sup_{\phi \in K}\vert y_{j}(\phi(h_1),\phi(h_2))\vert\\
&= \lambda_{K}\| x \|_{K+C} \| y \|_{K}
\end{align*} 
where $\lambda_{K}=\sup_{k}\sup_{\phi \in K}\vert \sum_{i,j}c_{i,j}^{k}(\phi(h_1),\phi(h_2))\vert$ and $C \subset \mathfrak{H}^{*}$ is the convex hull of weights of elements of $\mathfrak{B}$.
\end{proof}
Proposition \ref{bdt} implies that the product on $\mathcal{U}^{H}$ is continuous but there does not seem to exist multiplicative seminorms on $\mathcal{U}^H$. \\
%%%%%%%%%%%%%%%%%%%%%%%%%%%%%%%%%%%%%%%%%%%%%%%%%%%%%%%%%%%%%%%%%%%%%%%
By Proposition \ref{completion} we have $\mathcal{U}^H\widehat{\otimes} \mathcal{U}^H \simeq W^{\otimes 2}\otimes \mathscr{C}^{\omega}(h_{i,j})$ where $h_{i, 1}= h_i \otimes 1,\ h_{i, 2}=1\otimes h_i$ for $i=1, 2$ and the $h_{i,j}$ are seen as coordinates functions on $\mathfrak{H}^*\times \mathfrak{H}^*$. Thus we can write each $x \in \mathcal{U}^H \widehat{\otimes} \mathcal{U}^H$ form $x=\sum_{k}w_k x_{k}(h_{i,j})$ where $w_k \in W^{\otimes 2}$ and $x_{k}(h_{i,j}) \in \mathscr{C}^{\omega}(h_{i,j})$. We can define a norm of $x\in \mathcal{U}^H \widehat{\otimes} \mathcal{U}^H$ associated to a compact set $K_2 \subset \mathfrak{H}^* \times \mathfrak{H}^*$ by
\begin{equation}\label{norm of x, dn 2}
\| x\|_{K_2}=\sup_{k}\sup_{\phi \in K_2}\vert \phi_{*}x_{k}(h_{i,j})\vert= \sup_{k}\sup_{\phi \in K_2}\vert x_{k}(\phi(h_{i,j}))\vert.
\end{equation}
\begin{Pro}
For each compact set $K_2\subset \mathbb{C}^4$, there exists a compact set $K\subset \mathbb{C}^2$ and a $\lambda_{K_2} \in \mathbb{R}$ such that
$\forall\ x \in \mathcal{U}^{H}$, we have $$\|\Delta x\|_{K_2}\leq \lambda_{K_2}\| x\|_{K}.$$
\end{Pro}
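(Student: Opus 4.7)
The plan is to follow the pattern of the proof of Proposition \ref{bdt}, the only new ingredient being a compatibility between $\Delta$ and the holomorphic functional calculus in $h_1,h_2$. Writing $x=\sum_{k}w_k\,x_k(h_1,h_2)$ with $w_k$ ranging over the \emph{finite} basis $\mathfrak{B}$ and $x_k\in\mathscr{C}^{\omega}(h_1,h_2)$, I apply the algebra morphism $\Delta$ to obtain $\Delta x=\sum_k\Delta(w_k)\,\Delta(x_k(h_1,h_2))$ and handle the two factors separately.

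Each $w_k$ is a finite product of the generators $e_i,f_i$, so $\Delta(w_k)$ is a finite $\mathbb{C}$-linear combination of tensors in $e_i,f_i,k_i^{\pm 1}$. Rearranging every such tensor into PBW order in each tensor slot, and using $k_i=\xi^{h_i}$, produces a decomposition of the form
$$\Delta(w_k)=\sum_{\beta}(u_\beta\otimes v_\beta)\,G_{k,\beta}(h_{i,j}),$$
with $u_\beta,v_\beta\in\mathfrak{B}$ and $G_{k,\beta}$ a finite product of $\xi^{\pm h_{i,j}}$, hence holomorphic on $\mathbb{C}^4$; only finitely many pairs $(k,\beta)$ occur. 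For the holomorphic factor, since $\Delta(h_i)=h_{i,1}+h_{i,2}$ and the elements $h_{i,1},h_{j,2}$ commute in $\widehat{\mathcal{U}^H}\widehat{\otimes}\widehat{\mathcal{U}^H}$, substitution is valid for any polynomial and continuity of $\Delta$ extends the formula to all of $\mathscr{C}^{\omega}(h_1,h_2)$:
$$\Delta(x_k(h_1,h_2))=x_k(h_{1,1}+h_{1,2},\,h_{2,1}+h_{2,2}).$$

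Combining, $\Delta x=\sum_{k,\beta}(u_\beta\otimes v_\beta)\,G_{k,\beta}(h_{i,j})\,x_k(h_{1,1}+h_{1,2},h_{2,1}+h_{2,2})$, already in PBW form relative to $W^{\otimes 2}\otimes\mathscr{C}^{\omega}(h_{i,j})$. Given $K_2\subset\mathbb{C}^4$, I define $K\subset\mathbb{C}^2$ as the image of $K_2$ under the continuous sum map $(z_1,z_2,z_3,z_4)\mapsto(z_1+z_3,z_2+z_4)$, so $K$ is compact. For every $\phi\in K_2$, the point $(\phi(h_{1,1})+\phi(h_{1,2}),\phi(h_{2,1})+\phi(h_{2,2}))$ lies in $K$, hence the corresponding evaluation of $x_k$ is bounded by $\|x\|_K$. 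Setting $\lambda_{K_2}=\sum_{k,\beta}\sup_{\phi\in K_2}|G_{k,\beta}(\phi)|$, which is a finite sum of finite suprema of continuous functions on a compact set, the triangle inequality inside the supremum defining $\|\cdot\|_{K_2}$ yields $\|\Delta x\|_{K_2}\leq\lambda_{K_2}\|x\|_K$.

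The principal obstacle is organizational: explicitly exhibiting the PBW decomposition of $\Delta(w_k)$ with holomorphic scalar coefficients $G_{k,\beta}$, which requires commuting $k_i^{\pm 1}$ factors past $e_j,f_j$ using $k_ie_jk_i^{-1}=\xi^{a_{ij}}e_j$ and its analogue for $f_j$, and justifying that $\Delta$ extends continuously from polynomials in $h_1,h_2$ to all of $\mathscr{C}^{\omega}(h_1,h_2)$. Both points rely on Remark \ref{cs} and the nuclearity established above; once they are in place, the bound is immediate.
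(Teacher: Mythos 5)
Your proof is correct and is essentially the paper's own argument: the decomposition $x=\sum_k w_k x_k(h_1,h_2)$, the substitution $\Delta(x_k)=x_k(h_{1,1}+h_{1,2},h_{2,1}+h_{2,2})$, the choice of $K$ as the image of $K_2$ under the sum map $(\varphi,\psi)\mapsto\varphi+\psi$, and the absorption of the finitely many basis coproducts $\Delta(w_k)$ into the constant $\lambda_{K_2}$ all coincide with the paper, which merely handles that last step via a tensor-square analogue of the earlier product estimate $\|xy\|_{K}\leq\lambda_{K}\|x\|_{K'}\|y\|_{K}$ (introducing a shifted compact set) instead of your explicit PBW rewriting of $\Delta(w_k)$ with monomial coefficients $G_{k,\beta}$. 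One caution: do not justify $\Delta(x_k(h_1,h_2))=x_k(h_{1,1}+h_{1,2},h_{2,1}+h_{2,2})$ by ``continuity of $\Delta$ on $\mathscr{C}^{\omega}(h_1,h_2)$'' --- that would be circular, since this proposition is part of what establishes such continuity, and it is also unnecessary, because for $x\in\mathcal{U}^{H}$ the coefficients $x_k$ lie in $\mathbb{C}[h_1,h_2,\xi^{\pm h_1},\xi^{\pm h_2}]$, where the identity follows algebraically from $\Delta(h_i)=h_i\otimes 1+1\otimes h_i$ and $\Delta(k_i)=k_i\otimes k_i$.
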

\begin{proof}
Let $U$ be a compact set, $U=U_1\times U_2\subset \mathfrak{H}^{*}\times \mathfrak{H}^{*} \simeq \mathbb{C}^4$. 
First there exists $\lambda_U \in \mathbb{R}$ such that for any $a, a', b, b' \in \mathcal{U}^{H}$ we have 
\begin{align} \label{bdt phu 1}
\|(a \otimes b)(a' \otimes b') \|_{U_1\times U_2}&=\|aa' \otimes bb' \|_{U_1\times U_2}=\|aa'\|_{U_1}\|bb'\|_{U_2}\\
&\leq \lambda_{U_1}\|a\|_{U_{1}+C_{1}}\|a'\|_{U_{1}}\lambda_{U_2}\|b\|_{U_{2}+C_{2}}\|b'\|_{U_{2}}\nonumber\\
&=\lambda_{U_1}\lambda_{U_2}\|a\|_{U_{1}+C_{1}}\|b\|_{U_{2}+C_{2}}\|a'\|_{U_{1}}\|b'\|_{U_{2}}\nonumber\\
&=\lambda_U \|a\otimes b\|_{U+C_{1} \times C_{2}}\|a' \otimes b'\|_{U}\nonumber\\
&=\lambda_U \|a\otimes b\|_{U'}\|a' \otimes b'\|_{U} \nonumber
\end{align}
where $\lambda_U=\lambda_{U_1}\lambda_{U_2}$ and ${U'}=U+C_{1} \times C_{2}$.
Second let a compact set $K_2\subset \mathfrak{H}^{*}\times \mathfrak{H}^{*}$ and let $K\subset \mathfrak{H}^{*}$ be the compact set $\{\varphi+\psi|\ (\varphi, \psi) \in K_2 \}$. For $x\in \mathcal{U}^{H},\ x=\sum_{j}w_{j}x_{j}(h_1,h_2)$, we have
\begin{align*}
\|\Delta x\|_{K_2}&=\|\sum_{j}\Delta w_{j}\Delta x_{j}(h_1,h_2)\|_{K_2}
\leq \sum_{j}\|\Delta w_{j}\Delta x_{j}(h_1,h_2)\|_{K_2}\\
&=\sum_{j}\|\sum_{s}w_{j}^{1,s}\otimes w_{j}^{2,s}x_{j}(h_{1,1}+h_{1,2}, h_{2,1}+h_{2,2})\|_{K_2}\\
&\leq \sum_{j}\sum_{s}\|w_{j}^{1,s}\otimes w_{j}^{2,s}x_{j}(h_{1,1}+h_{1,2}, h_{2,1}+h_{2,2})\|_{K_2}\\
&\leq \sum_{j}\sum_{s} \lambda_{K_2,j,s}\|w_{j}^{1,s}\otimes w_{j}^{2,s}\|_{K_{2}+(C_1, C_2)}\|x_{j}(h_{1,1}+h_{1,2}, h_{2,1}+h_{2,2})\|_{K_2}\\
&\leq \sum_{j} \lambda_{K_2, j}\|x_{j}(h_{1,1}+h_{1,2}, h_{2,1}+h_{2,2})\|_{K_2}
\end{align*}
where the sums are finite and $ \lambda_{K_2,j,s},\  \lambda_{K_2, j}$ are constants and in the fifth inequality one used Inequality \eqref{bdt phu 1}.
Furthermore, let $\mathfrak{H}$ be vector space on $\mathbb{C}$ with basis $\{h_1, h_2\}$. The symmetric algebra $S(\mathfrak{H}\times \mathfrak{H})\simeq S(\mathfrak{H}\oplus \mathfrak{H})\simeq S\mathfrak{H} \otimes S\mathfrak{H}$ (see \cite{ChKa95}), it is a commutative algebra on $\mathbb{C}$ generated by $h_1\otimes 1,\ h_2 \otimes1,\ 1\otimes h_1,\ 1\otimes h_2$ and $\Hom_{Alg}(S\mathfrak{H} \otimes S\mathfrak{H}, \mathbb{C})\simeq \Hom_{Alg}(S(\mathfrak{H} \times \mathfrak{H}), \mathbb{C})\simeq \Hom_{Vect}(\mathfrak{H} \times \mathfrak{H}, \mathbb{C})\simeq (\mathfrak{H} \times \mathfrak{H})^*\simeq \mathfrak{H}^* \times \mathfrak{H}^*$. This isomorphism allows that for $(\varphi, \psi)\in \mathfrak{H}^* \times \mathfrak{H}^*$ one has $(\varphi, \psi)(h_i\otimes 1)=\varphi(h_i)$ and $(\varphi, \psi)(1\otimes h_i)=\psi(h_i)$ for $i=1, 2$.
It implies that
\begin{align*}
 \|x_{j}(h_{1,1}+&h_{1,2},\ h_{2,1}+h_{2,2})\|_{K_2}\\
&=\sup_{(\varphi, \psi)\in K_2} |(\varphi, \psi)_{*}x_{j}\left(h_{1,1}+h_{1,2}, h_{2,1}+h_{2,2}\right)|\\
&=\sup_{(\varphi, \psi)\in K_2} |(\varphi+ \psi)_{*}x_{j}\left(h_{1}, h_{2}\right)|=\|x_{j}(h_{1}, h_{2})\|_{K}.
\end{align*}
Hence $$\|\Delta x\|_{K_2} \leq \sum_{j} \lambda_{K_2, j}\|x_{j}(h_{1}, h_{2})\|_{K} \leq \lambda_{K_2}\|x\|_{K}$$
where $\lambda_{K_2}$ is a constant.
\end{proof}
This proposition implies that the coproduct is continuous. The antipode $S$ is also continuous by proposition below.
\begin{Pro}
For each compact set $K \subset \mathfrak{H}^{*}$ there exists a compact set $K'' \subset \mathfrak{H}^{*}$ and a constant $\lambda_K$ such that
$$\|S(x)\|_{K}\leq \lambda_K \|x\|_{K''} \ \text{for} \ x \in \mathcal{U}^{H}.$$
\end{Pro}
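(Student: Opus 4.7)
The plan is to parallel Proposition \ref{bdt}: expand $S(x)$ in the PBW topological basis and bound each coefficient uniformly on $K$. Starting from the decomposition $x = \sum_k w_k x_k(h_1, h_2)$ with $w_k$ ranging over the finite set $\mathfrak{B}$, I first observe that since $S$ is an anti-superalgebra-homomorphism with $S(h_i) = -h_i$ and $S(k_i) = k_i^{-1}$, and these two relations are consistent under the embedding $k_i = \xi^{h_i}$, the unique continuous extension of $S$ to $\mathscr{C}^{\omega}(h_1, h_2)$ is $S(x_k(h_1, h_2)) = x_k(-h_1, -h_2)$.

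For each basis element $w_k \in \mathfrak{B}$, applying $S$ (which reverses the order of products and sends each generator to something like $-k_i e_i$ or $-f_i k_i^{-1}$) and then reordering into PBW form while absorbing the $k_i$'s into holomorphic coefficients yields a \emph{finite} expansion
\begin{equation*}
S(w_k) = \sum_m w_m s_k^m(h_1, h_2), \qquad s_k^m \in \mathscr{C}^{\omega}(h_1, h_2).
\end{equation*}
Combining these two ingredients and then moving $x_k(-h_1, -h_2)$ past each $w_m$ via the shift rule of Remark \ref{cs} produces
\begin{equation*}
S(x) = \sum_{m} w_m \Bigl( \sum_{k} x_k(-h_1 - |w_m|_1, -h_2 - |w_m|_2) \, s_k^m(h_1, h_2) \Bigr).
\end{equation*}

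The estimate then follows by choosing $K'' = \bigcup_{w_m \in \mathfrak{B}}(-K - |w_m|)$, which is compact as a finite union of translates of $-K$. For every $\phi \in K$ and every $m$, the shifted point $(-\phi(h_1) - |w_m|_1, -\phi(h_2) - |w_m|_2)$ lies in $K''$, so $|x_k(-\phi - |w_m|)| \leq \|x_k\|_{K''} \leq \|x\|_{K''}$. Reading off the coefficient of $w_m$ in $S(x)$ and applying the triangle inequality over the finite index set of $k$'s yields
\begin{equation*}
\|S(x)\|_K \leq \|x\|_{K''} \sup_m \sum_k \sup_{\phi \in K} |s_k^m(\phi(h_1), \phi(h_2))| =: \lambda_K \|x\|_{K''},
\end{equation*}
with $\lambda_K < \infty$ because the sums are finite and each $s_k^m$ is continuous on the compact set $K$.

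The main nontrivial point is the finiteness of the expansion $S(w_k) = \sum_m w_m s_k^m$: one has to check that applying $S$ to a generator-by-generator product and reordering really lands back in $W \otimes \mathscr{C}^{\omega}(h_1, h_2)$ with a finite number of PBW summands. This is routine since $W$ is finite-dimensional and the only new ingredient is absorbing factors $k_i^{\pm 1} = \xi^{\pm h_i}$ as holomorphic coefficients. Everything else is formal bookkeeping, parallel to the continuity proof for the product.
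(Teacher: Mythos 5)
Your proposal is correct and follows essentially the same route as the paper: both rest on $S$ being an anti-homomorphism acting on the holomorphic part by $x_k(h_1,h_2)\mapsto x_k(-h_1,-h_2)$, the finiteness of the PBW expansion, and a reflected/translated compact set ($-(K+C)$ in the paper, $\bigcup_m(-K-\vert w_m\vert)$ for you). The only difference is cosmetic: the paper bounds $\|x_j(-h_1,-h_2)\,S(w_j)\|_K$ by directly invoking the product estimate of Proposition \ref{bdt}, whereas you unfold that estimate by re-expanding $S(w_k)$ in the PBW basis and commuting coefficients by hand.
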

\begin{proof}
For $x=\sum_{j}w_{j}x_{j}(h_1,h_2)\in \mathcal{U}^{H}$ we have
\begin{align*}
\|S(x)\|_{K}&=\|\sum_{j}S(x_{j}(h_1,h_2)) S(w_{j})\|_{K}=\|\sum_{j}x_{j}(-h_1,-h_2) S(w_{j})\|_{K}\\
&\leq \sum_{j}\|x_{j}(-h_1,-h_2) S(w_{j})\|_{K}\\
&\leq \sum_{j} \lambda_{K,j}\|x_{j}(-h_1,-h_2)\|_{K'}\|S(w_{j})\|_{K}\\
&\leq \sum_{j} \lambda_{K,j}^{'}\|x_{j}(h_1,h_2)\|_{-K'}
\leq \lambda_K \|x\|_{-K'}
\end{align*}
where $\lambda_{K,j},\ \lambda_{K,j}^{'}$ and $\lambda_{K}$ are  constants.
\end{proof}
It is clear that the unit and counit are continuous. Hence the maps product, coproduct, unit, counit and the antipode of $\mathcal{U}^{H}$ are continuous (with the topology of uniform convergence on compact sets). Thus the topology of uniform convergence on compact sets of $\mathcal{U}^H$ is compatible with its algebraic structure. The maps product, coproduct, unit, counit and the antipode of $\mathcal{U}^{H}$ continuously extend to the completion $\widehat{\mathcal{U}^{H}}$. Note that the coproduct $\mathcal{U}^{H} \rightarrow \mathcal{U}^{H}\otimes \mathcal{U}^{H}$ extends to $\widehat{\mathcal{U}^{H}} \rightarrow \mathcal{U}^{H}\widehat{\otimes} \mathcal{U}^{H}$. The space $\widehat{\mathcal{U}^{H}}$ endows with these continuous maps is a topological Hopf superalgebra.  

%%%%
Similarly, for $n\geq 2$ denote 
\begin{equation}\label{Eq hij}
h_{i,j}=1 \otimes ... \otimes h_{i} \otimes ... \otimes 1
\end{equation} 
where $h_{i}$ is in $j$-th position for $1\leq i \leq 2$ and $1\leq j \leq n$. Then the completion of $\mathcal{U}^{H\otimes n}$ is topological vector space $\mathcal{U}^{H\widehat{\otimes} n}\simeq W^{\otimes n} \otimes \mathscr{C}^{\omega}(h_{i,j})$ with the topology of uniform convergence on compact sets. Here $W^{\otimes n}$ is the tensor product of $n$ copies of $W$ and $\mathscr{C}^{\omega}(h_{i,j})$ is the vector space of holomorphic functions of $2n$ variables $\{h_{i,j}\}_{i=1,2}^{j=1,\ ...,n}$ in $\mathbb{C}^{2n}$.
Note also that the maps $\Delta_{i}^{[n]}:\  \mathcal{U}^{H\otimes n} \rightarrow \mathcal{U}^{H\otimes (n+1)}$ and $\varepsilon_{i}^{[n]}:\  \mathcal{U}^{H\otimes n} \rightarrow \mathcal{U}^{H\otimes (n-1)}$ continuously extend to $\mathcal{U}^{H\widehat{\otimes} n}$, 
here $\Delta_{i}^{[n]}$ and $\varepsilon_{i}^{[n]}$ determined by
\begin{equation*}
\Delta_{i}^{[n]}= \underbrace{\Id \otimes ... \otimes \Id }_{i-1}\otimes \Delta \otimes \underbrace{\Id \otimes ... \otimes \Id}_{n-i}
\end{equation*} 
and
\begin{equation*}
\varepsilon_{i}^{[n]}= \underbrace{\Id \otimes ... \otimes \Id }_{i-1}\otimes \varepsilon \otimes \underbrace{\Id \otimes ... \otimes \Id}_{n-i}
\end{equation*}
where $\Delta,\ \varepsilon$ are in $i$-th position. It follows that 
\begin{align*}
&\Id \otimes \Delta_{i}^{[n]} =\Delta_{i+1}^{[n+1]}, \Delta_{i}^{[n]} \otimes \Id = \Delta_{i}^{[n+1]}, \\
& \Delta_{i}^{[n+1]} \circ \Delta_{i}^{[n]} = \Delta_{i+1}^{[n+1]} \circ \Delta_{i}^{[n]},\\
&\Delta_{j}^{[n+1]} \circ \Delta_{i}^{[n]}=\Delta_{i}^{[n+1]} \circ \Delta_{j}^{[n]} \quad i\neq j
\end{align*}
and we denote $\Delta^{[n]}(x)=\sum x_{(1)}\otimes ... \otimes x_{(n)}$ for $x\in \mathcal{U}^{H}$.
%%%
Hence, each element $x$ of $\mathcal{U}^{H\widehat{\otimes} n}$ can be written $x=\sum_{k} w_{k} x_{k}(h_{i,j})$ where $w_{k} \in \mathfrak{B}^{\otimes n},\ x_{k}(h_{i,j}) \in \mathscr{C}^{\omega}(h_{i,j}):=\mathscr{C}^{\omega}(\mathfrak{H}^{* n})$. In particular, the element $k_{i,j}:=1\otimes ...\otimes k_i\otimes ...\otimes 1$ where $k_i$ is in $j$-th position is equal to $\xi^{h_{i,j}}=1\otimes ...\otimes \xi^{h_i}\otimes ...\otimes 1$ for $i=1, 2\ j=1,...,n$.
Let $K$ be a compact set in $\mathbb{C}^{2n} \simeq Vect_{\mathbb{C}}(h_{i,j})^{*}$. As in Definition \eqref{norm of x} we define 
$$\| x \|_{K}=\sup_{k} \sup_{\phi \in K} \vert \phi_{*}(x_{k}(h_{i,j}))\vert=\sup_{k} \sup_{\phi \in K} \vert x_{k}(\phi(h_{i,j}))\vert.$$
Recall that $\mathscr{C}^{H}$ is the even category of finite dimensional nilpotent modules over $\mathcal{U}^{H}$ (see in \cite{Ha16}).
\begin{Pro}
For any $V_1, ..., V_n \in \mathscr{C}^{H}$ the representation $\rho_{V_1\otimes ...\otimes V_n}:\ \mathcal{U}^{H\otimes n} \rightarrow \End_{\mathbb{C}}(V_1\otimes ...\otimes V_n)$ continuously extends to a representation $\mathcal{U}^{H\widehat{\otimes} n} \rightarrow \End_{\mathbb{C}}(V_1\otimes ...\otimes V_n)$.
\end{Pro}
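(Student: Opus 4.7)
The plan is to show directly that $\rho_{V_1 \otimes \cdots \otimes V_n}$ is continuous from $\mathcal{U}^{H \otimes n}$ with its compact-open seminorms into the finite-dimensional space $\End_{\mathbb{C}}(V_1 \otimes \cdots \otimes V_n)$ equipped with any operator norm. Since the target is complete and $\mathcal{U}^{H \otimes n}$ is dense in its completion $\mathcal{U}^{H \widehat{\otimes} n}$, the continuous extension will then be automatic by the universal property of completions.

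The first step is to exploit that each $V_j \in \mathscr{C}^H$ is a finite-dimensional weight module: the commuting operators $\rho_{V_j}(h_1), \rho_{V_j}(h_2)$ act diagonalizably with a finite set of eigenvalues $\Lambda_j \subset \mathbb{C}^2$. Consequently on $V_1 \otimes \cdots \otimes V_n$ the $2n$ commuting operators $\rho(h_{i,j})$ act diagonalizably with joint spectrum contained in the finite, hence compact, set
$$K_0 = \Lambda_1 \times \cdots \times \Lambda_n \subset \mathbb{C}^{2n} \simeq (\mathfrak{H}^{*})^n.$$
Via the identification of elements of $K_0$ with linear forms on $\mathrm{Vect}_{\mathbb{C}}(h_{i,j})$, each joint weight vector $v$ of weight $\lambda \in K_0$ satisfies $\rho(x_k(h_{i,j})) v = x_k(\lambda) v$ for any holomorphic $x_k \in \mathscr{C}^{\omega}(h_{i,j})$.

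The second step is the norm estimate. Given $x \in \mathcal{U}^{H \otimes n}$, write it in the PBW form $x = \sum_k w_k\, x_k(h_{i,j})$ with $w_k \in \mathfrak{B}^{\otimes n}$ and $x_k \in \mathscr{C}^{\omega}(h_{i,j})$. Because $|\mathfrak{B}| = 16 \ell^2$ is finite, $\mathfrak{B}^{\otimes n}$ is a fixed finite set, and the operators $\rho(w_k) \in \End_{\mathbb{C}}(V_1 \otimes \cdots \otimes V_n)$ have bounded operator norm. Since $\rho(x_k(h_{i,j}))$ is diagonal on the weight basis with eigenvalues $\{x_k(\lambda)\}_{\lambda \in K_0}$, its operator norm is exactly $\sup_{\lambda \in K_0} |x_k(\lambda)| \leq \|x\|_{K_0}$. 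Combining these,
\begin{align*}
\|\rho(x)\|_{\mathrm{op}} \leq \sum_{k} \|\rho(w_k)\|_{\mathrm{op}}\, \|\rho(x_k(h_{i,j}))\|_{\mathrm{op}} \leq C\, \|x\|_{K_0},
\end{align*}
where $C = \sum_k \|\rho(w_k)\|_{\mathrm{op}}$ is a finite constant depending only on $V_1, \ldots, V_n$.

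The third step is to conclude: the inequality above shows $\rho_{V_1\otimes \cdots \otimes V_n}$ is continuous with respect to the compact-open topology on $\mathcal{U}^{H \otimes n}$. As $\End_{\mathbb{C}}(V_1 \otimes \cdots \otimes V_n)$ is finite-dimensional and complete, the map extends uniquely by continuity to $\mathcal{U}^{H \widehat{\otimes} n}$, and the extension is still an algebra homomorphism since multiplication is separately continuous on both sides. The only real subtlety — the hard step conceptually — is verifying the finiteness of the joint spectrum of $(h_{i,j})_{i,j}$ on $V_1 \otimes \cdots \otimes V_n$, which is precisely where the defining hypothesis of $\mathscr{C}^H$ (nilpotent weight modules with diagonalizable Cartan action) enters the argument.
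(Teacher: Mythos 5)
Your proof is correct and follows essentially the same route as the paper: both bound the operator norm of $\rho_V(x)$, for $x=\sum_k w_k x_k(h_{i,j})$ in PBW form, by the seminorm $\|x\|_{K}$ associated to the (finite, hence compact) set of weights of $V_1\otimes\cdots\otimes V_n$, using that the Cartan part acts diagonally on weight vectors and that $\mathfrak{B}^{\otimes n}$ is finite, and then extend by density and completeness of $\End_{\mathbb{C}}(V_1\otimes\cdots\otimes V_n)$. Your write-up is in fact slightly more explicit than the paper's about where the weight-module hypothesis of $\mathscr{C}^H$ enters, but the argument is the same.
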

\begin{proof}
Let $K$ be the compact set containing the weights of $V=V_1\otimes ...\otimes V_n$. We have $\rho_{V}:\ \mathcal{U}^{H\otimes n} \rightarrow \End(V)$ be continuous on compact set $K$. Indeed, let $x\in \mathcal{U}^{H \otimes n}$ and write $x=\sum_{k} w_{k} x_{k}(h_{i,j})$.
On the subspace of weights $\phi \in K$, $\rho_V\left(\sum_{k}w_k x_k(h_{i, j})\right)$ acts as $\|\rho_V\left(\sum_{k}w_k x_k(\phi(h_{i, j}))\right) \|\leq \sum_{k}\|w_k\|_{K}\|x_k\|_{K}\leq \lambda_{K}\|x\|_{K}$ with $\lambda_K$ is a constant. It implies that $\rho_V$ is continuous. This prove that it exists a continuous representation $\widehat{\rho_V}:\  \mathcal{U}^{H\widehat{\otimes} n} \rightarrow \End_{\mathbb{C}}(V)$.
\end{proof}
%%%%%%%%%%%%%%%%%%%%%%%%%%%%%%%%%%%%%%%%%%%%%%%%%%%%%%%%%%%%%%%%%%%%%%%%%%%%%%%%%%%%%%%%%
\subsection{Topological ribbon superalgebra $\widehat{\mathcal{U}^{H}}$}	
%Recall that $\mathscr{C}^{H}$ is the even category of nilpotent finite dimensional modules over $\mathcal{U}^{H}$. 
It is known in \cite{Ha16} that the operator $\mathcal{R}=\check{\mathcal{R}}\mathcal{K}$ on $\mathscr{C}^{H}$ where
\begin{align}\label{Eq K}
&\check{\mathcal{R}}=\sum_{i=0}^{\ell-1}\frac{\{1\}^{i}e_1^{i} \otimes f_1^{i}}{(i)_{\xi}!} \sum_{\rho=0}^{1}\frac{(-\{1\})^{\rho}e_3^{\rho} \otimes f_3^{\rho}}{(\rho)_{\xi}!}\sum_{\delta=0}^{1}\frac{(-\{1\})^{\delta}e_2^{\delta} \otimes f_2^{\delta}}{(\delta)_{\xi}!} \in \mathcal{U}^{H} \otimes \mathcal{U}^{H}, \nonumber\\
& (0)_{\xi}!=1,\ (i)_{\xi}!=(1)_{\xi}(2)_{\xi}\cdots (i)_{\xi},\ (k)_{\xi}=\frac{1-\xi^k}{1-\xi}\quad \text{and} \nonumber  \\
& \mathcal{K}=\xi^{-h_1 \otimes h_2 -h_2 \otimes h_1 - 2h_2 \otimes h_2} \in \mathcal{U}^{H\widehat{\otimes} 2}
\end{align}
satisfies these conditions below
\begin{align*} \label{dkcm}
&\Delta \otimes \Id(\mathcal{R})=\mathcal{R}_{13}\mathcal{R}_{23},\\
&\Id \otimes \Delta(\mathcal{R})=\mathcal{R}_{13}\mathcal{R}_{12},\\
&\mathcal{R}\Delta^{op}(x)=\Delta(x)\mathcal{R} \ \text{for all} \ x \in \mathcal{U}^{H}. 
\end{align*}
This operator is given by action of an element $\mathcal{R}$ is in the completion $\mathcal{U}^{H\widehat{\otimes} 2}$, so the proof of the lemma below follows the line of Theorem VIII.2.4 \cite{ChKa95}.
\begin{Lem}
The element $\mathcal{R}=\check{\mathcal{R}}\mathcal{K}$ is a topological universal $R$-matrix of the topological Hopf superalgebra $\widehat{\mathcal{U}^{H}}$.
\end{Lem}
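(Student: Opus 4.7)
The plan is to verify the three defining identities of a topological universal $R$-matrix — the hexagon identities $(\Delta\otimes\Id)(\mathcal R)=\mathcal R_{13}\mathcal R_{23}$ and $(\Id\otimes\Delta)(\mathcal R)=\mathcal R_{13}\mathcal R_{12}$, together with the quasi-triangularity relation $\mathcal R\Delta^{op}(x)=\Delta(x)\mathcal R$ for every $x\in\widehat{\mathcal U^H}$ — as genuine equalities in the completions $\mathcal U^{H\widehat{\otimes} n}$ for $n=2,3$. Following the hint in the excerpt, I would adapt the proof of Theorem VIII.2.4 of \cite{ChKa95} and split the computation along the factorization $\mathcal R=\check{\mathcal R}\mathcal K$, where $\check{\mathcal R}$ is a finite sum in $\mathcal U^H\otimes\mathcal U^H$ and $\mathcal K\in\mathcal U^{H\widehat{\otimes} 2}$ is the exponential Cartan part.

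For the Cartan factor, since $\Delta(h_i)=h_i\otimes 1+1\otimes h_i$ and the exponent in $\mathcal K$ is a sum of tensors of mutually commuting Cartan elements, a direct manipulation of exponentials yields $(\Delta\otimes\Id)(\mathcal K)=\mathcal K_{13}\mathcal K_{23}$ and $(\Id\otimes\Delta)(\mathcal K)=\mathcal K_{13}\mathcal K_{12}$; moreover the conjugation $\mathcal K(x\otimes y)\mathcal K^{-1}$ supplies precisely the weight-dependent scalars required to offset the commutation of $k_i^{\pm 1}$ past $e_j,f_j$. For $\check{\mathcal R}$, all identities are finite algebraic manipulations with $q$-binomials and the Serre-type relations for $e_1,e_2,e_3,f_1,f_2,f_3$. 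Assembling the two factors then produces both hexagon relations.

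For quasi-triangularity I would observe that the set of $x$ satisfying $\mathcal R\Delta^{op}(x)=\Delta(x)\mathcal R$ is a closed subalgebra of $\widehat{\mathcal U^H}$, because $\Delta$ and $\Delta^{op}$ are continuous algebra maps and multiplication in $\mathcal U^{H\widehat{\otimes} 2}$ is continuous. By density of $\mathcal U^H$ in $\widehat{\mathcal U^H}$ it then suffices to check the relation on the generators $h_i,e_i,f_i$. The check on $h_i$ is immediate since $\Delta=\Delta^{op}$ on the Cartan part and $\mathcal K$ commutes with $\Delta(h_i)$; on $e_i$ and $f_i$ the exponential factor $\mathcal K$ furnishes exactly the extra $k_i^{\pm 1}$ needed to convert $\Delta^{op}(e_i)=1\otimes e_i+e_i\otimes k_i^{-1}$ into $\Delta(e_i)=e_i\otimes 1+k_i^{-1}\otimes e_i$ (and similarly for $f_i$) after multiplication by $\check{\mathcal R}$, following the standard $U_q\mathfrak{sl}_2$ calculation.

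The main obstacle, and the genuine novelty of the statement beyond Kassel's proof, is the topological upgrade: each of the above identities is straightforward as a formal equality of finite sums and exponentials, but it must be promoted to an equality in the completed spaces $\mathcal U^{H\widehat{\otimes} n}$. This rests on the material just established: the continuity of product, coproduct and antipode for the topology of uniform convergence on compact sets (Proposition \ref{bdt} and its coproduct/antipode analogues), together with density of $\mathcal U^H$ in its completion, ensures that any identity between continuous maps verified on a dense subalgebra passes to the completion. Equivalently and perhaps more conceptually, one may test the identities through the family of representations in $\mathscr C^H$: elements of $\mathcal U^{H\widehat{\otimes} n}$ act on weight modules as holomorphic functions of the weights, so equality of the actions on a generic set of weights forces equality in the completion, converting the operator identities already recorded before the lemma into element identities in $\mathcal U^{H\widehat{\otimes} n}$.
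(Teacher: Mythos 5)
Your proposal is correct in substance and is essentially the argument the paper leaves implicit: the paper gives no computation of its own, but recalls from \cite{Ha16} that the operator defined by $\mathcal{R}=\check{\mathcal{R}}\mathcal{K}$ on $\mathscr{C}^{H}$ satisfies the two coproduct identities and $\mathcal{R}\Delta^{op}(x)=\Delta(x)\mathcal{R}$, observes that this operator is the action of an element of $\mathcal{U}^{H\widehat{\otimes}2}$, and states that the proof ``follows the line of'' Theorem VIII.2.4 of \cite{ChKa95}. Your primary route --- a direct Kassel-style verification in the completion, split along the Cartan exponential $\mathcal{K}$ and the finite quasi-$R$-matrix $\check{\mathcal{R}}$, with the quasitriangularity relation reduced to the generators because its solution set is a closed subalgebra and $\mathcal{U}^{H}$ is dense --- is this same strategy made explicit, and your continuity/density step uses exactly the topological facts (continuity of product, coproduct, antipode and of the maps $\Delta_i^{[n]}$) established before the lemma, which is what legitimizes the exponential manipulations inside $W^{\otimes n}\otimes\mathscr{C}^{\omega}(h_{i,j})$. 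Three caveats. First, the generator check is not literally ``the standard $U_q\mathfrak{sl}_2$ calculation'': here $\Delta^{op}$ is the super-opposite coproduct via $\tau^{s}$, $\check{\mathcal{R}}$ contains the odd root vectors $e_2\otimes f_2$ and $e_3\otimes f_3$, and assembling the two factors in the hexagon identities requires the $\mathcal{K}$-conjugation formulas for all three root vectors; these computations are exactly the ones performed (at the operator level) in \cite{Ha16} and transfer verbatim because only the defining relations are used, so you should invoke them rather than leave ``finite algebraic manipulations'' unargued. Second, your alternative route --- converting the operator identities on $\mathscr{C}^{H}$ into element identities by testing on weight modules --- presupposes that these modules separate points of $\mathcal{U}^{H\widehat{\otimes} n}$, a statement the paper proves only for central elements (Proposition \ref{u=0}); as written this is a plausible but unproven step, so keep the direct verification as the main argument or supply such a separation lemma. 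Third, to conclude that $\mathcal{R}$ is a universal $R$-matrix you should also record its invertibility, which is immediate since $\mathcal{K}$ is an invertible exponential in the Cartan variables and $\check{\mathcal{R}}-1\otimes 1$ is nilpotent.
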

%%%%%%%%%%%
The element $\mathcal{R}$ satisfies the properties 
\begin{align*}
&\mathcal{R}_{12}\mathcal{R}_{13}\mathcal{R}_{23}=\mathcal{R}_{23}\mathcal{R}_{13}\mathcal{R}_{12},\\
&(\varepsilon \otimes \Id_{\widehat{\mathcal{U}^{H}}})(\mathcal{R})=1=(\Id_{\widehat{\mathcal{U}^{H}}} \otimes \varepsilon)(\mathcal{R}),\\
&(S \otimes \Id_{\widehat{\mathcal{U}^{H}}})(\mathcal{R})=\mathcal{R}^{-1}=(\Id_{\widehat{\mathcal{U}^{H}}} \otimes S^{-1})(\mathcal{R}),\\
&(S \otimes S)(\mathcal{R})=\mathcal{R}.
\end{align*}
The completion $\widehat{\mathcal{U}^{H}}$ of $\mathcal{U}^{H}$ is a Hopf $\mathbb{C}$-superalgebra which has a pivotal element $\phi_{0}=k_{1}^{-\ell}k_{2}^{-2}$ (see Proposition 3.3 \cite{Ha16}). We define an even element $\theta$, invertible and in the center of $\widehat{\mathcal{U}^{H}}$ by
\begin{equation}\label{twist not sigma}
\theta =\phi_{0}.(m \circ \tau^{s} \circ (\Id \otimes S)(\mathcal{R}))^{-1}
\end{equation}
where $\tau^{s}:\ \mathcal{U}^{H}\widehat{\otimes}\mathcal{U}^{H}  \rightarrow \mathcal{U}^{H}\widehat{\otimes}\mathcal{U}^{H},\ x \otimes y \mapsto (-1)^{\deg x \deg y}y \otimes x$ is super-flip of $\mathcal{U}^{H}\widehat{\otimes}\mathcal{U}^{H}$. \\
We now show that the completion $\widehat{\mathcal{U}^{H}}$ with the element $\theta$ will be a ribbon Hopf superalgebra.
\begin{Pro}\label{steta}
The $\theta$ is a twist, i.e. the element $\theta$ satisfies
\begin{enumerate}
\item $\varepsilon(\theta)=1$,
\item $\Delta(\theta)=\tau^{s}(\mathcal{R}).\mathcal{R}.(\theta \otimes \theta)$,
\item $S(\theta)=\theta$.
\end{enumerate}
\end{Pro}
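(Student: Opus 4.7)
The plan is to adapt the classical derivation of the twist conditions in a pivotal quasitriangular Hopf algebra (cf.\ Theorems VIII.4.1 and XIV.6.1 of \cite{ChKa95}) to the super and topological setting of $\widehat{\mathcal{U}^H}$. Writing $u := m\circ\tau^s\circ(\Id\otimes S)(\mathcal{R})$ so that $\theta = \phi_0 u^{-1}$, the first step is to record the super-Drinfeld identities
\[\varepsilon(u)=1, \qquad uxu^{-1}=S^2(x), \qquad \Delta(u)=(\tau^s(\mathcal{R})\mathcal{R})^{-1}(u\otimes u).\]
These are proved by the same manipulations as in the classical non-super case, with the $\tau^s$ signs tracked carefully, and they extend from $\mathcal{U}^H$ to $\widehat{\mathcal{U}^H}$ by the continuity results of this section. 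I also use throughout that $\phi_0 = k_1^{-\ell}k_2^{-2}$ is grouplike (so $\varepsilon(\phi_0)=1$, $\Delta(\phi_0)=\phi_0\otimes\phi_0$, $S(\phi_0)=\phi_0^{-1}$) and that $\phi_0$ implements $S^{2}$ by conjugation (\cite[Prop.~3.3]{Ha16}); since $u$ does so as well, $\theta$ is central and $\phi_0$ commutes with $u$.

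Parts (1) and (2) are then short. For (1), $\varepsilon(\theta) = \varepsilon(\phi_0)\,\varepsilon(u)^{-1} = 1$. For (2), inverting the coproduct formula for $u$ gives $\Delta(u^{-1}) = \tau^s(\mathcal{R})\mathcal{R}(u^{-1}\otimes u^{-1})$, so
\[\Delta(\theta) = (\phi_0\otimes\phi_0)\,\tau^s(\mathcal{R})\mathcal{R}\,(u^{-1}\otimes u^{-1}).\]
Applying the quasitriangularity axiom $\mathcal{R}\Delta(\phi_0)=\Delta^{op}(\phi_0)\mathcal{R}$ to the grouplike $\phi_0$ gives $\mathcal{R}(\phi_0\otimes\phi_0)=(\phi_0\otimes\phi_0)\mathcal{R}$, and a symmetric argument (or application of $\tau^s$) gives the same for $\tau^s(\mathcal{R})$; shuttling $\phi_0\otimes\phi_0$ past $\tau^s(\mathcal{R})\mathcal{R}$ turns the right-hand side into $\tau^s(\mathcal{R})\mathcal{R}(\theta\otimes\theta)$, as required.

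Part (3) is the subtle one. From $S(\theta) = S(u^{-1})S(\phi_0) = S(u)^{-1}\phi_0^{-1}$ the identity $S(\theta)=\theta$ reduces to
\[S(u) \;=\; \phi_0^{-2}\,u \qquad \text{(equivalently, }\phi_0^{2} = u\,S(u)^{-1}\text{)},\]
which is the compatibility between the pivotal element and the Drinfeld element that promotes the central element $\phi_0 u^{-1}$ from a generic central to a genuine \emph{ribbon} element. I expect this to be the main obstacle: it is not formal and depends on the specific choice $\phi_0 = k_1^{-\ell}k_2^{-2}$. I would attack it by unfolding the explicit formula \eqref{Eq K}, computing $u$ and $S(u)$ from the factorization $\mathcal{R}=\check{\mathcal{R}}\mathcal{K}$, extracting the Cartan contributions of $\mathcal{K}$, and matching weights on each graded piece; the nilpotent pieces cancel since $uS(u)$ is central and therefore lies in the Cartan. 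A shorter alternative is to invoke \cite{Ha16}, where $\mathscr{C}^H$ is already known to be ribbon with twist implemented by $\phi_0 u^{-1}$: density of $\mathcal{U}^H$ in $\widehat{\mathcal{U}^H}$ together with the continuity of every representation $\rho_V$ then lifts the categorical equality $\rho_V(S(u)) = \rho_V(\phi_0^{-2}u)$, valid on every weight module, to the asserted algebraic equality in $\widehat{\mathcal{U}^H}$.
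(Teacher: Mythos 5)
Your treatment of (1) and (2) is fine and matches what the paper leaves implicit ("follow from the definition of $\theta$"), and your reduction of (3) to $S(u)=\phi_0^{-2}u$ is correct. The gap is in how you close (3). Your "shorter alternative" — cite \cite{Ha16} for $\rho_V(S(\theta))=\rho_V(\theta)$ on every weight module and then "lift" this to $\widehat{\mathcal{U}^H}$ by density of $\mathcal{U}^H$ and continuity of the $\rho_V$ — does not work as stated: density and continuity are irrelevant to the point at issue, which is whether the family of nilpotent weight representations \emph{jointly separates} central elements of the completion, i.e.\ whether an element of $Z(\widehat{\mathcal{U}^H})$ annihilated by every $\rho_V$ must vanish. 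This is a genuine faithfulness statement, and it is exactly what the paper spends most of its effort proving: it builds a Harish--Chandra homomorphism $\varphi:\mathcal{U}^h\to\mathscr{C}^{\omega}(h_1,h_2)$ for the completion, shows (Proposition \ref{z=phiz}) that a central $z$ acts on a highest weight vector of weight $\mu$ by the scalar $\varphi(z)(\mu)$, deduces that $\varphi(S(\theta)-\theta)$ is a holomorphic function vanishing at all weights and hence zero, and then proves (Proposition \ref{u=0}) that a central element with vanishing Harish--Chandra image is zero — via a dual-basis evaluation on typical modules that kills the non-Cartan PBW components one minimal-weight term at a time. Without some substitute for these two propositions, your categorical equality on modules simply does not transfer to an equality in $\widehat{\mathcal{U}^H}$.

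Your first route (direct computation of $uS(u)^{-1}$ from $\mathcal{R}=\check{\mathcal{R}}\mathcal{K}$) is only a sketch, and it too rests on a false step: you assert that "the nilpotent pieces cancel since $uS(u)$ is central and therefore lies in the Cartan." Central elements of $\widehat{\mathcal{U}^H}$ do \emph{not} lie in the Cartan part; by the paper's decomposition $\mathcal{U}^h=\mathscr{C}^{\omega}(h_1,h_2)\oplus\mathcal{I}$ they are a Cartan (Harish--Chandra) part plus terms in the ideal generated by the $e_i$'s, and showing that those extra terms vanish for a specific central element is precisely the non-formal content of Proposition \ref{u=0}. So either route, as written, leaves the essential step of (3) unproved; the fix is to import (or reprove) the Harish--Chandra injectivity argument on the center of the completion before appealing to the ribbon structure of $\mathscr{C}^H$.
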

Equalities (1) and (2) follow from the definition of $\theta$. To prove (3), we need the following lemmas.\\
%%%%%%%%%%%%%%%%%%%%%
Let $\mathcal{U}^{h}$ be the sub-superalgebra of $\widehat{\mathcal{U}^{H}}$ of all elements commuting with $h_1,\ h_2$ we have the lemma.
\begin{Lem}
For $u \in \widehat{\mathcal{U}^{H}},\ u \in \mathcal{U}^{h}$ if and only if $u$ has the form 
\begin{equation}
u=\sum_{0\leq\rho,\sigma \leq1,\ 0\leq p\leq \ell -1}y_{\rho,\sigma, p}Q_{\rho,\sigma, p}(h_1, h_2)e_2^{\rho}e_3^{\sigma}e_1^{p}
\end{equation}
where $\text{weight}(y_{\rho,\sigma, p})+\text{weight}(e_2^{\rho}e_3^{\sigma}e_1^{p})=0$ and $Q_{\rho,\sigma, p}(h_1, h_2) \in \mathscr{C}^{\omega}(h_1, h_2)$.
\end{Lem}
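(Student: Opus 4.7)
The proof reduces to applying the topological PBW basis lemma just established, with a slightly different ordering on the positive part. The reverse implication is immediate: each summand $y_{\rho,\sigma,p}Q_{\rho,\sigma,p}(h_1,h_2)e_2^{\rho}e_3^{\sigma}e_1^{p}$ commutes with $h_k$ because $h_k$ commutes with the holomorphic coefficient $Q_{\rho,\sigma,p}(h_1,h_2)$, while by Remark \ref{cs} the shifts $|y_{\rho,\sigma,p}|_k$ from the left and $|e_2^{\rho}e_3^{\sigma}e_1^{p}|_k$ from the right cancel exactly under the weight condition, so the two terms in $[h_k,\cdot]$ coincide.

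For the forward direction, I first observe that $\{e_2^{\rho}e_3^{\sigma}e_1^{p} : 0\le\rho,\sigma\le1,\ 0\le p\le\ell-1\}$ is again a PBW basis of the positive-Borel subalgebra: it is obtained from the original $\mathfrak{B}_{+}=\{e_1^{p}e_3^{\rho}e_2^{\sigma}\}$ by a triangular change of ordering via the Serre relation \eqref{relation serre 1} and the defining identity $e_3=e_1e_2-\xi^{-1}e_2e_1$. Invoking the PBW topological basis lemma with $\mathfrak{B}_{-}$ on the left and this reordered family on the right, any $u\in\widehat{\mathcal{U}^{H}}$ decomposes uniquely as
$$u=\sum_{p',\rho',\sigma',p,\rho,\sigma} f_1^{p'}f_3^{\rho'}f_2^{\sigma'}\,\widetilde{Q}_{p',\rho',\sigma',\rho,\sigma,p}(h_1,h_2)\,e_2^{\rho}e_3^{\sigma}e_1^{p},$$
with each $\widetilde{Q}\in\mathscr{C}^{\omega}(h_1,h_2)$. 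Applying $h_k$ on the left and right and using Remark \ref{cs} together with $[h_k,\widetilde{Q}]=0$, one finds
$$[h_k,u]=\sum \bigl(|f_1^{p'}f_3^{\rho'}f_2^{\sigma'}|_k+|e_2^{\rho}e_3^{\sigma}e_1^{p}|_k\bigr)\, f_1^{p'}f_3^{\rho'}f_2^{\sigma'}\,\widetilde{Q}(h_1,h_2)\,e_2^{\rho}e_3^{\sigma}e_1^{p}.$$
If $u\in\mathcal{U}^{h}$, then $[h_k,u]=0$ for $k=1,2$ and the uniqueness of the topological PBW decomposition forces each $\widetilde{Q}$ whose associated left--right weight sum is nonzero (in either slot) to vanish. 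Grouping the surviving terms according to the right-hand monomial $e_2^{\rho}e_3^{\sigma}e_1^{p}$ yields the claimed expression, with $y_{\rho,\sigma,p}$ the resulting weight-homogeneous combination of elements of $\mathfrak{B}_{-}$ whose weight is opposite to that of $e_2^{\rho}e_3^{\sigma}e_1^{p}$.

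The only mildly delicate point is verifying that the reordered family $\{e_2^{\rho}e_3^{\sigma}e_1^{p}\}$ is genuinely a PBW basis so that the topological PBW basis lemma applies in this ordering (guaranteeing uniqueness of the decomposition and hence the vanishing argument); beyond that, everything is a routine computation using Remark \ref{cs}.
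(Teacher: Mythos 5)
The paper states this lemma without proof, so there is no argument of the author's to compare against line by line; judged on its own, your proof is correct and is clearly the intended one: decompose $u$ by the topological PBW lemma and use uniqueness of that decomposition together with the weight bookkeeping of Remark \ref{cs} to force the vanishing of every term whose total left--right weight is nonzero. Two points deserve to be made explicit. First, the reordering step is legitimate because the straightening relations in the positive part ($e_2^2=e_3^2=0$, $e_1e_3=\xi e_3e_1$, $e_2e_3=-\xi e_3e_2$, $e_2e_1=\xi e_1e_2-\xi e_3$, all consequences of the defining relations) have constant coefficients, so $\{e_2^{\rho}e_3^{\sigma}e_1^{p}\}$ is obtained from $\mathfrak{B}_{+}$ by an invertible change of basis over $\mathbb{C}$, and uniqueness of the decomposition with the reordered right-hand monomials follows from the uniqueness asserted in the PBW lemma. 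Second, after grouping by the right-hand monomial, the part attached to a fixed $e_2^{\rho}e_3^{\sigma}e_1^{p}$ is in general a finite sum $\sum_m y^{(m)}Q^{(m)}(h_1,h_2)$ with several elements $y^{(m)}\in\mathfrak{B}_{-}$ of the same weight and distinct holomorphic coefficients; it need not be a single product $y\,Q(h_1,h_2)$ (for instance $f_3$ and $f_1f_2$ have equal weight). This matches the implicit reading of the lemma as it is used later (only the weight condition and the fact that the positive tails annihilate highest weight vectors matter), so it is a matter of phrasing rather than a gap, but your closing sentence about $y_{\rho,\sigma,p}$ being a ``combination of elements of $\mathfrak{B}_{-}$'' should allow the combination to have coefficients in $\mathscr{C}^{\omega}(h_1,h_2)$ rather than in $\mathbb{C}$.
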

%%%
Let $\mathcal{I}^{+}$ be a left ideal of $\widehat{\mathcal{U}^{H}}$ generated by $e_1,\ e_2$ and $e_3$, set $\mathcal{I}= \mathcal{I}^{+} \cap \mathcal{U}^{h}$.
\begin{Lem}
We have $\mathcal{I}= \mathcal{I}^{+} \cap\ \mathcal{U}^{h}=\mathcal{I}^{-} \cap\ \mathcal{U}^{h}$ and $\mathcal{U}^{h}=\mathscr{C}^{\omega}(h_1,h_2)\bigoplus \mathcal{I}$ where $\mathcal{I}^{-}$ is right ideal generated by $f_1,\ f_2$ and $f_3$.
\end{Lem}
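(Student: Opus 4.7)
The plan is to use the preceding lemma's PBW-topological description of $\mathcal{U}^h$ to isolate a canonical "$e$-free" part of each $u \in \mathcal{U}^h$, lying in $\mathscr{C}^\omega(h_1,h_2)$, and then to exhibit the remainder as an element of both $\mathcal{I}^+$ and $\mathcal{I}^-$. Everything hinges on making the weight bookkeeping explicit. Writing $y_{\rho,\sigma,p} = f_1^{p'} f_3^{\rho'} f_2^{\sigma'} \in \mathfrak{B}_-$, and using the weights $(-2,1),(-1,1),(1,0)$ of $f_1,f_3,f_2$ and $(2,-1),(1,-1),(-1,0)$ of $e_1,e_3,e_2$ (read from $[h_i,e_j]=a_{ij}e_j$), the weight-zero condition of the previous lemma becomes $p'+\rho'=p+\sigma$ and $\sigma'=p'+\rho-p$. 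For $(\rho,\sigma,p)=(0,0,0)$ the unique solution is $y_{0,0,0}=1$; for every $(\rho,\sigma,p)\neq(0,0,0)$ at least one of $p',\rho',\sigma'$ is forced to be nonzero, so $y_{\rho,\sigma,p}$ necessarily begins with some $f_i$.

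For the direct sum decomposition, I would split the PBW expansion as
$$u = Q_{0,0,0}(h_1,h_2) + \sum_{(\rho,\sigma,p)\neq(0,0,0)} y_{\rho,\sigma,p}\, Q_{\rho,\sigma,p}(h_1,h_2)\, e_2^\rho e_3^\sigma e_1^p.$$
Each term of the second summand ends in $e_1$, $e_2$, or $e_3$, so this summand lies in $\mathcal{I}^+$; since both $u$ and $Q_{0,0,0}$ lie in $\mathcal{U}^h$, it lies in $\mathcal{I}$. This yields $\mathcal{U}^h = \mathscr{C}^\omega(h_1,h_2) + \mathcal{I}$. Uniqueness of the sum reduces to showing $\mathscr{C}^\omega(h_1,h_2) \cap \mathcal{I}^+ = 0$: any $Q \in \mathscr{C}^\omega(h_1,h_2)$ has trivial PBW expansion (only the $(y_{0,0,0},e_2^0 e_3^0 e_1^0)=(1,1)$ component is nonzero), whereas any element of $\mathcal{I}^+$ has zero coefficient on the $(1,1)$-component when put in PBW form, since left-multiplying an arbitrary element by some $e_i$ and rewriting in the PBW topological basis produces only terms with a nontrivial rightmost $e$-factor. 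The unique representation statement of the PBW lemma then forces $Q=0$.

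For the equality $\mathcal{I}^+\cap\mathcal{U}^h = \mathcal{I}^-\cap\mathcal{U}^h$: if $u \in \mathcal{I}^+\cap\mathcal{U}^h$, then every $y_{\rho,\sigma,p}$ appearing in the expansion above begins with some $f_i$, hence $y_{\rho,\sigma,p} \in \mathcal{I}^-$; multiplying it on the right by $Q_{\rho,\sigma,p}(h_1,h_2)\, e_2^\rho e_3^\sigma e_1^p$ leaves us inside the right ideal $\mathcal{I}^-$, so $u \in \mathcal{I}^-$. The opposite inclusion follows by the symmetric argument using the alternative PBW form $\mathfrak{B}_+\cdot\mathscr{C}^\omega\cdot\mathfrak{B}_-$ with $f$'s on the right, where a dual weight computation forces every nontrivial $e$-prefix to be nonempty. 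The most delicate step is the uniqueness of the PBW projection in the completion $\widehat{\mathcal{U}^H}$: one appeals to the uniqueness statement of the PBW topological basis lemma, and it is cleanest to take $\mathcal{I}^\pm$ to be the topologically closed right/left ideals generated by the $e_i$'s, resp.\ $f_i$'s, so that the finite-PBW argument extends from $\mathcal{U}^H$ to $\widehat{\mathcal{U}^H}$ by continuity.
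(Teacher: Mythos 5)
Your weight bookkeeping, the splitting $u=Q_{0,0,0}(h_1,h_2)+\sum_{(\rho,\sigma,p)\neq(0,0,0)}y_{\rho,\sigma,p}Q_{\rho,\sigma,p}(h_1,h_2)e_2^{\rho}e_3^{\sigma}e_1^{p}$, and the directness argument are sound: the weight-zero condition does force $y_{0,0,0}=1$ and forces every other $y_{\rho,\sigma,p}$ to be a nontrivial $f$-monomial, each nontrivial term ends in $e_1$, $e_3$ or $e_2$ (hence lies in the left ideal $\mathcal{I}^{+}$) and begins with an $f$ (hence lies in the right ideal $\mathcal{I}^{-}$), and for $a\in\widehat{\mathcal{U}^{H}}$ the product $ae_i$ has vanishing $(1,1)$-component because rewriting $ve_i$, $v\in\mathfrak{B}_{+}$, uses only relations internal to the positive subalgebra with scalar coefficients — a point worth saying explicitly, since it is the crux of $\mathscr{C}^{\omega}(h_1,h_2)\cap\mathcal{I}^{+}=0$. (The paper states this lemma without proof, so I can only judge your argument on its own terms.) Two small expository gaps: in the inclusion $\mathcal{I}^{+}\cap\mathcal{U}^{h}\subseteq\mathcal{I}^{-}$ you need to say that $u\in\mathcal{I}^{+}$ kills the coefficient $Q_{0,0,0}$ (this follows from your $(1,1)$-component claim, but "every $y_{\rho,\sigma,p}$ appearing begins with some $f_i$" is not automatic without it); and passing to topologically closed ideals is unnecessary — the paper's $\mathcal{I}^{\pm}$ are the algebraic ideals, elements are finite sums $\sum a_je_{i_j}$, resp.\ $\sum f_{i_j}a_j$, and the finite PBW argument applies directly.

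The genuine problem is the last step, the inclusion $\mathcal{I}^{-}\cap\mathcal{U}^{h}\subseteq\mathcal{I}^{+}\cap\mathcal{U}^{h}$ "by the symmetric argument using the alternative PBW form $\mathfrak{B}_{+}\cdot\mathscr{C}^{\omega}\cdot\mathfrak{B}_{-}$". This does not work as described, and your parenthetical swap of sides ("right/left ideals generated by the $e_i$'s, resp.\ $f_i$'s") suggests the source of the confusion: with the paper's conventions, $\mathcal{I}^{+}$ consists of elements \emph{ending} in an $e$-generator and $\mathcal{I}^{-}$ of elements \emph{beginning} with an $f$-generator, so the alternative ordering is adapted to the opposite pair of ideals. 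Concretely, membership in $\mathcal{I}^{-}$ does not annihilate the Cartan term of the alternative-ordering expansion: $f_1e_1\in\mathcal{I}^{-}\cap\mathcal{U}^{h}$, yet $f_1e_1=e_1f_1-\frac{k_1-k_1^{-1}}{\xi-\xi^{-1}}$ has a nonzero $(1,1)$-component in the $e$-first ordering; moreover a term $e$-monomial$\,\cdot Q\cdot f$-monomial is not termwise in $\mathcal{I}^{+}$, since pushing the $f$'s to the left past the $e$'s again creates Cartan corrections. The fix is to stay in the original ordering $\mathfrak{B}_{-}\cdot\mathscr{C}^{\omega}\cdot\mathfrak{B}_{+}$ and prove the genuinely dual statement: for any $a$, the element $f_ia$ has vanishing $(1,1)$-component, because left-multiplying the PBW expansion of $a$ by $f_i$ only requires rewriting $f_iu$, $u\in\mathfrak{B}_{-}$, inside the negative subalgebra (scalar coefficients, nonzero weight, hence only nontrivial monomials of $\mathfrak{B}_{-}$). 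Then for $u\in\mathcal{I}^{-}\cap\mathcal{U}^{h}$ one gets $Q_{0,0,0}=0$, and the surviving terms all end in an $e$-generator, so $u\in\mathcal{I}^{+}$. With this replacement your proof is complete.
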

%\begin{proof}
%Let $u=\sum_{\rho,\sigma, p}f_2^{\rho}f_3^{\sigma}f_1^{p}P_{\rho,\sigma, p}(h_1, h_2)e_2^{\rho}e_3^{\sigma}e_1^{p}$ in $\mathcal{U}^{K}$. If $u \in \widehat{\mathcal{U}^{H}}e_2^{\rho^{'}}e_3^{\sigma^{'}}e_1^{p^{'}}$ where $(\rho^{'}, \sigma^{'}, p^{'})\neq (0,0,0)$, it implies $Q_{(0,0,0)}(h_1, h_2)=0$, i.e. $u \in f_2^{\rho^{'}}f_3^{\sigma^{'}}f_1^{p^{'}} \widehat{\mathcal{U}^{H}}$.
%\end{proof}
Hence, $\mathcal{I}$ is a two-side ideal and the projection $\varphi:\ \mathcal{U}^{h} \rightarrow \mathscr{C}^{\omega}(h_1,h_2)$ is a homomorphism of algebra called the Harish-Chandra homomorphism.
\begin{Pro} \label{z=phiz}
Let $V_{\mu}$ be a simple highest weight $\mathcal{U}^{H}$-module with highest weight $\mu=(\mu_1, \mu_2)$. Then for any $z \in Z(\widehat{\mathcal{U}^{H}})$ and $v$ the highest weight vector of weight $\mu$ of $V_{\mu}$  $$ zv=\varphi(z)(\mu)v$$ where $\varphi(z)$ is in $\mathscr{C}^{\omega}(\mathfrak{H}^{*})$ and $\varphi(z)(\mu)$ is its value at $\mu=(\mu_1, \mu_2)$.
\end{Pro}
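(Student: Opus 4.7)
The plan is to mimic the classical argument that central elements act by scalars on highest weight modules, using the Harish-Chandra homomorphism $\varphi$ built from the decomposition $\mathcal{U}^{h}=\mathscr{C}^{\omega}(h_1,h_2)\oplus \mathcal{I}$. First I would observe that $Z(\widehat{\mathcal{U}^{H}})\subset \mathcal{U}^{h}$: any element of the center commutes with $h_1$ and $h_2$, so it lies in the commutant $\mathcal{U}^{h}$ by definition. Hence for $z\in Z(\widehat{\mathcal{U}^{H}})$ we may uniquely decompose
\begin{equation*}
z=\varphi(z)+i,\qquad \varphi(z)\in \mathscr{C}^{\omega}(h_1,h_2),\ i\in \mathcal{I}.
\end{equation*}

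Next I would show that $i$ annihilates $v$. Since $\mathcal{I}=\mathcal{I}^{+}\cap\mathcal{U}^{h}$, the element $i$ lies in the left ideal $\mathcal{I}^{+}$ generated by $e_1,e_2,e_3$. The highest weight assumption gives $e_1 v=e_2 v=0$, and from the definition $e_3=e_1e_2-\xi^{-1}e_2e_1$ we also get $e_3 v=0$. Because $\mathcal{I}^{+}$ is a left ideal, this forces $iv=0$, and therefore $zv=\varphi(z)\cdot v$ inside $V_{\mu}$.

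Finally, it remains to interpret the action of the holomorphic element $\varphi(z)\in \mathscr{C}^{\omega}(h_1,h_2)\subset \widehat{\mathcal{U}^{H}}$ on $v$. For a polynomial $P(h_1,h_2)$ this is immediate: $P(h_1,h_2)v=P(\mu_1,\mu_2)v$ since $h_iv=\mu_iv$. To extend this to a general holomorphic function I would invoke the continuous extension of the representation established in the preceding proposition: the map $\widehat{\rho_{V_{\mu}}}\colon \widehat{\mathcal{U}^{H}}\to \End_{\mathbb{C}}(V_{\mu})$ is continuous for the topology of uniform convergence on compact sets, and polynomials are dense in $\mathscr{C}^{\omega}(h_1,h_2)$, so the identity $P(h_1,h_2)v=P(\mu)v$ passes to the limit giving $\varphi(z)v=\varphi(z)(\mu)v$.

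The genuinely delicate step is the last one: one must check that the compact-open topology is indeed the right one to make the evaluation at $\mu$ a continuous linear functional on $\mathscr{C}^{\omega}(h_1,h_2)$, so that the density of polynomials actually implies convergence of $P_n(h_1,h_2)v\to \varphi(z)(\mu)v$. This is true because point evaluation is continuous for uniform convergence on any compact set containing $\mu$, and $\widehat{\rho_{V_\mu}}$ restricted to the one-dimensional line $\mathbb{C}v$ factors through such an evaluation. All the other steps are formal consequences of the earlier lemmas describing $\mathcal{U}^{h}$ and the ideal $\mathcal{I}$.
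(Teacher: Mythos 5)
Your proof is correct and follows essentially the same route as the paper: write $z=\varphi(z)+i$ using $Z(\widehat{\mathcal{U}^{H}})\subset\mathcal{U}^{h}$ and the decomposition $\mathcal{U}^{h}=\mathscr{C}^{\omega}(h_1,h_2)\oplus\mathcal{I}$, note that $i\in\mathcal{I}^{+}$ annihilates the highest weight vector since $e_1v=e_2v=e_3v=0$, and evaluate the Cartan part at $\mu$. The only differences are minor: you spell out the analytic justification of $\varphi(z)v=\varphi(z)(\mu)v$ via density of polynomials and continuity of $\widehat{\rho_{V_\mu}}$, which the paper asserts directly from $h_iv=\mu_iv$, while the paper additionally observes that the scalar action extends to every vector of $V_\mu$ (not needed for the statement as phrased).
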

\begin{proof}
Let $v_{0}$ be a highest weight vector generating $V_{\mu}$ and $z$ a central element of $\widehat{\mathcal{U}^{H}}$. Following the lemmas above, $z$ can be written $$z=\varphi(z)+ \sum_{(\rho,\sigma, p) \neq (0,0,0)}y_{\rho,\sigma, p}Q_{\rho,\sigma, p}(h_1, h_2)e_2^{\rho}e_3^{\sigma}e_1^{p}.$$
Since $e_2^{\rho}e_3^{\sigma}e_1^{p}v_{0}=0$ for $(\rho,\sigma, p) \neq (0,0,0)$ and $h_iv_0=\mu_iv_0 \ i=1,2$, we get $zv_0=\varphi(z)(\mu_1, \mu_2)v_0$. If $v$ is an arbitrary vector of $V$, we have $v=xv_0$ for some $x$ in $\widehat{\mathcal{U}^{H}}$. It implies that $zv=zxv_0=xzv_0=\varphi(z)(\mu_1, \mu_2)xv_0=\varphi(z)(\mu_1, \mu_2)v$.
\end{proof}
By using this proposition, we have
\begin{Pro}\label{u=0}
Let $u$ be a central element of $\widehat{\mathcal{U}^{H}}$. If $\varphi(u)=0$ then $u=0$ where $\varphi$ is Harish-Chandra homomorphism. 
\end{Pro}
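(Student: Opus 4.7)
The plan is to combine Proposition \ref{z=phiz} with the PBW topological basis of $\widehat{\mathcal{U}^{H}}$, then propagate vanishing via centrality.

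First, by Proposition \ref{z=phiz}, the hypothesis $\varphi(u)=0$ forces $uv=\varphi(u)(\mu)v=0$ for every vector $v$ of every simple highest weight $\mathcal{U}^{H}$-module $V_{\mu}$. In particular, for generic $\mu\in\mathbb{C}^{2}$ the Verma-like $\mathcal{V}_{\mu}$ is a simple $4\ell$-dimensional module with basis $\{u_{k}v_{\mu}\}_{u_{k}\in\mathfrak{B}_{-}}$, and $u$ acts as zero on each such $\mathcal{V}_{\mu}$.

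Next I would use the PBW decomposition $u=\sum_{i,j}u_{i}Q_{ij}(h_{1},h_{2})v_{j}$. Centrality of $u$ places $u$ in $\mathcal{U}^{h}$, so every term is weight-balanced, i.e.\ $\mathrm{wt}(u_{i})+\mathrm{wt}(v_{j})=0$; in particular the only balanced term with $v_{j}=1$ is the one with $u_{i}=1$, whose coefficient is exactly $\varphi(u)=Q_{1,1}=0$. Evaluating $uv_{\mu}$ using this expansion confirms $\varphi(u)(\mu)v_{\mu}=0$ by linear independence of $\{u_{k}v_{\mu}\}$ and holomorphy of the $Q_{ij}$, but it yields no new information since all $v_{j}\neq 1$ terms annihilate the highest weight vector.

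To force the remaining coefficients $Q_{ij}$ with $v_{j}\neq 1$ to vanish I would exploit the full centrality of $u$, namely the identities $[u,e_{k}]=0$ and $[u,f_{k}]=0$ for $k=1,2$. Expanding these commutators in the PBW basis, using $e_{k}\cdot Q(h_{1},h_{2})=Q(h_{1}-a_{1k},h_{2}-a_{2k})\cdot e_{k}$ and the super-commutators $[e_{k},u_{i}]$, one obtains, after reducing to PBW normal form and equating coefficients on each $u_{i}v_{j}$, a recursive system of identities among the holomorphic functions $Q_{ij}(h_{1},h_{2})$. Ordering the basis $\mathfrak{B}_{+}$ by degree of $v_{j}$ and performing a descending induction starting from the maximal such $v_{j}$ with $Q_{ij}\neq 0$, the leading relation in the PBW expansion of $[u,e_{k}]=0$ forces that top coefficient to vanish, whence by iteration all $Q_{ij}\equiv 0$ and $u=0$.

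The main obstacle is managing this PBW-level induction: the super-signs coming from the odd generators $e_{2},e_{3},f_{2},f_{3}$, the shifts $h_{i}\mapsto h_{i}-a_{ik}$ introduced when commuting $e_{k}$ past the holomorphic coefficients $Q_{ij}(h_{1},h_{2})$, and the nilpotency $e_{1}^{\ell}=0$ must all be tracked carefully so that the recursive identities can be solved triangularly. Once this is done, the conclusion $u=0$ is immediate.
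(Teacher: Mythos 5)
Your first step is sound and matches the paper's starting point: Proposition \ref{z=phiz} together with centrality shows that $u$ acts by the scalar $\varphi(u)(\mu)=0$ on every typical highest weight module. But from there you discard the module action (``it yields no new information'') and replace it by a purely algebraic descending induction on the PBW expansion using $[u,e_k]=[u,f_k]=0$, and this is where the gap lies. The key step --- ``the leading relation in the PBW expansion of $[u,e_k]=0$ forces that top coefficient to vanish'' --- is asserted, not proved, and as stated it fails at the very first stage: if the maximal $v_j$ occurring in $u$ is, say, the top monomial $e_1^{\ell-1}e_3e_2$, then both $e_kv_j$ and $v_je_k$ are zero in the plus part because of the nilpotency relations $e_1^{\ell}=e_2^{2}=e_3^{2}=0$, so the top-degree component of $[u,e_k]$ vanishes identically and imposes no condition at all on the corresponding coefficients $Q_{ij}$. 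The system you would obtain is therefore not triangular in the naive sense: constraints on the top coefficients only appear in lower-degree components, mixed with Cartan factors produced by commuting $e_k$ through the $f$-part, and whether the recursion closes depends essentially on the fact that $k_i^{\ell}=\xi^{\ell h_i}\neq 1$ in $\widehat{\mathcal{U}^{H}}$. This is not a cosmetic issue: in the small quantum group, where $k_i^{\ell}=1$, the analogous statement is false (already for restricted quantum $\mathfrak{sl}_2$ there are nonzero nilpotent central elements, and these have vanishing Harish-Chandra image since they act by a nilpotent, hence zero, scalar on each Verma-type module), so no argument using only the $e,f,k$-relations and weight bookkeeping can succeed; your sketch never says where the unrolled generators $h_1,h_2$ and the holomorphy of the $Q_{ij}$ enter in a decisive way.

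By contrast, the paper stays with the module action but evaluates $u$ on vectors other than the highest weight vector: in a typical module $V_\mu$ it constructs vectors $\widetilde{w}_{\rho,\sigma,p}$ dual, via $w_{0,0,0}^{*}$, to the monomials $e_2^{\rho}e_3^{\sigma}e_1^{p}$, chooses a term of minimal weight in $u$, and shows that $u\widetilde{w}_{\rho_0,\sigma_0,p_0}=0$ extracts precisely the value $Q_{\rho_0,\sigma_0,p_0}(\mu)$; since this holds for all typical $\mu$ and $Q_{\rho_0,\sigma_0,p_0}$ is holomorphic, that coefficient vanishes identically, contradicting its choice. To repair your proposal you would either have to carry out the commutator recursion in full, making explicit where $\xi^{\ell h_i}\neq 1$ is used to break the degeneracy caused by nilpotency, or, more simply, use the action on non-highest-weight vectors as the paper does.
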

\begin{proof}
Let $u$ be a central element of $\widehat{\mathcal{U}^{H}}$ such that $\varphi(u)=0$. Assume $u$ is non-zero can be written as $$u=\sum_{(\rho,\sigma, p) \neq (0,0,0)}y_{\rho,\sigma, p}Q_{\rho,\sigma, p}(h_1, h_2)e_2^{\rho}e_3^{\sigma}e_1^{p}$$ where $Q_{\rho,\sigma, p}(h_1, h_2)$ are non-zero functions in $\mathscr{C}^{\omega}(h_1,h_2)$,   $0\leq \rho, \sigma \leq 1,\ 0\leq p \leq \ell -1$ and $(\rho,\sigma, p) \neq (0,0,0)$.\\
Consider a typical highest weight $\mathcal{U}^{H}$-module $V_\mu$ generated by highest weight vector $w_0$. It is known that the set of $4r$ vectors $B^{*}=\{S^{-1}(e_2^{\rho}e_3^{\sigma}e_1^{p})w_{0,0,0}^{*}\}$ forms a basis of $V_{\mu}^{*}$ where $0\leq \rho, \sigma \leq 1,\ 0\leq p \leq \ell -1, \ \{w_{\rho, \sigma,p}^{*}\}$ is the dual basis of $\{w_{\rho, \sigma,p}\}$ of $V_\mu$. In fact, the elements $S^{-1}(e_2^{\rho}e_3^{\sigma}e_1^{p})$ form up to multiplication by $k_1^{a}k_2^{b} \ a,b \in \mathbb{Z}$ a basis of the subalgebra $\mathcal{U}^{+}$ of $\mathcal{U}^{H}$ generated by $e_2^{\rho}e_3^{\sigma}e_1^{p} \ 0\leq \rho, \sigma \leq 1,\ 0\leq p \leq \ell -1$. Since $\mathcal{U}^{-}w_{0,0,0}^{*}=\mathbb{C}w_{0,0,0}^{*}$ where $\mathcal{U}^{-}$ is subalgebra of $\widehat{\mathcal{U}^{H}}$ generated by $f_2^{\rho}f_3^{\sigma}f_1^{p} \ 0\leq \rho, \sigma \leq 1,\ 0\leq p \leq \ell -1$, we have $\Vect(B^{*})\simeq \mathcal{U}^{+}w_{0,0,0}^{*}\simeq \mathcal{U}^{+}\mathcal{U}^{0}\mathcal{U}^{-}w_{0,0,0}^{*} \simeq \mathcal{U}^{H}w_{0,0,0}^{*}\simeq V_{\mu}^{*}$ where $\mathcal{U}^{0}$ is subalgebra of $\widehat{\mathcal{U}^{H}}$ topologically generated by $h_1, h_2$. Furthermore $\text{card}(B^{*})=\dim{V_{\mu}^{*}}$, hence $B^{*}$ is a basis of $V_{\mu}^{*}$. It exists in $V_\mu$ a dual basis $B=\{\widetilde{w}_{\rho,\sigma, p}\ 0\leq \rho, \sigma \leq 1,\ 0\leq p \leq \ell -1\}$ of $B^{*}$ in $V_{\mu}^{*}$, i.e. given $(\rho,\sigma, p)$, for any $e_2^{\rho^{'}}e_3^{\sigma^{'}}e_1^{p^{'}},\ w_{0,0,0}^{*}(e_2^{\rho^{'}}e_3^{\sigma^{'}}e_1^{p^{'}}\widetilde{w}_{\rho,\sigma, p})=\delta_{\rho}^{\rho^{'}}\delta_{\sigma}^{\sigma^{'}}\delta_{p}^{p^{'}}$.\\
%%%
On the one hand, Proposition \ref{z=phiz} implies that $u\widetilde{w}_{\rho,\sigma, p}=0$ for all $0\leq \rho, \sigma \leq 1,\ 0\leq p \leq \ell -1$. On the other, we have that $e_2^{\rho_0}e_3^{\sigma_0}e_1^{p_0}$ is an element having minimal weight of ones in the items of sum $\sum_{(\rho,\sigma, p) \neq (0,0,0)}y_{\rho,\sigma, p}Q_{\rho,\sigma, p}(h_1, h_2)e_2^{\rho}e_3^{\sigma}e_1^{p}$ such that $Q_{\rho_0,\sigma_0, p_0}(h_1, h_2) \neq 0$. It is clear that $e_2^{\rho}e_3^{\sigma}e_1^{p}\widetilde{w}_{\rho_0,\sigma_0, p_0}=0$ for $e_2^{\rho}e_3^{\sigma}e_1^{p}$ having the weight higher than one of $e_2^{\rho_0}e_3^{\sigma_0}e_1^{p_0}$ and $e_2^{\rho}e_3^{\sigma}e_1^{p}\widetilde{w}_{\rho_0,\sigma_0, p_0}=\delta_{\rho}^{\rho_0}\delta_{\sigma}^{\sigma_0}\delta_{p}^{p_0}w_{0,0,0}$ for $e_2^{\rho}e_3^{\sigma}e_1^{p}$ having the weight equal one of $e_2^{\rho_0}e_3^{\sigma_0}e_1^{p_0}$. Hence we have
\begin{align*}
&\sum_{(\rho,\sigma, p) \neq (0,0,0)}y_{\rho,\sigma, p}Q_{\rho,\sigma, p}(h_1, h_2)e_2^{\rho}e_3^{\sigma}e_1^{p}\widetilde{w}_{\rho_0,\sigma_0, p_0}\\
&=\sum_{\text{weight}(e_2^{\rho}e_3^{\sigma}e_1^{p})=\text{weight}(e_2^{\rho_0}e_3^{\sigma_0}e_1^{p_0})}y_{\rho,\sigma, p}Q_{\rho,\sigma, p}(h_1, h_2)\delta_{\rho}^{\rho_0}\delta_{\sigma}^{\sigma_0}\delta_{p}^{p_0}w_{0,0,0}\\
&=y_{\rho_0,\sigma_0, p_0}Q_{\rho_0,\sigma_0, p_0}(h_1, h_2)w_{0,0,0}\\
&=Q_{\rho_0,\sigma_0, p_0}(\mu)w_{\rho_0,\sigma_0, p_0}=0.
\end{align*} 
This result prove that $Q_{\rho_0,\sigma_0, p_0}(h_1, h_2)=0$. Thus $u=0$.
\end{proof}
\begin{Lem}
Let $\rho_{V}:\ \mathcal{U}^{H} \rightarrow \End(V)$ be a nilpotent finite dimensional representation of $\mathcal{U}^{H}$. We have
$$\rho_{V}(S(\theta))=\rho_{V}(\theta).$$
\end{Lem}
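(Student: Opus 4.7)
My plan is to reduce the lemma to the stronger statement $\theta = S(\theta)$ in $\widehat{\mathcal{U}^H}$, which then implies the representation-level identity for every module, and to establish this stronger statement through Propositions \ref{z=phiz} and \ref{u=0}. First I observe that $S(\theta)$ is central: since $\theta$ is even and central, for every $x \in \widehat{\mathcal{U}^H}$ the relation $\theta x = x\theta$ yields $S(x)S(\theta) = S(\theta)S(x)$, and the bijectivity of $S$ then shows $S(\theta)$ commutes with every element. Hence $\theta - S(\theta) \in Z(\widehat{\mathcal{U}^H})$, and by Proposition \ref{u=0} it suffices to prove $\varphi(\theta) = \varphi(S(\theta))$ in $\mathscr{C}^{\omega}(h_1,h_2)$.

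By Proposition \ref{z=phiz}, this amounts to showing the two scalars $\varphi(\theta)(\mu)$ and $\varphi(S(\theta))(\mu)$ coincide for every typical highest weight $\mu$. Using $\theta = \phi_0 \cdot u^{-1}$ with $u = m\circ \tau^s \circ (\Id\otimes S)(\mathcal{R})$, the action of $u$ on a highest weight vector $v_\mu$ collapses dramatically: in the PBW expansion $\check{\mathcal{R}} = \sum c_{i\rho\delta}(e_1^i e_3^\rho e_2^\delta)\otimes(f_1^i f_3^\rho f_2^\delta)$, after applying $\Id\otimes S$, $\tau^s$ and $m$ the factor $e_1^i e_3^\rho e_2^\delta$ ends up acting on $v_\mu$ from the right, so only the term $(i,\rho,\delta)=(0,0,0)$ survives. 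Thus only the Cartan piece $\mathcal{K} = \xi^{-h_1\otimes h_2 - h_2\otimes h_1 - 2h_2\otimes h_2}$ contributes, giving $uv_\mu = \xi^{2\mu_1\mu_2 + 2\mu_2^2} v_\mu$ and hence $\varphi(\theta)(\mu) = \xi^{-\ell\mu_1 - 2\mu_2 - 2\mu_1\mu_2 - 2\mu_2^2}$. To compute $\varphi(S(\theta))(\mu)$, I use the identity $(S\otimes S)(\mathcal{R}) = \mathcal{R}$ to simplify $S(u) = \sum S(a)S^2(b) = m\circ (\Id\otimes S)(\mathcal{R})$, i.e.\ $S(u) = \sum a\,S(b)$. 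Acting on the lowest weight vector $v_{\text{low}}$ of $V_\mu$ (also an eigenvector of any central element with eigenvalue $\varphi(S(\theta))(\mu)$), the factor $S(f_1^i f_3^\rho f_2^\delta)$ annihilates $v_{\text{low}}$ for $(i,\rho,\delta)\neq (0,0,0)$ since $S(f_j)v_{\text{low}} = -f_j k_j^{-1}v_{\text{low}} = 0$, again leaving only the Cartan part. A parallel computation then yields $\varphi(S(\theta))(\mu)$ as an analogous monomial in the lowest weight.

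The main obstacle is matching the two scalars, since they are expressed in terms of different weights: one must use the explicit shift between highest and lowest weights in a typical $\mathcal{U}^H\mathfrak{sl}(2|1)$-module to verify equality. A cleaner alternative is to invoke the standard ribbon Hopf algebra identity $S(u) = u\,g^{-2}$ with $g=\phi_0$ the pivotal element (whose super/topological version can be verified from the $R$-matrix axioms exactly as in Kassel VIII.4): this gives $S(\theta) = S(u)^{-1}\phi_0^{-1} = \phi_0^{2}u^{-1}\phi_0^{-1} = \phi_0 u^{-1} = \theta$ after using centrality of $\theta$ and that $\phi_0$ is grouplike, bypassing any module computation altogether. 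Either route produces $\theta - S(\theta) = 0$ in $\widehat{\mathcal{U}^H}$, from which $\rho_V(S(\theta)) = \rho_V(\theta)$ is immediate.
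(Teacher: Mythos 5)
Your proposal inverts the paper's logic: here the lemma is the \emph{input} from which $S(\theta)=\theta$ in $\widehat{\mathcal{U}^{H}}$ is later deduced (that is the content of Proposition \ref{steta}(3), proved via Propositions \ref{z=phiz} and \ref{u=0}), whereas you try to prove the algebra identity first and specialize. That direction is legitimate in principle, but neither of your two routes actually closes it. In route 1 the decisive step --- showing that the eigenvalue of $S(\theta)$ computed on the lowest weight vector equals the eigenvalue $\xi^{-\ell\mu_1-2\mu_2-2\mu_1\mu_2-2\mu_2^2}$ of $\theta$ on the highest weight vector --- is exactly what you flag as ``the main obstacle'' and then leave undone. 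It does work out (with lowest weight $(\mu_1-2\ell+2,\ \mu_2+\ell)$ the two exponents differ by an element of $\ell\mathbb{Z}$), but the verification is not routine bookkeeping: terms such as $\xi^{\ell\mu_i}$ do \emph{not} vanish since $\mu_i\notin\mathbb{Z}$, and one must check that all such contributions cancel; without this computation the argument proves nothing. Route 2 is circular: since $\theta=\phi_0u^{-1}$ is central, $u$ and $\phi_0$ commute, and then $S(u)=u\phi_0^{-2}$ is \emph{equivalent} to $S(\theta)=\theta$. This identity cannot be quoted ``from the $R$-matrix axioms as in Kassel VIII.4'': in the general theory it is derived from the ribbon axiom $S(v)=v$ (the very thing to be proved), and it does not follow from quasitriangularity plus pivotality alone, because $uS(u)^{-1}$ and $\phi_0^{2}$ are both grouplikes implementing $S^4$ but may differ by a central grouplike --- and $\widehat{\mathcal{U}^{H}}$ has nontrivial central grouplikes ($k_1^{\ell},\ k_2^{\ell}$), so the specific choice $\phi_0=k_1^{-\ell}k_2^{-2}$ matters and must be checked.

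For comparison, the paper's proof is short and purely categorical: it invokes the result of \cite{Ha16} that $\mathscr{C}^{H}$ is a ribbon category whose twist is $\theta_V=\rho_V(\theta)$, uses the ribbon identity $(\theta_V)^{*}=\theta_{V^{*}}$, and observes that $(\theta_V)^{*}$ has matrix $(\rho_V(\theta))^{t}$ while $\theta_{V^{*}}=\rho_{V^{*}}(\theta)$ has matrix $(\rho_V(S(\theta)))^{t}$, giving $\rho_V(S(\theta))=\rho_V(\theta)$ directly, with no Harish--Chandra argument at the level of this lemma. If you want to keep your strategy, you must either carry out the highest/lowest weight comparison in full (including the $\xi^{\ell\mu_i}$ cancellations), or give an honest proof of $uS(u)^{-1}=\phi_0^{2}$ specific to $\widehat{\mathcal{U}^{H}}$; as written, the proposal has a genuine gap.
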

\begin{proof}
Recall that the category $\mathscr{C}^{H}$ of nilpotent representations of $\mathcal{U}_{\xi}^{H}\mathfrak{sl}(2|1)$ is a ribbon category having the twist is the family of isomorphisms $\theta_{V}:\ V \rightarrow V,\ \forall V \in \mathscr{C}^{H}, \ \theta_{V}=\rho_{V}(\theta)$ where $\rho_{V}:\ \mathcal{U}^{H} \rightarrow \End(V)$ is a representation of $\mathcal{U}^{H}$ (see \cite{Ha16}). It follows that $(\theta_{V})^{*}=\theta_{V^{*}}\ \forall V \in \mathscr{C}^{H}$. In fact $(\theta_{V})^{*}=(\rho_{V}(\theta))^{*}=(\ev_{V} \otimes \Id_{V^{*}})(\Id_{V^{*}} \otimes \theta_{V} \otimes \Id_{V^{*}})(\Id_{V^{*}} \otimes \coev_{V}):\ V^{*} \rightarrow V^{*}$ has matrix $(\rho_{V}(\theta))^t$ where $(\rho_{V}(\theta))$ is the matrix of the endomorphism $\rho_{V}(\theta)$. Furthermore $\theta_{V^{*}}=\rho_{V^{*}}(\theta)$ has matrix $(\rho_{V}(S(\theta)))^t$, so we have  
\begin{equation} \label{ptsteta}
\rho_{V}(\theta)=\rho_{V}(S(\theta)).
\end{equation}
\end{proof}
\begin{proof}[Proof of Proposition \ref{steta}]
Set $z=S(\theta)-\theta$, $z$ is in the center of $\widehat{\mathcal{U}^{H}}$. Let a weight module $V_{\mu}$ in $\mathscr{C}^{H}$ of weight $\mu$ and $v$ is a weight vector of $V_{\mu}$.
By Proposition \ref{z=phiz} and Equality \eqref{ptsteta} we have $\varphi(z)(\mu)v=zv=0$. It implies that $\varphi(z)(\mu)=0$. Furthermore $\varphi(z)\in \mathscr{C}^{\omega}(\mathfrak{H}^{*})$, this deduces that $\varphi(z)=0$, so $z=0$ by Proposition \ref{u=0}, i.e. $S(\theta)=\theta$. 
\end{proof}
Hence the results above give us the theorem.
\begin{The}\label{Main theorem 1}
The completion $\widehat{\mathcal{U}^{H}}$ of $\mathcal{U}^{H}$ is a topological ribbon superalgebra. 
\end{The}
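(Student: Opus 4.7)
The plan is to observe that the statement is a synthesis of the structural results established in the preceding pages, so the proof amounts to packaging them into the axioms of a ribbon Hopf superalgebra (topological variant). Specifically, I need: (a) a topological Hopf superalgebra structure on $\widehat{\mathcal{U}^{H}}$, (b) a topological universal $R$-matrix $\mathcal{R}\in \mathcal{U}^{H\widehat{\otimes} 2}$, and (c) a central, invertible, even twist $\theta\in \widehat{\mathcal{U}^{H}}$ compatible with $\mathcal{R}$ and the pivotal element.

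First, I would invoke the continuity statements already proved: the product, coproduct, unit, counit and antipode on $\mathcal{U}^{H}$ extend continuously to $\widehat{\mathcal{U}^{H}}$ for the topology of uniform convergence on compact sets. Together with Proposition \ref{completion}, which identifies $\mathcal{U}^{H\widehat{\otimes} n}\simeq W^{\otimes n}\otimes \mathscr{C}^{\omega}(h_{i,j})$, this gives a topological Hopf superalgebra in which the coproduct lands in the completed tensor product. Next, I would quote the lemma asserting that $\mathcal{R}=\check{\mathcal{R}}\mathcal{K}$ is a topological universal $R$-matrix of $\widehat{\mathcal{U}^{H}}$, which in particular satisfies the quasi-triangularity identities, the Yang--Baxter equation, and the compatibility $(S\otimes\Id)(\mathcal{R})=\mathcal{R}^{-1}=(\Id\otimes S^{-1})(\mathcal{R})$ listed after Equation \eqref{twist not sigma}.

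For the twist, I would take the element $\theta$ defined by Equation \eqref{twist not sigma} using the pivotal element $\phi_{0}=k_{1}^{-\ell}k_{2}^{-2}$ of Proposition 3.3 in \cite{Ha16}. The centrality, invertibility and evenness of $\theta$ are built in by construction (the pivotal element is central and even, and $m\circ \tau^{s}\circ(\Id\otimes S)(\mathcal{R})$ is central by the standard Drinfeld argument in the topological setting, which goes through because $\mathcal{R}\in \mathcal{U}^{H\widehat{\otimes} 2}$ is quasi-triangular). The three twist axioms $\varepsilon(\theta)=1$, $\Delta(\theta)=\tau^{s}(\mathcal{R})\mathcal{R}(\theta\otimes\theta)$ and $S(\theta)=\theta$ are exactly the content of Proposition \ref{steta}, whose verification is already carried out in the text via the Harish-Chandra projection $\varphi$ and the separation result of Proposition \ref{u=0}.

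Assembling these pieces gives that $(\widehat{\mathcal{U}^{H}},\mathcal{R},\theta)$ satisfies every axiom of a ribbon Hopf superalgebra, with all tensor products interpreted as completed tensor products and all structure maps continuous; this is the statement of the theorem. The main conceptual obstacle, already surmounted in Proposition \ref{steta}, is the identity $S(\theta)=\theta$: it is not formal at the level of $\mathcal{U}^{H}$ alone, and its proof genuinely uses the topological completion, since one needs the Harish-Chandra image $\varphi(z)$ of the central element $z=S(\theta)-\theta$ to live in $\mathscr{C}^{\omega}(\mathfrak{H}^{*})$ and to vanish on every weight of every typical module, so that analytic continuation forces $\varphi(z)=0$ and hence $z=0$. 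Once that is in place, the theorem is a packaging step with no new computation required.
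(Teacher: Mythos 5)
Your proposal is correct and follows essentially the same route as the paper: the theorem is obtained by assembling the continuity of the Hopf structure maps on the completion, the lemma that $\mathcal{R}=\check{\mathcal{R}}\mathcal{K}$ is a topological universal $R$-matrix, and Proposition \ref{steta} for the twist $\theta$ built from the pivotal element, with the only nontrivial point being $S(\theta)=\theta$ via the Harish-Chandra homomorphism and Proposition \ref{u=0}. This matches the paper's own (implicit) proof, which simply cites these preceding results.
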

%%%%%%%%%%%%%%%%%%%%%%%%%%%%%%%%%%%%
\subsection{Bosonization of $\widehat{\mathcal{U}^{H}}$}\label{bosonization}
It is known that each ribbon superalgebra has an associated ribbon algebra, namely its bosonization (see  \cite{Majid94}). For the ribbon superalgebra $\widehat{\mathcal{U}^{H}}$, its bosonization denoted by $\widehat{\mathcal{U}^{H}}^{\sigma}$, is a topological ribbon algebra by adding an element $\sigma$ from $\widehat{\mathcal{U}^{H}}$, i.e. as an algebra, $\widehat{\mathcal{U}^{H}}^{\sigma}$ is the semi-direct product of $\widehat{\mathcal{U}^{H}}$ with $\mathbb{Z}/2\mathbb{Z}=\{1, \sigma\}$ where the
action of $\sigma$ is given by 
\begin{equation}\label{relation with sigma}
\sigma x=(-1)^{\deg x}x\sigma \quad \text{for}\ x \in \widehat{\mathcal{U}^{H}}.
\end{equation}
The coproduct $\Delta^{\sigma}$, the counity $\varepsilon^{\sigma}$ and the antipode $S^{\sigma}$ on $\widehat{\mathcal{U}^{H}}^{\sigma}$ given by 
\begin{itemize}
\item $\Delta^{\sigma}\sigma=\sigma \otimes \sigma,\ \Delta^{\sigma}(x)=\sum_{i}x_{i}\sigma^{\deg x_{i}^{'}}\otimes x_{i}^{'}$ where $\Delta(x)=\sum_{i}x_i \otimes x_{i}^{'}$ for $x \in \widehat{\mathcal{U}^{H}}$,
\item $\varepsilon^{\sigma}(\sigma)=1,\ \varepsilon^{\sigma}(x)=\varepsilon(x)$ for $x \in \widehat{\mathcal{U}^{H}}$ and 
\item $S^{\sigma}(\sigma)=\sigma, \ S^{\sigma}(x)=\sigma^{\deg x}S(x)$ for $x \in \widehat{\mathcal{U}^{H}}$.
\end{itemize}
The universal $\mathcal{R}$-matrix $\mathcal{R}^{\sigma}$ in $\widehat{\mathcal{U}^{H}}^{\sigma}$ determined by 
\begin{equation*}
\mathcal{R}^{\sigma}=R_1\sum_{i}R_{i}^{1}\sigma^{\deg R_{i}^2}\otimes R_{i}^2
\end{equation*}
where $R_1=\dfrac{1}{2}\left( 1\otimes 1+\sigma \otimes 1 + 1 \otimes\sigma-\sigma \otimes \sigma \right)$ and $\mathcal{R}=\sum_{i}R_{i}^{1}\otimes R_{i}^2$ is the universal $\mathcal{R}$-matrix in $\widehat{\mathcal{U}^{H}}$. 
Note that the universal $\mathcal{R}$-matrix $\mathcal{R}^{\sigma}$ can be written by 
\begin{equation}\label{R matrix}
\mathcal{R}^{\sigma}=\sum_{i}a_i\otimes b_i \sum_{j}\mathcal{K}_{j}^{1}\otimes \mathcal{K}_{j}^{2}
\end{equation}
where the terms $a_i, b_i$ do not contain $h_1, h_2$ for all $i$ and $\mathcal{K}=\sum_{j}\mathcal{K}_{j}^{1}\otimes \mathcal{K}_{j}^{2}$ is the Cartan part which contains only $h_1, h_2$ (see Equation \eqref{Eq K}). Its inverse denotes 
\begin{equation}\label{R matrix inverse}
\left(\mathcal{R}^{\sigma}\right)^{-1}=\sum_{j}\overline{\mathcal{K}}_{j}^{1}\otimes \overline{\mathcal{K}}_{j}^{2}\sum_{i} \overline{a}_i\otimes \overline{b}_i. 
\end{equation}
The pivotal element of the ribbon algebra $\widehat{\mathcal{U}^{H}}^{\sigma}$ is $\phi_{0}^{\sigma}=\sigma \phi_0$. We denote $\mathcal{U}^{\sigma}$ the Hopf subalgebra of $\widehat{\mathcal{U}^{H}}^{\sigma}$ generated by elements $e_i, f_i, k_i, k_i^{-1}$ for $i=1, 2$ and $\sigma$.
% and the additional relations $e_{1}^{\ell}=f_{1}^{\ell}=0$. 
It is a pivotal Hopf algebra with a pivotal element $\phi_{0}^{\sigma}$.
%%%%%%%%%%%%%%%%%%%%%%%%%%%%%%%%%%%%%%%%%%%%%%%%%%%%%%%%%%%%%%%
%\section{Topological ribbon Hopf algebra from unrolled quantum groups}

%%%%%%%%%%%%%%%%%%%%%%%%%%%%%%
\section{Universal invariant of link diagrams}
It is well known that from a ribbon algebra one can construct an universal invariant of oriented framed links, for exemple one can see these constructions which presented by Habiro (see \cite{habiro2006bottom}), Hennings (see \cite{Henning96}), Kauffman and Radford (see \cite{kauffman2001oriented}), Ohtsuki (see \cite{Ohtsuki02}), ... In previous section we proved that $\widehat{\mathcal{U}^{H}}$ is a ribbon superalgebra in the topological sense so its bosonization is a ribbon algebra. This ribbon algebra allows to construct an universal invariant of oriented framed links. In this section we apply the methods above to reconstruct an universal invariant of oriented framed links associated with the unrolled quantum group $\mathcal{U}^{H}$. Then we will use this invariant to construct an invariant of $3$-manifolds in the next section.
%%%%%%%%%%%%%%%%%%%%%%%%\nu
\subsection{Category of tangles}
%For \cite{ChKa95} can use Corollary XII.3.3\\ 
We recall the category $\mathcal{T}$ of framed, oriented tangles (see \cite{habiro2006bottom}, \cite{ChKa95}). The objets are the tensor words of symbols $\downarrow$ and $\uparrow$, i.e. each word forms $x_1 \otimes ... \otimes x_n$ with $x_1, ..., x_n \in \{\downarrow, \uparrow\}, n\geq 0$. The tensor word of length $0$ is denoted by $1=1_{\mathcal{T}}$. The morphisms $T:\ w \rightarrow w'$ between $w, w' \in Ob(\mathcal{T})$ are the isotopy classes of framed, oriented tangles in a cube $[0,1]^{3}$ such that the endpoints at the bottom are descriped by $w$ and those at the top by $w'$.\\
%%%%%%
The composition $gf$ of a composable pair $(f, g)$ of morphisms in $\mathcal{T}$ is obtained by placing $g$ above $f$, and the tensor product $f \otimes g$ of two morphisms $f$ and $g$ is obtained by placing $g$ on the right of $f$.\\
%%%
The braiding $c_{w,w'}:\ w \otimes w' \rightarrow w' \otimes w$ for $w, w' \in Ob(\mathcal{T})$ is the positve braiding of parallel of strings. The dual $w^{*} \in Ob(\mathcal{T})$ of $w\in Ob(\mathcal{T})$ is defined by $1^{*}=1,\ \downarrow ^{*}\ =\ \uparrow,\ \uparrow^{*}\ =\ \downarrow$ and
$$(x_1 \otimes ... \otimes x_n)^{*}=x_n^{*} \otimes ... \otimes x_1^{*}\quad\ \text{for}\ x_1, ..., x_n\in \{\downarrow,\uparrow\},\ n\geq 2.$$
For $w \in Ob(\mathcal{T})$, let
$$\ev_w:\ w^{*}\otimes w \rightarrow 1, \ \coev_w:\ 1 \rightarrow w \otimes w^{*}$$
denote the duality morphisms. For each object $w$ in $\mathcal{T}$, let $t_w:\ w \rightarrow w$ denote the positive full twist defined by
$$t_w=(w\otimes \ev_{w^{*}})(c_{w,w}\otimes w^{*})(w \otimes \coev_w).$$
It is well known that $\mathcal{T}$ is generated as a monoidal category by the objects $\downarrow, \uparrow$ and the morphisms 
$$c_{\downarrow,\downarrow},\ c_{\downarrow,\downarrow}^{-1},\  \ev_\downarrow,\ \coev_\downarrow,\ \ev_\uparrow,\ \coev_\uparrow$$ which are represented in Figure \ref{Fig0}.
\begin{figure}
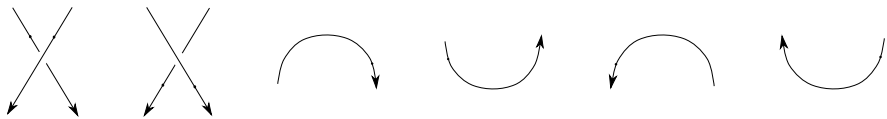

$$\epsh{fig1_1}{8ex} \qquad 
\epsh{fig1_2}{8ex}  \qquad
\epsh{fig1_12}{4ex} \qquad
\epsh{fig1_10}{4ex}  \qquad
\epsh{fig1_11}{4ex}  \qquad
\epsh{fig1_9}{4ex}  \qquad
$$
\caption{The morphisms 
$c_{\downarrow,\downarrow},\ c_{\downarrow,\downarrow}^{-1},\ \ev_\downarrow,\ \coev_\downarrow,\ \ev_\uparrow,\ \coev_\uparrow$}
	\label{Fig0}
\end{figure}
%{\em Bottom tangles} are a kind of tangles of arcs (see more \cite{habiro2006bottom}). An $n$-component bottom tangle $T=T_1\cup ... \cup T_{n}$ is a framed tangle consisting of $n$ arcs $T_1, ... , T_{n}$ in a cube such that all the endpoints of $T$ are on a line at the bottom square of the cube, and for each $i=1, ..., n$ the component $T_i$ starts from the $2i$-th endpoint on the bottom to the $(2i-1)$-th endpoint on the bottom, where the endpoints are counted from the left. 
%For $n\geq 0$, let $BT_{n}$ denotes the set of the ambient isotopy classes, relative to endpoints, of $n$-component bottom tangles.

A {\em string link} is a tangle without closed component whose arcs end at the same order as they start, with downwards orientation. 
%%%%%%%%%%%%%%%%%%%%%%%%%%%%%%%%%%%%%
%%%%%%%%%%%%%%%%%%%%%%%%%%%%%%%%%%%%%%%%
\subsection{Universal invariant of link diagrams}
We recall the notion of the $0^{\text{th}}$-Hochschild homology for an algebra $A$, that is $\text{H\!H}_{0}(A):=A/[A,A]$ where $[A,A]=\text{Span}\{xy-yx:\ x,y \in A\}$.
Let $L=L_1 \cup ... \cup L_{n}$ be a (framed, oriented) link diagram consisting of $n$ ordered circle components $L_1, ..., L_n$ with $n \geq 0$.

%We put the elements of the superalgebra in the tangles as Figure \ref{Fig1} and Figure \ref{Fig2}.
\begin{figure}
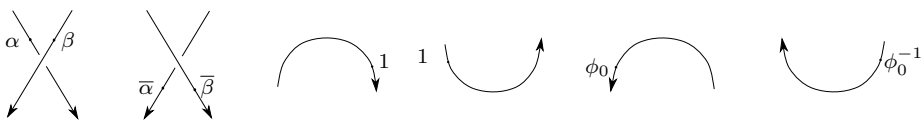

$$\epsh{fig1_1}{8ex} \put(-27,10){\ms{\alpha}} \put(-6,10){\ms{\beta}} \qquad 
\epsh{fig1_2}{8ex} \put(-27,-8){\ms{\overline{\alpha}}} \put(-4,-8){\ms{\overline{\beta}}} \qquad
\epsh{fig1_12}{4ex} \put(0,2){\ms{1}} \qquad
\epsh{fig1_10}{4ex} \put(-47,4){\ms{1}} \qquad
\epsh{fig1_11}{4ex} \put(-48,0){\ms{\phi_{0}}} \qquad
\epsh{fig1_9}{4ex} \put(0,2){\ms{\phi_{0}^{-1}}} \qquad
$$
\caption{Place elements on the strings}
	\label{Fig1}
\end{figure}

\begin{figure}
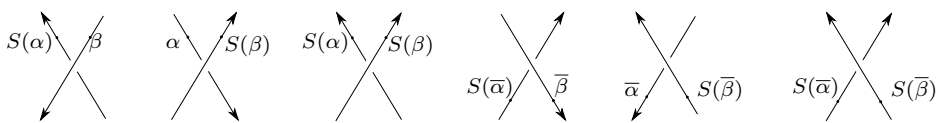

$$\epsh{fig1_3}{8ex} \put(-37,10){\ms{S(\alpha)}} \put(-6,10){\ms{\beta}} \qquad 
\epsh{fig1_5}{8ex} \put(-27,10){\ms{\alpha}} \put(-5,9){\ms{S(\beta)}} \qquad \quad
\epsh{fig1_4}{8ex} \put(-37,10){\ms{S(\alpha)}} \put(-5,9){\ms{S(\beta)}} \qquad \quad
 \epsh{fig1_6}{8ex} \put(-37,-7){\ms{S(\overline{\alpha})}} \put(-4,-7){\ms{\overline{\beta}}} \qquad
 \epsh{fig1_7}{8ex} \put(-27,-8){\ms{\overline{\alpha}}} \put(0,-8){\ms{S(\overline{\beta})}} \qquad \qquad
 \epsh{fig1_8}{8ex} \put(-37,-7){\ms{S(\overline{\alpha})}} \put(0,-8){\ms{S(\overline{\beta})}} \qquad
$$
\caption{The cases of crossings with upwards strings where $\mathcal{R}=\sum\alpha\otimes\beta$ and $\mathcal{R}^{-1}=\sum\overline{\alpha}\otimes\overline{\beta}$.}
	\label{Fig2}
\end{figure}

We can put elements of $\widehat{\mathcal{U}^{H}}^{\sigma}$ on the strings of $L$ according to the rule depicted in Figure \ref{Fig1} or in two Figures \ref{Fig1} and \ref{Fig2}. For each $j=1, ..., n$, we define $\mathcal{J}_{L_j}$ by first obtaining a word $w_j$ to be the product of the elements put on the component $L_j$ where these elements are read along the orientation of $L_j$ starting from any point (point basis) in $L_j$. Then set $\mathcal{J}_{L_j}=\tr_{q}(w_j)$ where $\tr_{q}:\ \widehat{\mathcal{U}^{H}}^{\sigma} \rightarrow \text{H\!H}_{0}\left(\widehat{\mathcal{U}^{H}}^{\sigma}\right)$, here $\text{H\!H}_{0}\left(\widehat{\mathcal{U}^{H}}^{\sigma}\right)$ is the $0^{\text{th}}$-Hochschild homology for the algebra $\widehat{\mathcal{U}^{H}}^{\sigma}$. We define
\begin{equation}\label{invariant of link}
\mathcal{J}_{L}=\sum \mathcal{J}_{L_1}\otimes ...\otimes \mathcal{J}_{L_n} \in \text{H\!H}_{0}\left(\widehat{\mathcal{U}^{H}}^{\sigma\otimes n}\right).
\end{equation}
%%%%%%%%
\begin{The}[see also Theorem 4.5 \cite{Ohtsuki02}]
$\mathcal{J}_{L}$ is a topological invariant of framed links.
\end{The}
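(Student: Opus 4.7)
The plan is to verify topological invariance by checking that $\mathcal{J}_L$ is unchanged under (i) the choice of base point on each component, (ii) ambient isotopy of strands across cups, caps, and crossings, and (iii) the framed Reidemeister moves RI, RII, RIII. Since Theorem \ref{Main theorem 1} already established that $\widehat{\mathcal{U}^H}$ is a topological ribbon superalgebra, and Subsection \ref{bosonization} explained that the bosonization $\widehat{\mathcal{U}^H}^{\sigma}$ is therefore a topological ribbon algebra with universal $R$-matrix $\mathcal{R}^{\sigma}$ and pivotal element $\phi_0^{\sigma}$, the strategy is to adapt the classical argument for the universal link invariant (as in Habiro \cite{habiro2006bottom} and Ohtsuki \cite{Ohtsuki02}, Theorem 4.5) to this topological setting, using only that the structure maps extend continuously to the completions.

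First I would handle well-definedness. The word $w_j$ obtained by reading decorations along a component $L_j$ depends a priori on the base point, but the value $\tr_q(w_j) \in \text{H\!H}_0(\widehat{\mathcal{U}^H}^{\sigma})$ is insensitive to cyclic permutation $xy \mapsto yx$, so independence of base point is immediate. The rules of Figures \ref{Fig1} and \ref{Fig2} for placing $\phi_0^{\sigma}$, respectively $(\phi_0^{\sigma})^{-1}$, at a maximum or minimum and for applying $S^{\pm 1}$ at upward crossings are consistent thanks to the identities $\phi_0 x \phi_0^{-1} = S^2(x)$ and $(S \otimes S)(\mathcal{R}) = \mathcal{R}$. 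An isolated decoration can then be slid freely along a component, across cups and caps, up to the appropriate $S^{\pm 1}$ and conjugation by $\phi_0^{\sigma}$ in $\text{H\!H}_0$.

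Next, for the Reidemeister moves: move RII reduces to $\mathcal{R}^{\sigma}(\mathcal{R}^{\sigma})^{-1} = 1$ applied locally to the two-strand tangle; move RIII is exactly the Yang--Baxter equation $\mathcal{R}_{12}\mathcal{R}_{13}\mathcal{R}_{23} = \mathcal{R}_{23}\mathcal{R}_{13}\mathcal{R}_{12}$ already recorded for $\mathcal{R}$ and inherited by $\mathcal{R}^{\sigma}$. For the framed form of RI, I would show that a positive kink contributes the twist $\theta$ through the defining formula $\theta = \phi_0 \cdot (m \circ \tau^s \circ (\text{Id} \otimes S)(\mathcal{R}))^{-1}$, while a negative kink contributes $\theta^{-1}$, so two opposite kinks cancel; the compatibility under reversing the orientation of the kinked strand uses $S(\theta) = \theta$ from Proposition \ref{steta}. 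Finally, invariance under isotopies that push a decoration through a crossing uses the intertwining $\mathcal{R}\Delta^{op}(x) = \Delta(x)\mathcal{R}$ together with $(S \otimes \text{Id})(\mathcal{R}) = \mathcal{R}^{-1}$, carried out after passing to the $\text{H\!H}_0$-quotient.

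The main obstacle I anticipate is analytic rather than algebraic. In the classical finite-dimensional setting $\mathcal{R}$ is a finite sum and every verification is purely algebraic, whereas here the Cartan factor $\mathcal{K} = \xi^{-h_1 \otimes h_2 - h_2 \otimes h_1 - 2 h_2 \otimes h_2}$ forces $\mathcal{R}$, and hence $\mathcal{R}^{\sigma}$, to live in the completion $\mathcal{U}^{H \widehat{\otimes} 2}$. Each of the identities invoked above must therefore be asserted inside the appropriate completed tensor power $\mathcal{U}^{H \widehat{\otimes} n}$, and I would appeal to the continuity of product, coproduct and antipode established in Section 2 to propagate identities from $\mathcal{U}^H$ to $\widehat{\mathcal{U}^H}$. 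The saving grace is that every link diagram involves only finitely many crossings, cups, and caps, so each verification is a local identity in a fixed finite completed tensor power; no issue of convergence of infinite products arises, and the topological subtleties reduce to those already handled in Section 2.
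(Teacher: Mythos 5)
Your proposal is correct and follows essentially the same route as the paper, whose proof simply observes that the finite-dimensional argument (base-point independence via the trace into $\text{H\!H}_0$, invariance under the framed Reidemeister moves via the ribbon structure) applies without change once the topological ribbon structure of $\widehat{\mathcal{U}^{H}}^{\sigma}$ from Section 2 is in place. Your additional remarks on carrying out each local identity in a fixed completed tensor power $\mathcal{U}^{H\widehat{\otimes} n}$ by continuity merely make explicit what the paper leaves implicit.
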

\begin{proof}
The proof in the finite dimensional setting apply without change. One can show that $\mathcal{J}_{L_j}$ does not depend on where we start reading the element on the closed components, and $\mathcal{J}_{L}$ is invariant under the Reidemeister moves for oriented links. This proves $\mathcal{J}_{L}$ is an invariant of framed links.
\end{proof}
We can similarly define the invariant of the string links by
\begin{equation}
\mathcal{J}_{T}=\sum \mathcal{J}_{T_1}\otimes ...\otimes \mathcal{J}_{T_n} \in \widehat{\mathcal{U}^{H}}^{\sigma \otimes n}
\end{equation}
where $T$ is a string link consisting of $n$ components $T_i,\ 1\leq i \leq n$, $\mathcal{J}_{T_i}$ is determined as above.
The relation between the invariant of tangles and of links is similar as Proposition 7.3 in \cite{habiro2006bottom}:
\begin{Pro}
If $T$ is a string link, then we have $$\mathcal{J}_{cl(T)}=\tr_{q}^{\otimes n}\left((\phi_0 \otimes ... \otimes \phi_0)(\mathcal{J}_{T})\right)$$
where $cl(T)$ is the closure of $T$.
\end{Pro}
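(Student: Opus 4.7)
The plan is to compare the Hopf-algebraic word read around each closed component of $cl(T)$ with the word associated to the corresponding open strand $T_i$, and to identify the extra factor contributed by the closure arc. This is a direct diagrammatic computation using the labelling rules of Figures \ref{Fig1} and \ref{Fig2}, together with the cyclic invariance of $\tr_q$.

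First, I would present $cl(T)$ as the tangle obtained by attaching, for each $i=1,\ldots,n$, an external arc connecting the top and bottom $i$-th endpoints of $T$ around one side of the tangle box, without crossings among themselves or with $T$. This is the standard presentation of the closure as a composition in the monoidal category $\mathcal{T}$, and it allows a local comparison between $cl(T_i)$ and $T_i$ at the level of the words used to define $\mathcal{J}$. Second, I would apply Figure \ref{Fig1} to compute the contribution of each closure arc. Such an arc contains exactly two turning points — a local minimum below the box (with the strand passing from downward to upward orientation as one follows $cl(T_i)$) and a local maximum above it — and no crossings. According to Figure \ref{Fig1}, one of these turning points carries a factor $\phi_0$ and the other carries $1$ (the down-oriented cup and cap being labelled by $1$, while the up-oriented ones carry $\phi_0^{\pm 1}$, and only one of the two appears on each closure arc in the correct orientation, since the other turning point reverses the role of up/down); the straight segments of the arc carry no decoration. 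The net effect of the closure arc on the $i$-th component is therefore the insertion of a single factor $\phi_0$ into the word $w_{T_i}$ read along $T_i$.

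Third, since $\tr_q$ takes values in $\text{H\!H}_0(\widehat{\mathcal{U}^H}^\sigma)=\widehat{\mathcal{U}^H}^\sigma/[\widehat{\mathcal{U}^H}^\sigma,\widehat{\mathcal{U}^H}^\sigma]$, it is invariant under cyclic permutations of the factors in the word; in particular it is immaterial at which point on $cl(T_i)$ one starts reading, so we may place the $\phi_0$ at the beginning. This gives $\mathcal{J}_{cl(T_i)} = \tr_q(\phi_0 \cdot w_{T_i})$ for each $i$, and tensoring over $i=1,\ldots,n$ combined with the defining formula \eqref{invariant of link} for the universal invariant yields the desired identity
\[
\mathcal{J}_{cl(T)} = \tr_q^{\otimes n}\bigl((\phi_0 \otimes \cdots \otimes \phi_0)(\mathcal{J}_T)\bigr).
\]

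The main obstacle is the careful bookkeeping of the pivotal element through the two turning points: one must check that they combine to give exactly one factor of $\phi_0$, rather than $1$ or $\phi_0^{\pm 2}$, for the given downward orientation of the string link. This verification is local and purely combinatorial, strictly parallel to the finite-dimensional treatment of Proposition 7.3 in \cite{habiro2006bottom}; the topological completion of $\mathcal{U}^H$ plays no role here since the closure arcs involve only finitely many elementary morphisms, each decorated by a fixed element of $\widehat{\mathcal{U}^H}^\sigma$.
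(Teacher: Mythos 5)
Your argument is correct and is essentially the proof the paper intends: the paper gives no argument of its own beyond pointing to Proposition 7.3 of \cite{habiro2006bottom}, and your direct diagrammatic computation — one crossing-free closure arc per component whose two turning points contribute, by Figure \ref{Fig1}, the labels $1$ (the down-oriented cup $\coev_\downarrow$) and $\phi_0$ (the up-oriented cap $\ev_\uparrow$), followed by cyclic invariance of $\tr_q$ in $\text{H\!H}_0$ — is exactly that standard argument transplanted here. The one step you defer as the ``main obstacle'' is indeed immediate once the closure arcs are drawn on the side for which the bottom turn is $\coev_\downarrow$ and the top turn is $\ev_\uparrow$ (closing on the other side would instead produce $\coev_\uparrow$ and $\ev_\downarrow$, i.e.\ a factor $\phi_0^{-1}$, which is reconciled only via the isotopy invariance of $\mathcal{J}$), so fixing that choice of closure makes your verification a two-line check.
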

%%%%%%%%%%%%%%%%%%%%%%%%%%%%%%%%%%
\subsection{Value of universal invariant of link diagrams}
For $x, y \in \mathbb{C}^2\times \mathbb{C}^2$, call $Q(x,y)$ the polarization of the quadratic form determined by the matrix $B=(b_{ij})$ which is given by $b_{11}=0,\ b_{12}=b_{21}=-1,\ b_{22}=-2$.
Recall that $h_{i,j}=1\otimes ...\otimes h_i \otimes ... \otimes 1$ where $h_i$ is in $j$-th position for $i=1, 2$ and $j = 1, ..., n$. Let $\mathfrak{H}^{(n)}=\Vect_{\mathbb{C}}\{h_{ij}\}\subset \mathcal{U}^{H\otimes n}$ and $Q_{ij}$ be the quadratic form on $\mathfrak{H}^{(n)^*}$ defined by
$$Q_{ij}(h)=Q(h_{[i]}, h_{[j]})=h_{[i]}^{t}Bh_{[j]}$$ where $h_{[i]}$ is the column matrix $ \begin{pmatrix} h_{1,i}\\ h_{2,i} \end{pmatrix}$ for $i=1,..., n$. 
Recall also the formula for the universal $\mathcal{R}$-matrix and its inverse in Equations \eqref{R matrix} and \eqref{R matrix inverse}.

Let $L$ be a link diagram consisting of $n$ ordered circle components $L_1, ..., L_n$.
%Consider the invariant of link diagrams $L$ determined by \eqref{invariant of link}
%\begin{equation*}
%\mathcal{J}_{L}=\sum  \mathcal{J}_{L_1}\otimes ... \mathcal{J}_{L_n}.
%\end{equation*} 
Denote by $\lk = (\lk_{ij})$ the linking matrix of link diagrams $L$ and
% $B=\begin{pmatrix} 0&-1 \\ -1&-2 \end{pmatrix}$
set $Q_{L}(h)=\sum_{1\leq i, j \leq n}\lk_{ij}Q_{ij}(h)$. We consider the algebraic automorphisms $\varphi_{ij}, \varphi_{Q_L}$ of $\widehat{\mathcal{U}^{H}}^{\sigma \otimes n}$ given by 
\begin{equation*}
\varphi_{ij}(x)= \xi^{-Q_{ij}(h)}x\xi^{Q_{ij}(h)}, \quad \varphi_{Q_L}(x)=\xi^{-Q_L(h)}x\xi^{Q_L(h)}\ \text{for}\ x \in \widehat{\mathcal{U}^{H}}^{\sigma \otimes n}.
\end{equation*}
Remark that $\varphi_{ij}$ and $\varphi_{Q_L}$ restrict to an automorphism of $\mathcal{U}^{\sigma\otimes n}$. Indeed, we denote the weight of an element $x \in \mathcal{U}^{H}$ for $h_i$ by 
$\vert x\vert_{i},\ i=1,2$, we have that $\vert x\vert_{i} \in \mathbb{Z}$.
We also recall that $$h_i x=x(h_i+\vert x\vert_{i}), \ x h_i = (h_i-\vert x\vert_{i})x \ \text{for} \ x \in \mathcal{U}^{H}.$$
These equalities imply that for $x=\bigotimes_{k=1}^{n}x_k \in \mathcal{U}^{\sigma\otimes n}$ we have $$\bigotimes_{k=1}^{n}x_k\xi^{h_{1,i}h_{2,j}}=\xi^{1\otimes ...\otimes (h_1-\vert x_i\vert_{1})\otimes ... \otimes (h_2-\vert x_j\vert_{2})\otimes ... \otimes 1}\bigotimes_{k=1}^{n}x_k.$$
Then $\xi^{h_i}=k_i \in \mathcal{U}^{\sigma}$ implies that $x\xi^{h_{1,i}h_{2,j}}=\xi^{h_{1,i}h_{2,j}}x'$ with $x'\in \mathcal{U}^{\sigma\otimes n}$. This deduces that $\varphi_{ij}(\mathcal{U}^{\sigma\otimes n}) = \mathcal{U}^{\sigma\otimes n}$ for $1\leq i, j \leq n$ and $\varphi_{Q_L}(\mathcal{U}^{\sigma\otimes n}) = \mathcal{U}^{\sigma\otimes n}$.
We have the theorem.
\begin{The}\label{gtri J}
The value of the invariant of $L$ satisfies
\begin{equation}
\xi^{-Q_{L}(h)}\mathcal{J}_{L}\in\mathcal{U}^{\sigma\otimes n}/N_{Q_L}
\end{equation}
where $N_{Q_L}= \xi^{-Q_{L}(h)}[\widehat{\mathcal{U}^{H}}^{\sigma\otimes n}, \ \widehat{\mathcal{U}^{H}}^{\sigma\otimes n}]\cap \mathcal{U}^{\sigma \otimes n}$.
% $N_{Q_L} = Span_{\mathbb{C}}\{xy-\varphi_{Q_L}(y)x\ | \ x, y \in \mathcal{U}^{\sigma\otimes n}\}$.

%$N_{Q_L} = \xi^{-Q_{L}(h)}N_{Q_L}^H \cap \mathcal{U}^{\sigma\otimes n},\  N_{Q_L}^H=Span_{\mathbb{C}}\{xy-yx| \ x, y \in \widehat{\mathcal{U}^H}^{\sigma\otimes n}\}$. We denote $\mathcal{J}_{L}=\xi^{Q_L(h)}\check{\mathcal{J}}_{L}$ with $\check{\mathcal{J}}_{L} \in \mathcal{U}^{\otimes n}/N_{Q_L}$. In particular $\check{\mathcal{J}}_{L}=\xi^{-Q_L(h)}\mathcal{J}_{L}$ define also an invariant of link diagrams.
\end{The}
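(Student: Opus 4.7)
The plan is to exploit the decomposition $\mathcal{R}^{\sigma}=\bigl(\sum_{i}a_{i}\otimes b_{i}\bigr)\mathcal{K}$ of \eqref{R matrix}, in which the $a_{i},b_{i}$ belong to $\mathcal{U}^{\sigma}$ and the entire $h$-dependence of the $R$-matrix is concentrated in the Cartan factor $\mathcal{K}=\xi^{-h_{1}\otimes h_{2}-h_{2}\otimes h_{1}-2h_{2}\otimes h_{2}}$. The universal invariant $\mathcal{J}_{L}$ is obtained by placing one copy of $\mathcal{R}^{\sigma}$ or $(\mathcal{R}^{\sigma})^{-1}$ at each crossing and a pivotal $\phi_{0}^{\pm1}=(k_{1}^{-\ell}k_{2}^{-2})^{\pm1}\in\mathcal{U}^{\sigma}$ at each cup or cap. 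The strategy is to push all the Cartan contributions to one side of the product, identify their accumulated value as $\xi^{Q_{L}(h)}$, and conclude that what remains lives in $\mathcal{U}^{\sigma\otimes n}$.

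At a crossing between strands lying on components $L_{a}$ and $L_{b}$, the Cartan factor $\mathcal{K}^{\pm1}$, reinterpreted in $\widehat{\mathcal{U}^{H}}^{\sigma\otimes n}$ with the first tensor slot placed on $L_{a}$ and the second on $L_{b}$, contributes $\xi^{\epsilon\,Q_{ab}(h)}$, where $\epsilon=\pm1$ depends on the sign of the crossing and on the orientations of the two strands (see Figures \ref{Fig1}--\ref{Fig2}, where the antipodes applied to upward strands act on the Cartan part by simple sign flips). Since all Cartan factors lie in the commutative subalgebra topologically generated by the $h_{i,j}$'s, they can be gathered into a single exponential. Summing over all crossings and using the symmetry $Q_{ab}(h)=Q_{ba}(h)$, the signed count collapses, via the definition of the linking matrix, to the expression $\sum_{1\le i,j\le n}\lk_{ij}Q_{ij}(h)$, so the total Cartan factor equals $\xi^{Q_{L}(h)}$.

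To extract $\xi^{Q_{L}(h)}$ as a left factor, I commute it through the accumulated product of non-Cartan terms $a_{i},b_{i},\phi_{0}^{\pm1}$; this is exactly the action of the automorphism $\varphi_{Q_{L}}$, which was observed in the discussion preceding the statement to restrict to an automorphism of $\mathcal{U}^{\sigma\otimes n}$. Consequently the non-Cartan word stays inside $\mathcal{U}^{\sigma\otimes n}$, producing a lift of $\mathcal{J}_{L}$ in $\widehat{\mathcal{U}^{H}}^{\sigma\otimes n}$ of the form $\xi^{Q_{L}(h)}\cdot u$ with $u\in\mathcal{U}^{\sigma\otimes n}$.

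Since $\mathcal{J}_{L}$ is only a class in $\text{H\!H}_{0}\bigl(\widehat{\mathcal{U}^{H}}^{\sigma\otimes n}\bigr)$, two such lifts differ by a commutator in $\widehat{\mathcal{U}^{H}}^{\sigma\otimes n}$; the corresponding difference $u-u'$ therefore lies in $\xi^{-Q_{L}(h)}[\widehat{\mathcal{U}^{H}}^{\sigma\otimes n},\widehat{\mathcal{U}^{H}}^{\sigma\otimes n}]\cap\mathcal{U}^{\sigma\otimes n}=N_{Q_{L}}$, which yields the asserted well-defined class $\xi^{-Q_{L}(h)}\mathcal{J}_{L}\in\mathcal{U}^{\sigma\otimes n}/N_{Q_{L}}$. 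The main technical obstacle is the Cartan collection: one must verify the sign bookkeeping across all six crossing types of Figures \ref{Fig1}--\ref{Fig2}, and check that the combinatorial sum over crossings indeed reproduces $\sum_{i,j}\lk_{ij}Q_{ij}(h)$, reconciling the factor of two on off-diagonal terms with the convention $\lk_{ij}=\tfrac{1}{2}\sum\epsilon$ for crossings between distinct components, while on-diagonal entries receive the full writhe $\lk_{ii}$.
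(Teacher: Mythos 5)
Your proposal is correct and follows essentially the same route as the paper: at each crossing the Cartan factor contributes $\xi^{\varepsilon_{ij}Q_{ij}(h)}$, these are collected into $\xi^{Q_{L}(h)}$ (the signed crossing count matching $\sum_{i,j}\lk_{ij}Q_{ij}(h)$ exactly as you reconcile it), the commutation past the non-Cartan letters is handled by the automorphisms $\varphi_{ij},\varphi_{Q_L}$ preserving $\mathcal{U}^{\sigma\otimes n}$, and the ambiguity coming from $\text{H\!H}_{0}$ is absorbed into $N_{Q_L}$. Your write-up is in fact somewhat more explicit than the paper's proof about the sign bookkeeping and the well-definedness modulo $N_{Q_L}$.
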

\begin{proof}
We represent the value of $\mathcal{J}_{L}$ by the product of two parts, the first one is in $\mathscr{C}^{\omega}(\mathfrak{H}^*)$ and the second one is in the tensor product of copies of $\mathcal{U}^\sigma$ as follow. For $j=1,..., n$ we first put the element of $\widehat{\mathcal{U}^{H}}^\sigma$ on the strands $L_j$ following the rule depicted in Figure \ref{Fig1}. Second, we fix the Cartan parts of the elements at the cross points and then push the rest of the elements to the point basis of strand (along the orientation of $L_j$), the product of this part gives an element $w_j\in \mathcal{U}^\sigma$ for $j=1,..., n$. At each point of crossing $(i, j)$ between the $i$-strand and $j$-strand of $L$, its Cartan part gives us the element $$\xi^{\varepsilon_{ij} (-h_{1,i}h_{2,j}-h_{2,i}h_{1,j}-2h_{2,i}h_{2,j})}=\xi^{\varepsilon_{ij} Q_{ij}(h)}$$
where $\varepsilon_{ij}=\pm 1$ is the sign of the crossing $(i, j)$. Hence the value of $\mathcal{J}_{L}$ can be written as a product of $\xi^{Q_L(h)}$ and an element of the quotient of $\mathcal{U}^{\sigma\otimes n}$ by subspace $N_{Q_L}:=\xi^{-Q_{L}(h)}[\widehat{\mathcal{U}^{H}}^{\sigma\otimes n}, \ \widehat{\mathcal{U}^{H}}^{\sigma\otimes n}]\cap \mathcal{U}^{\sigma \otimes n}$.
This means that $\xi^{-Q_{L}(h)}\mathcal{J}_{L}\in\mathcal{U}^{\sigma\otimes n}/N_{Q_L}$.
% where $N_{Q_L} = \xi^{-Q_{L}(h)}N_{Q_L}^H \cap \mathcal{U}^{\sigma\otimes n}$.
\end{proof}
\begin{rmq}
As $[\widehat{\mathcal{U}^{H}}^{\sigma\otimes n}, \ \widehat{\mathcal{U}^{H}}^{\sigma\otimes n}]=\text{Span}_{\mathbb{C}}\{xy-yx| \ x, y \in \widehat{\mathcal{U}^H}^{\sigma\otimes n}\}$ then $\xi^{-Q_{L}(h)}[\widehat{\mathcal{U}^{H}}^{\sigma\otimes n}, \ \widehat{\mathcal{U}^{H}}^{\sigma\otimes n}]=\text{Span}_{\mathbb{C}}\{\xi^{-Q_{L}(h)}(xy-yx)| \ x, y \in \widehat{\mathcal{U}^H}^{\sigma\otimes n}\}=\text{Span}_{\mathbb{C}}\{xy-\varphi_{Q_L}(y)x\ | \ x, y \in \widehat{\mathcal{U}^H}^{\sigma\otimes n}\}$. I do not know if the following is true: is  $N_{Q_L}$ equal to $\text{Span}_{\mathbb{C}}\{xy-\varphi_{Q_L}(y)x\ | \ x, y \in \mathcal{U}^{\sigma\otimes n}\}$.
\end{rmq}

Note also that $\mathcal{J}_{L}$ belongs in $\left(\widehat{\mathcal{U}^{H}}^{\sigma\otimes n}\right)^{\widehat{\mathcal{U}^{H}}^{\sigma}}$ where\\
 $\left(\widehat{\mathcal{U}^{H}}^{\sigma\otimes n}\right)^{\widehat{\mathcal{U}^{H}}^{\sigma}}=\left\{u\in \widehat{\mathcal{U}^{H}}^{\sigma\otimes n}| u\Delta^{[n]}(x)=\Delta^{[n]}(x) u\right\}$ for all $x \in \widehat{\mathcal{U}^{H}}^{\sigma}$. A proof of this assertion can be seen in Lemma 6 \cite{BBGe17}.

%%%%%%%%%%%%%%%%%%%%%%%%%%%%%
\section{Invariant of $3$-manifolds of Hennings type}
In the article \cite{Henning96}, Hennings proposed a method to construct an invariant of $3$-manifolds from an universal invariant of links by using a finite dimensional ribbon algebra with its right integral. The invariant of $3$-manifolds is computed from the universal invariant of links. The key point of the construction is the role of a right integral of the Hopf algebra \cite{Henning96}. It is well known that it always exists a right integral on a finite dimensional Hopf algebra. Virelizier generalised this fact by using the notions of a finite type unimodular ribbon Hopf $\pi$-coalgebra and the right $\pi$-integral to construct an invariant of $3$-manifolds with $\pi$-structure. Here $\pi$ is a group and the structure is given by representation of the fundamental group in $\pi$ (see \cite{Vire01}). When $\pi=G$ is commutative a $G$-structure reduces to a $G$-valued cohomology class. 
In the case of the unrolled algebra $\mathcal{U}_{\xi}^{H}\mathfrak{sl}(2|1)$, the associated Hopf $G$-coalgebra can be ribbon but not finite type. However, we show that the associated Hopf $G$-coalgebra induces a finite type Hopf $G$-coalgebra by forgetting $h_1, h_2$.
% however the value of the universal invariant link  
We show that we can still construct an invariant of $3$-manifolds of Hennings type by working on the pairs $(M, \omega)$ in which $M$ is a $3$-manifold and $\omega$ is a cohomology class in $H^{1}(M, G)$. The construction of the invariant uses the discrete Fourier transform and the $G$-integrals for the finite type Hopf $G$-coalgebra associated with $\mathcal{U}_{\xi}\mathfrak{sl}(2|1)$. This invariant is a generalisation of the one in \cite{Vire02} that apply to $\mathcal{U}_{\xi}^H\mathfrak{sl}(2|1)$.
We recall some definitions from \cite{Oh93, Vire02}.
%%%%%%%%%%%%%%%%%%%%%%%%%%%%%%%%%%%
\subsection{Hopf $G$-coalgebra from the pivotal Hopf algebra $\mathcal{U}^{\sigma}$}
\begin{Def}
Let $\pi$ be a group. A $\pi$-coalgebra over $\mathbb{C}$ is a family $C=\{C_{\alpha}\}_{\alpha \in \pi}$ of $\mathbb{C}$-spaces endowed with a family $\Delta=\{\Delta_{\alpha, \beta}:\  C_{\alpha\beta}\rightarrow C_{\alpha}\otimes C_{\beta}\}_{\alpha, \beta \in \pi}$ of $\mathbb{C}$-linear maps (the coproduct) and a $\mathbb{C}$-linear map $\varepsilon:\ C_1 \rightarrow \mathbb{C}$ (the counit) such that
\begin{itemize}
\item $\Delta$ is coassociative, i.e. for any $\alpha, \beta, \gamma \in \pi$,
\begin{equation*}
(\Delta_{\alpha, \beta}\otimes \Id_{C_{\gamma}})\Delta_{\alpha\beta, \gamma}=(\Id_{C_{\alpha}}\otimes \Delta_{\beta, \gamma})\Delta_{\alpha, \beta\gamma},
\end{equation*} 
\item for all $\alpha \in \pi, \ (\Id_{C_{\alpha}}\otimes \varepsilon)\Delta_{\alpha,1}=\Id_{C_{\alpha}}=(\varepsilon \otimes \Id_{C_{\alpha}})\Delta_{1,\alpha}$.
\end{itemize}
\end{Def}
A {\em Hopf $\pi$-coalgebra} is a $\pi$-coalgebra $H=(\{H_{\alpha}\}_{\alpha \in \pi}, \Delta, \varepsilon)$ endowed with a family $S=\{S_{\alpha}:\ H_{\alpha}\rightarrow H_{\alpha^{-1}}\}_{\alpha\in \pi}$ of $\mathbb{C}$-linear maps (the antipode) such that
\begin{itemize}
\item each $H_{\alpha}$ is an algebra with product $m_{\alpha}$ and unit element $1_{\alpha}\in H_{\alpha}$,
\item $\varepsilon:\ H_{1} \rightarrow \mathbb{C}$ and $\Delta_{\alpha, \beta}:\ H_{\alpha\beta}\rightarrow H_{\alpha}\otimes H_{\beta}$ are algebra homomorphisms for all $\alpha, \beta \in \pi$,
\item for any $\alpha \in \pi$, 
\begin{equation*}
m_{\alpha}(S_{\alpha^{-1}}\otimes \Id_{H_{\alpha}})\Delta_{\alpha^{-1}, \alpha}=\varepsilon 1_{\alpha}=m_{\alpha}(\Id_{H_{\alpha}}\otimes S_{\alpha^{-1}})\Delta_{\alpha, \alpha^{-1}}.
\end{equation*}
\end{itemize}

A Hopf $\pi$-coalgebra is of {\em finite type} if $H_{\alpha}$ is finite dimensional algebra for any $\alpha\in \pi$.

Recall that 
%$\mathcal{U}^{\sigma}$ is the Hopf algebra modulo the relations
%$e_{1}^\ell=f_{1}^\ell=0$ and 
$C=\mathbb{C}[k_{1}^{\pm \ell}, k_{2}^{\pm \ell}]$ is the commutative Hopf subalgebra in the center of $\mathcal{U}^{\sigma}$. Let $G=(\mathbb{C}/\mathbb{Z}\times \mathbb{C}/\mathbb{Z}, +) \xrightarrow{\sim} \Hom_{Alg}(C,\mathbb{C}),\ (\overline{\alpha}_1, \overline{\alpha}_2) \mapsto \left(k_{i}^\ell \mapsto \xi^{\ell\alpha_i}\right)$ for $i=1, 2$ and let $\mathcal{U}_{\overline{\alpha}}$ be the algebra $\mathcal{U}^{\sigma}$ modulo the
relations $k_{i}^{\ell}=\xi^{\ell\alpha_i}$ for $\overline{\alpha}\in G$.
\begin{Pro}
The family $\mathcal{U}^\sigma=\{\mathcal{U}_{\overline{\alpha}}\}_{\overline{\alpha} \in G}$ is a finite type Hopf $G$-coalgebra.
\end{Pro}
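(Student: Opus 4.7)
The plan is to verify each component of the Hopf $G$-coalgebra structure in turn, exploiting the fact that the central Hopf subalgebra $C=\mathbb{C}[k_1^{\pm \ell}, k_2^{\pm \ell}]$ is precisely the source of the $G$-grading, with $G$ being its group of algebra characters. The generators $k_1^\ell, k_2^\ell$ are group-like and central, which is the typical mechanism for producing a Hopf group-coalgebra from a single Hopf algebra.

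First I would establish finite-dimensionality. Using the PBW-type basis of $\mathcal{U}^\sigma$ given by $\mathfrak{B}_+ \mathfrak{B}_0 \mathfrak{B}_- \cdot \{1, \sigma\}$, the only infinite factor is $\mathfrak{B}_0 = \{k_1^{s_1} k_2^{s_2} : s_i \in \mathbb{Z}\}$. The two-sided ideal $I_{\overline{\alpha}} = (k_1^\ell - \xi^{\ell\alpha_1},\ k_2^\ell - \xi^{\ell\alpha_2})$ is genuinely two-sided because $k_i^\ell$ is central, and the quotient collapses $\mathfrak{B}_0$ to the finite set indexed by $0 \leq s_1, s_2 \leq \ell-1$ without affecting the other factors. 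This gives $\dim \mathcal{U}_{\overline{\alpha}} = 32\ell^4$ for every $\overline{\alpha} \in G$, confirming the finite type condition.

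Next I would construct the $G$-coalgebra structure maps by descent from $\mathcal{U}^\sigma$. The key computation is
\begin{equation*}
\Delta(k_i^\ell - \xi^{\ell(\beta_i + \gamma_i)}) = (k_i^\ell - \xi^{\ell \beta_i}) \otimes k_i^\ell + \xi^{\ell \beta_i} \otimes (k_i^\ell - \xi^{\ell \gamma_i}),
\end{equation*}
which shows $\Delta(I_{\overline{\beta}+\overline{\gamma}}) \subset I_{\overline{\beta}} \otimes \mathcal{U}^\sigma + \mathcal{U}^\sigma \otimes I_{\overline{\gamma}}$, so the coproduct descends to an algebra homomorphism $\Delta_{\overline{\beta}, \overline{\gamma}}: \mathcal{U}_{\overline{\beta}+\overline{\gamma}} \to \mathcal{U}_{\overline{\beta}} \otimes \mathcal{U}_{\overline{\gamma}}$. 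Similarly, $\varepsilon(k_i^\ell - 1) = 0$ gives $\varepsilon: \mathcal{U}_0 \to \mathbb{C}$. For the antipode, $S(k_i^\ell - \xi^{\ell \alpha_i}) = k_i^{-\ell} - \xi^{\ell \alpha_i}$, and inside $\mathcal{U}_{-\overline{\alpha}}$ one has $k_i^\ell = \xi^{-\ell\alpha_i}$ so $k_i^{-\ell} = \xi^{\ell\alpha_i}$; hence $S$ descends to $S_{\overline{\alpha}}: \mathcal{U}_{\overline{\alpha}} \to \mathcal{U}_{-\overline{\alpha}}$.

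Finally, the Hopf $G$-coalgebra axioms (coassociativity $(\Delta_{\overline{\alpha},\overline{\beta}} \otimes \Id)\Delta_{\overline{\alpha}+\overline{\beta}, \overline{\gamma}} = (\Id \otimes \Delta_{\overline{\beta},\overline{\gamma}})\Delta_{\overline{\alpha}, \overline{\beta}+\overline{\gamma}}$, the counit identities, and the antipode identities) follow immediately by passing the corresponding equalities in $\mathcal{U}^\sigma$ through the quotient maps, since all the defining relations of $\mathcal{U}^\sigma$ are preserved by $\Delta$, $\varepsilon$ and $S$ in the $G$-graded sense above. I expect the main obstacle to be bookkeeping rather than substance: verifying that the Hopf identities passed through the quotient indeed land in the correct $\overline{\alpha}$-component, so that the group law of $G$ is exactly reproduced by the composition of structure maps. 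Conceptually nothing can fail because the whole grading is encoded in the central group-like elements $k_1^\ell, k_2^\ell$, and $G$ was defined to be their character group.
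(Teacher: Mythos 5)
Your proof is correct and takes essentially the same route as the paper: there the structure maps $\Delta_{\overline{\alpha},\overline{\beta}}$ and $S_{\overline{\alpha}}$ are defined by descent through the projections $p_{\overline{\alpha}}$ via commutative diagrams, with the general mechanism (a central Hopf subalgebra $C=\mathbb{C}[k_1^{\pm\ell},k_2^{\pm\ell}]$ and $G=\Hom_{Alg}(C,\mathbb{C})$) delegated to Example 2.3 of \cite{Ha18}, and finite type recorded as $\dim\mathcal{U}_{\overline{\alpha}}=32\ell^4$. Your explicit checks that $\Delta(I_{\overline{\beta}+\overline{\gamma}})\subset I_{\overline{\beta}}\otimes\mathcal{U}^{\sigma}+\mathcal{U}^{\sigma}\otimes I_{\overline{\gamma}}$, $\varepsilon(k_i^{\ell}-1)=0$ and $S(I_{\overline{\alpha}})\subset I_{-\overline{\alpha}}$ merely supply the details the paper outsources to that citation.
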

\begin{proof}
 By applying \cite[Example 2.3]{Ha18} it follows that $\{\mathcal{U}_{\overline{\alpha}}\}_{{\overline{\alpha}}\in G}$ is the Hopf $G$-coalgebra with the coproduct and the antipode determined by the commutative diagrams:
\begin{equation*}
\begin{diagram}
\mathcal{U}^{\sigma} &\rTo^{\Delta^{\sigma}} &\mathcal{U}^{\sigma} \otimes \mathcal{U}^{\sigma}\\
\dTo_{p_{\overline{\alpha}+\overline{\beta}}} & &\dTo_{p_{\overline{\alpha}}\otimes p_{\overline{\beta}}}\\
\mathcal{U}_{\overline{\alpha}+\overline{\beta}} &\rTo^{\Delta_{\overline{\alpha},\overline{\beta}}} &\mathcal{U}_{\overline{\alpha}} \otimes \mathcal{U}_{\overline{\beta}}
\end{diagram} \qquad \qquad \qquad
\begin{diagram}
\mathcal{U}^{\sigma} &\rTo^{S^{\sigma}} &\mathcal{U}^{\sigma} \\
\dTo_{p_{\overline{\alpha}}} & &\dTo_{p_{-\overline{\alpha}}}\\
\mathcal{U}_{\overline{\alpha}} &\rTo^{S_{\overline{\alpha}}} &\mathcal{U}_{-\overline{\alpha}}
\end{diagram}
\end{equation*}
where $p_{\overline{\alpha}}: \ \mathcal{U}^{\sigma} \rightarrow \mathcal{U}_{\overline{\alpha}}, \ x\mapsto [x]$ is the projection from $\mathcal{U}^{\sigma}$ to $\mathcal{U}_{\overline{\alpha}}$.
%The Hopf $G$-coalgebra $\{\mathcal{U}_{\overline{\alpha}}\}_{{\overline{\alpha}}\in G}$ has the pivotal structure giving by $g_{\alpha}=q^{-r\alpha}K$.\\
%%%
For $\overline{\alpha}=\overline{0}$ the Hopf algebra $\mathcal{U}_{\overline{0}}$ is called the restricted quantum $\mathfrak{sl}(2|1)$, i.e. the algebra $\mathcal{U}^{\sigma}$ modulo the relations $k_{i}^\ell=1$ for $i=1, 2$. Furthermore $\dim(\mathcal{U}_{\overline{\alpha}})=32\ell^4$ for $\overline{\alpha}\in G$. This finished the proof.
\end{proof}
%\begin{Pro}\label{U_0 modular}
%The small quantum group $\mathcal{U}_{\overline{0}}$ is factorizable.
%\end{Pro}
%\begin{proof}
%Recall that the $\mathcal{R}$-matrix element $\mathcal{R}=\check{\mathcal{R}}\mathcal{K}$ is in the small quantum group $\mathcal{U}_{\overline{0}}$ and we can write $\mathcal{R}=\sum a_i\otimes b_i$ where $a_i \in \mathfrak{B}_{+} \overline{\mathfrak{B}}_{0},\ b_i \in \mathfrak{B}_{-} \overline{\mathfrak{B}}_{0}$ with $\overline{\mathfrak{B}}_{0}=\{k_{1}^{s_1}k_{2}^{s_2}| \ 0\leq s_1, s_2 \leq \ell-1\}$ (see \cite{lentner2014new}).
%Consider the Drinfeld's map $$\psi:\ \mathcal{U}_{\overline{0}}^{*}\rightarrow \mathcal{U}_{\overline{0}},\ f \mapsto \sum_{i, j}f(b_ja_i)a_jb_i.$$ 
%Since $\{a_jb_i\}_{i, j}$ is a Poincar\'e-Birkhoff-Witt basis of $\mathcal{U}_{\overline{0}}$, it implies that $\psi$ is the injection. For each element $a_jb_i$ of the basis one can choose the $f_{ij}\in \mathcal{U}_{\overline{0}}^{*}$ given by
%$f_{ij}(b_t a_s)=\delta_{is}\delta_{jt}$ satisfying $\psi(f_{ij})=a_jb_i$. Thus $\psi$ is isomorphic and this means that $\mathcal{U}_{\overline{0}}$ is factorizable.
%\end{proof}
\begin{Pro}\label{U_0 modular}
The small quantum group $\mathcal{U}_{\overline{0}}$ is unimodular.
\end{Pro}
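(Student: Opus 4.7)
The plan is to establish unimodularity by exhibiting an explicit nonzero two-sided cointegral $\Lambda\in\mathcal{U}_{\overline{0}}$, i.e.\ an element satisfying $x\Lambda=\varepsilon^{\sigma}(x)\Lambda=\Lambda x$ for every generator $x$. Since the spaces of left and of right cointegrals in a finite-dimensional Hopf algebra are each one-dimensional, the existence of such a $\Lambda$ forces them to coincide, which is the definition of unimodularity.

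The natural candidate, guided by the PBW basis of $\mathcal{U}_{\overline{0}}$, is
$$\Lambda=(1+\sigma)\cdot f_{1}^{\ell-1}f_{3}f_{2}\cdot\Bigl(\sum_{s_{1},s_{2}=0}^{\ell-1}k_{1}^{s_{1}}k_{2}^{s_{2}}\Bigr)\cdot e_{2}e_{3}e_{1}^{\ell-1},$$
that is, the product of the top negative and top positive PBW monomials around the full Cartan sum, weighted by $(1+\sigma)$ so as to absorb the group-like generator of the bosonization. Invariance under $k_{i}$ is immediate: since $k_{i}^{\ell}=1$ in $\mathcal{U}_{\overline{0}}$, multiplication by $k_{i}$ cyclically permutes the summands of $\sum k_{1}^{s_{1}}k_{2}^{s_{2}}$. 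Invariance under $\sigma$ follows from $\sigma(1+\sigma)=1+\sigma$ together with the observation that both $f_{1}^{\ell-1}f_{3}f_{2}$ and $e_{2}e_{3}e_{1}^{\ell-1}$ are even (each contains exactly two odd factors), so $\sigma$ commutes past them without sign.

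The substantive content is the check against $e_{i}$ and $f_{i}$. For the left cointegral property I would commute $e_{i}$ rightwards through $f_{1}^{\ell-1}f_{3}f_{2}$ using the defining commutation relations, together with the rewriting $f_{3}=f_{2}f_{1}-\xi f_{1}f_{2}$, which reduces the step to the elementary (anti)commutators $[e_{i},f_{j}]$ for $i,j\in\{1,2\}$. Each such commutator produces either $0$ or a Cartan-valued term $(k_{j}-k_{j}^{-1})/(\xi-\xi^{-1})$; the latter is annihilated when it meets the Cartan sum thanks to the telescoping identity
$$\Bigl(\sum_{s=0}^{\ell-1}k_{j}^{s}\Bigr)(k_{j}-k_{j}^{-1})=\sum_{s=0}^{\ell-1}(k_{j}^{s+1}-k_{j}^{s-1})=0\quad\text{in }\mathcal{U}_{\overline{0}}.$$
The remaining $e_{i}$, now reaching the right-hand factor, is killed by $e_{1}^{\ell}=0$ (for $i=1$) or by $e_{2}^{2}=e_{3}^{2}=0$ (for $i=2$, after one more commutation past $e_{2}$). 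The checks $f_{i}\Lambda=0$ and the right-sided versions $\Lambda e_{i}=\Lambda f_{i}=0$ are completely symmetric, using the same two ingredients.

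The main obstacle is the bookkeeping intrinsic to the super setting: commuting Cartan elements past root vectors introduces scalar twists $\xi^{\pm a_{ij}}$, the odd generators contribute sign corrections, and one must verify that every commutator term actually meets the Cartan sum before the cancellation above is invoked. Organising the calculation around the two structural facts, namely the telescoping identity for $\bigl(\sum k_{j}^{s}\bigr)(k_{j}-k_{j}^{-1})$ and the nilpotency relations $e_{1}^{\ell}=f_{1}^{\ell}=0$, $e_{2}^{2}=f_{2}^{2}=0$, $e_{3}^{2}=f_{3}^{2}=0$, reduces each of the generator checks to a short computation, which then yields that $\Lambda$ is a nonzero two-sided cointegral and therefore that $\mathcal{U}_{\overline{0}}$ is unimodular.
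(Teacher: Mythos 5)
Your strategy (produce an explicit nonzero two-sided cointegral) is legitimately different from the paper's proof, which never touches integrals at this point: the paper shows the projective cover $P_{\mathbb{C}}$ of the trivial module is self-dual (analogous to Theorem 4.1 of \cite{Ha16}), concludes that the category $\mathcal{U}_{\overline{0}}$-mod is unimodular, and then invokes Lemma 4.2.1 of \cite{NgJkBp13} to transfer this to the Hopf algebra. Your route could in principle work, but your candidate fails a check you declared ``immediate''. Since $k_1f_1k_1^{-1}=\xi^{-2}f_1$ and $k_1f_2k_1^{-1}=\xi f_2$, and the monomial $f_1^{\ell-1}f_3f_2$ contains $\ell$ factors $f_1$ and two factors $f_2$, one has $k_1\,f_1^{\ell-1}f_3f_2=\xi^{-2\ell+2}f_1^{\ell-1}f_3f_2\,k_1=\xi^{2}f_1^{\ell-1}f_3f_2\,k_1$. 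The $k_1$ is then absorbed by the Cartan sum $\sum_{s_1,s_2}k_1^{s_1}k_2^{s_2}$, so nothing downstream can compensate, and $k_1\Lambda=\xi^{2}\Lambda\neq\varepsilon^{\sigma}(k_1)\Lambda$ (with $\ell\geq 3$ odd, $\xi^2\neq1$). The culprit is exactly the placement of the Cartan sum in the middle: the standard candidate puts the full sum adjacent to the weight-zero product $f_{\mathrm{top}}e_{\mathrm{top}}$ (e.g.\ $f_1^{\ell-1}f_3f_2\,e_2e_3e_1^{\ell-1}\sum_{s_1,s_2}k_1^{s_1}k_2^{s_2}$, possibly corrected by a power of $k_2$ and the $\sigma$-part), so that the conjugation factors coming from the negative and positive parts cancel before the $k_i$ reaches the sum.

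The $e_i$, $f_i$ checks are also glossed in a way that hides real content. Commuting $e_1$ through $f_1^{m}$ does not produce the untwisted term $(k_1-k_1^{-1})/(\xi-\xi^{-1})$ but twisted ones of the form $(\xi^{a}k_1-\xi^{-a}k_1^{-1})/(\xi-\xi^{-1})$ with $a\neq 0$, and for these your telescoping identity fails: $\left(\sum_{s=0}^{\ell-1}k_1^{s}\right)(\xi^{a}k_1-\xi^{-a}k_1^{-1})=(\xi^{a}-\xi^{-a})\sum_{s=0}^{\ell-1}k_1^{s}$, which is nonzero. In the classical $u_\xi\mathfrak{sl}(2)$ computation the cancellation only appears after the twisted Cartan factor is pushed past the remaining $e_1$-powers, where the exponents recombine (using $\xi^{\ell}=1$) into an honest $k_1-k_1^{-1}$; with your ordering the term meets the sum before that recombination, and in the present superalgebra one must additionally track the $f_3,e_3$ root vectors and the $\sigma$-signs. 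So as written the proof has a genuine gap: the proposed $\Lambda$ is not a cointegral, and the verification mechanism is not established. Either fix the candidate and carry out the (nontrivial) computation, or argue as the paper does via $P_{\mathbb{C}}\simeq P_{\mathbb{C}}^{*}$ and the categorical criterion for unimodularity.
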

\begin{proof}
Call $\mathscr{C}$ the even category of finite dimensional nipotent representations of $\mathcal{U}_{\xi}\mathfrak{sl}(2|1)$. 
We claim that the projective cover $P_{\mathbb{C}}$ of the trivial module is self dual: $P_{\mathbb{C}}\simeq P_{\mathbb{C}}^{*}$. The proof is analogous to Theorem 4.1 \cite{Ha16}. Furthermore $P_{\mathbb{C}}\in \mathcal{U}_{\overline{0}}\text{-mod}$ so the category  $\mathcal{U}_{\overline{0}}$-mod is unimodular. By Lemma 4.2.1 \cite{NgJkBp13} confirms that $\mathcal{U}_{\overline{0}}$ is unimodular.  
\end{proof}
A consequence of the proposition above is that the Hopf $G$-coalgebra $\mathcal{U}^{\sigma}$ is unimodular. 
%(\cite{Vire02}). 
\begin{Def}
A $\pi$-trace for a Hopf $\pi$-coalgebra $H=\{H_{\alpha}\}_{\alpha\in \pi}$ is a family of $\mathbb{C}$-linear forms $\tr =\{\tr^{\alpha}:\ H_{\alpha} \rightarrow \mathbb{C}\}_{\alpha\in \pi}$ which verifies
\begin{equation*}
\tr^{\alpha}(xy) = \tr^{\alpha}(yx),\ \tr^{\alpha^{-1}} (S_{\alpha}(x)) = \tr^{\alpha}(x)
\end{equation*}
for all $\alpha\in \pi$ and $x, y \in H_{\alpha}$.
\end{Def}
It is known that for each finite type Hopf $\pi$-coalgebra, there exists a family of linear forms called a family of the right $\pi$-integrals (\cite{Vire02}).
Call $\{\lambda_{\overline{\alpha}}\}_{\overline{\alpha} \in G}$ the family of right $G$-integral for the finite type Hopf $G$-coalgebra $\mathcal{U}^{\sigma}=\{\mathcal{U}_{\overline{\alpha}}\}_{\overline{\alpha} \in G}$. This means that the family of $\mathbb{C}$-linear forms $\lambda=(\lambda_{\overline{\alpha}})_{\overline{\alpha} \in G} \in \prod_{\overline{\alpha} \in G}\mathcal{U}_{\overline{\alpha}}^{*}$ satisfies 
\begin{equation}\label{definition of the integral}
(\lambda_{\overline{\alpha}}\otimes \Id_{\mathcal{U}_{\overline{\beta}}})\Delta_{\overline{\alpha}, \overline{\beta}}=\lambda_{\overline{\alpha}+\overline{\beta}}1_{\overline{\beta}}
\end{equation}
for all $\overline{\alpha}, \overline{\beta} \in G$ (see in Section 3 \cite{Vire02}). %This family induces a family $G$-trace $\left(\tr^{\overline{\alpha}}\right)_{\overline{\alpha}\in G}$ by Proposition \ref{xd trace}.\\
Note that $\lambda_{\overline{0}}$ is an usual right integral for the Hopf algebra $\mathcal{U}_{\overline{0}}$. 
%Denote by $\Lambda$ the right cointegral of $\mathcal{U}_{\overline{0}}$ such that $\lambda_{\overline{0}}(\Lambda)=1$.
We define a family of $\mathbb{C}$-linear forms $\{\tr^{\overline{\alpha}}\}_{\overline{\alpha} \in G}$ on $\mathcal{U}^{\sigma}$ determined by 
\begin{equation*}
\tr^{\overline{\alpha}}(x):=\lambda_{\overline{\alpha}}(G_{\overline{\alpha}}x)\quad \text{for}\ x \in \mathcal{U}_{\overline{\alpha}}
\end{equation*}
where $G_{\overline{\alpha}}=\sigma\phi_{0}|_{k_{i}^{\ell}=\xi^{\ell\alpha_i}}$ for $i=1, 2$, i.e. $G_{\overline{\alpha}}=\xi^{-\ell \alpha_1}\sigma k_{2}^{-2} \mod k_{i}^{\ell}-\xi^{\ell\alpha_i}$. 
This family determines a $G$-trace by proposition below.
\begin{Pro}\label{xd trace}
The family $\{\tr^{\overline{\alpha}}\}_{\overline{\alpha} \in G}$ above is a $G$-trace for the unimodular finite type Hopf $G$-coalgebra $\mathcal{U}^{\sigma}=\{\mathcal{U}_{\overline{\alpha}}\}_{\overline{\alpha} \in G}$.
\end{Pro}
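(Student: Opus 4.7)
The plan is to verify directly the two defining axioms of a $G$-trace: for every $\overline{\alpha} \in G$ and $x, y \in \mathcal{U}_{\overline{\alpha}}$, (i) $\tr^{\overline{\alpha}}(xy) = \tr^{\overline{\alpha}}(yx)$, and (ii) $\tr^{-\overline{\alpha}}(S_{\overline{\alpha}}(x)) = \tr^{\overline{\alpha}}(x)$. This is the Hopf $G$-coalgebra analogue of the classical fact that, for a unimodular pivotal finite-dimensional Hopf algebra with right integral $\lambda$ and pivot $g$, the functional $x \mapsto \lambda(gx)$ is a symmetric $S$-invariant trace; the bulk of the work will be to check the hypotheses of the corresponding general construction of Virelizier in \cite{Vire02}.

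The preliminary step is to verify that $\{G_{\overline{\alpha}}\}_{\overline{\alpha}\in G}$ is a pivot for the Hopf $G$-coalgebra $\mathcal{U}^{\sigma}$: each $G_{\overline{\alpha}}$ should be a group-like element of $\mathcal{U}_{\overline{\alpha}}$ satisfying $S_{\overline{\alpha}}^{2}(x) = G_{\overline{\alpha}} x G_{\overline{\alpha}}^{-1}$ and $\Delta_{\overline{\alpha},\overline{\beta}}(G_{\overline{\alpha}+\overline{\beta}}) = G_{\overline{\alpha}} \otimes G_{\overline{\beta}}$. All three properties descend from the pivotality of $\phi_{0}^{\sigma} = \sigma\phi_{0} \in \widehat{\mathcal{U}^{H}}^{\sigma}$ recorded in Section \ref{bosonization}, by applying the projections $p_{\overline{\alpha}}\colon \mathcal{U}^{\sigma} \twoheadrightarrow \mathcal{U}_{\overline{\alpha}}$. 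A short computation using $S^{\sigma}(\phi_{0}) = \phi_{0}^{-1}$ and $S^{\sigma}(\sigma) = \sigma$ also yields $S^{\sigma}(\sigma\phi_{0}) = (\sigma\phi_{0})^{-1}$ and hence $S_{\overline{\alpha}}(G_{\overline{\alpha}}) = G_{-\overline{\alpha}}^{-1}$, which is exactly the ingredient needed for (ii).

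With these data in hand together with unimodularity (Proposition \ref{U_0 modular}), condition (i) reduces to the identity $\lambda_{\overline{\alpha}}(G_{\overline{\alpha}} ab) = \lambda_{\overline{\alpha}}(G_{\overline{\alpha}} ba)$: this is the $G$-coalgebra version of the classical cyclicity of the integral-times-pivot functional on a unimodular pivotal Hopf algebra, and it follows by combining the $G$-integral axiom $(\lambda_{\overline{\alpha}}\otimes\Id)\Delta_{\overline{\alpha},\overline{0}} = \lambda_{\overline{\alpha}}1_{\overline{0}}$ with the pivotal identity for $G_{\overline{\alpha}}$. Condition (ii) is the analogue of $\lambda\circ S = \lambda$: one rewrites $G_{-\overline{\alpha}}S_{\overline{\alpha}}(x) = S_{\overline{\alpha}}(xG_{\overline{\alpha}}^{-1})$ using $S_{\overline{\alpha}}(G_{\overline{\alpha}}) = G_{-\overline{\alpha}}^{-1}$ and anti-multiplicativity of the antipode, then invokes the $G$-integral antipode identity of \cite{Vire02} and the cyclicity just established to conclude $\lambda_{-\overline{\alpha}}(G_{-\overline{\alpha}}S_{\overline{\alpha}}(x)) = \lambda_{\overline{\alpha}}(G_{\overline{\alpha}}x)$.

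The main obstacle I anticipate is checking that the relative normalizations of the family $\{\lambda_{\overline{\alpha}}\}$ forced by the $G$-integral axiom are the correct ones to make both the in-component cyclicity of step (i) and the cross-component antipode identity of step (ii) hold simultaneously. This coupling between the different $\mathcal{U}_{\overline{\alpha}}$ is the non-trivial input that unimodularity provides in Virelizier's framework; once imported, both $G$-trace axioms follow mechanically from the pivotal and antipodal computations above.
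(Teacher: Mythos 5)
Your proposal is correct and follows essentially the same route as the paper: the paper likewise invokes Theorem 4.2 of \cite{Vire02} for the unimodular finite type Hopf $G$-coalgebra — namely $\lambda_{\overline{\alpha}}(xy)=\lambda_{\overline{\alpha}}\bigl(S_{-\overline{\alpha}}S_{\overline{\alpha}}(y)x\bigr)$, $\lambda_{-\overline{\alpha}}\bigl(S_{\overline{\alpha}}(x)\bigr)=\lambda_{\overline{\alpha}}\bigl(G_{\overline{\alpha}}^{2}x\bigr)$ and $S_{-\overline{\alpha}}S_{\overline{\alpha}}(x)=G_{\overline{\alpha}}xG_{\overline{\alpha}}^{-1}$ — and then carries out the same short manipulations with the pivot that you describe, implicitly using $S_{\overline{\alpha}}(G_{\overline{\alpha}})=G_{-\overline{\alpha}}^{-1}$. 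The one imprecision is in your step (i): the in-component cyclicity $\lambda_{\overline{\alpha}}(G_{\overline{\alpha}}ab)=\lambda_{\overline{\alpha}}(G_{\overline{\alpha}}ba)$ does not follow from the bare $G$-integral axiom combined with pivotality, but is precisely the first identity of that theorem (this is where unimodularity is used), which you in any case intend to import from Virelizier's framework.
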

\begin{proof}
We see that $\mathcal{U}^{\sigma}=\{\mathcal{U}_{\overline{\alpha}}\}_{\overline{\alpha} \in G}$ is an unimodular finite type Hopf $G$-coalgebra. By Theorem 4.2 \cite{Vire02} for $\mathcal{U}^{\sigma}$ one gets
\begin{align*}
\lambda_{\overline{\alpha}}(xy)&=\lambda_{\overline{\alpha}}\left(S_{-\overline{\alpha}}S_{\overline{\alpha}}(y)x\right),\\
\lambda_{-\overline{\alpha}}\left(S_{\overline{\alpha}}(x)\right)&=\lambda_{\overline{\alpha}}\left(G_{\overline{\alpha}}^{2}x\right)\ \text{and}\\
S_{-\overline{\alpha}}S_{\overline{\alpha}}(x)&=G_{\overline{\alpha}}xG_{\overline{\alpha}}^{-1} \quad \text{for}\ x, y \in \mathcal{U}_{\overline{\alpha}}.
\end{align*}
By the definition of $\{\tr^{\overline{\alpha}}\}_{\overline{\alpha} \in G}$ we have 
\begin{align*}
\tr^{\overline{\alpha}}(yx)&=\lambda_{\overline{\alpha}}(G_{\overline{\alpha}}yx)=\lambda_{\overline{\alpha}}\left(S_{-\overline{\alpha}}S_{\overline{\alpha}}(x)G_{\overline{\alpha}}y\right)\\
&=\lambda_{\overline{\alpha}}(G_{\overline{\alpha}}xy)=\tr^{\overline{\alpha}}(xy).
\end{align*}
Furthermore, for $x\in \mathcal{U}_{\overline{\alpha}}$
\begin{align*}
\lambda_{-\overline{\alpha}}\left(S_{\overline{\alpha}}(x)\right)&=\lambda_{-\overline{\alpha}}\left(S_{\overline{\alpha}}(x)S_{\overline{\alpha}}(G_{\overline{\alpha}})G_{-\overline{\alpha}}\right)\\
&=\lambda_{-\overline{\alpha}}\left(S_{\overline{\alpha}}(G_{\overline{\alpha}}x)G_{-\overline{\alpha}}\right)\\
&=\lambda_{-\overline{\alpha}}\left(S_{\overline{\alpha}}S_{-\overline{\alpha}}(G_{-\overline{\alpha}})S_{\overline{\alpha}}(G_{\overline{\alpha}}x)\right)\\
&=\lambda_{-\overline{\alpha}}\left(G_{-\overline{\alpha}}S_{\overline{\alpha}}(G_{\overline{\alpha}}x)\right)\\
&=\tr^{-\overline{\alpha}}\left(S_{\overline{\alpha}}(G_{\overline{\alpha}}x)\right)
\end{align*}
and $\lambda_{\overline{\alpha}}\left(G_{\overline{\alpha}}^{2}x\right)=\tr^{\overline{\alpha}}(G_{\overline{\alpha}}x)$ so $\tr^{-\overline{\alpha}} (S_{\overline{\alpha}}(x)) = \tr^{\overline{\alpha}}(x)$.
This implies that the family $\tr =(\tr^{\overline{\alpha}})_{ \overline{\alpha}\in G}$ is a $G$-trace for $\mathcal{U}^{\sigma}$.
\end{proof}
Note that, since $S_{-\overline{\alpha}}S_{\overline{\alpha}}(G_{\overline{\alpha}})=G_{\overline{\alpha}}$ for $x \in \mathcal{U}_{\overline{\alpha}}$ then 
\begin{equation*}
\lambda_{\overline{\alpha}}(G_{\overline{\alpha}}x)= \lambda_{\overline{\alpha}}(S_{-\overline{\alpha}}S_{\overline{\alpha}}(G_{\overline{\alpha}}) x)= \lambda_{\overline{\alpha}}(x G_{\overline{\alpha}}).
\end{equation*}
Thus we also have $\tr^{\overline{\alpha}}(x)=\lambda_{\overline{\alpha}}(x G_{\overline{\alpha}})$ for $x \in \mathcal{U}_{\overline{\alpha}}$.
\subsection{Discrete Fourier transform}
For a (partial) map $f:\ \mathbb{C}^{n} \rightarrow \mathbb{C}$ we define $t_i(f)$ by $t_i(f)(h_1, ..., h_n)=f(h_1, ..., h_i+1, ..., h_n)$ for $1\leq i\leq n$.
Let $\mathcal{L}_{\overline{\alpha}}=\{(\alpha_1, ..., \alpha_n)+\mathbb{Z}^n \}$ be the lattice of $\mathbb{C}^n$ corresponding to $\overrightarrow{\alpha}=\overline{\alpha}=(\overline{\alpha}_1, ..., \overline{\alpha}_n)\in (\mathbb{C}/\mathbb{Z})^n$.
A function $f(h_1, ..., h_n)\in \mathscr{C}^{\omega}(h_1,...,h_n)$ is called $\ell$-periodic in $h_i$ on the lattice $\mathcal{L}_{\overline{\alpha}}$ if it satisfies $f_{\mid_{\overline{\alpha}}}=t_{i}^{\ell}\left(f_{\mid_{\overline{\alpha}}}\right)$ where $f_{\mid_{\overline{\alpha}}}:=f_{\mid_{\mathcal{L}_{\overline{\alpha}}}}$. A function $f(h_1, ..., h_n)\in \mathscr{C}^{\omega}(h_1,...,h_n)$ is $\ell$-periodic on $\mathcal{L}_{\overline{\alpha}}$ if it is in all variables on $\mathcal{L}_{\overline{\alpha}}$.
The functions $\{\xi^{mh_i}\}_{m\in \mathbb{Z}}^{i=1, ..., n}$ are $\ell$-periodic and $\xi^{\ell h_i}-\xi^{\ell \alpha_i}$ are zero on $\overline{\alpha}$. Let $I$ be the ideal in the ring $R=\mathbb{C}[\xi^{\pm h_1}, ..., \xi^{\pm h_n}]$ generated by $\xi^{\ell h_i}-\xi^{\ell \alpha_i}$ for $1 \leq i \leq n$. Then an element of $R/I$ define a $\ell$-periodic map in all variable on $\mathcal{L}_{\overline{\alpha}}$.
\begin{Pro}[Discrete Fourier transform] \label{key}
Let $f=f(h_1, ..., h_n)\in \mathscr{C}^{\omega}(h_1,...,h_n)$ be a $\ell$-periodic function on $\mathcal{L}_{\overline{\alpha}}$. Then there is an unique element $\mathcal{F}_{\overrightarrow{\alpha}}(f)\in R/I$ which coincides with $f$ on $\mathcal{L}_{\overline{\alpha}}$ and is given by
\begin{equation*}
\mathcal{F}_{\overrightarrow{\alpha}}(f)=\sum_{m_1, ..., m_n=0}^{\ell-1}a_{m_1 ... m_n}\xi^{m_1h_1+...+m_nh_n}.
\end{equation*}
The coefficients $a_{m_1 ... m_n}$ (Fourier coefficients) are determined by
\begin{equation*}
a_{m_1 ... m_n}=\frac{1}{\ell^n}\sum_{i_1, ..., i_n=0}^{\ell-1}\xi^{-m_1(\alpha_1+i_1)-...-m_n(\alpha_n+i_n)}f(\alpha_1+i_1,..., \alpha_n+i_n).
\end{equation*}
\end{Pro}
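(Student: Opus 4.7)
The plan is to identify $R/I$ with the space of $\ell$-periodic functions on $\mathcal{L}_{\overline{\alpha}}$ via the evaluation map, and then invert this map by the classical discrete Fourier transform on $(\mathbb{Z}/\ell\mathbb{Z})^n$. The uniqueness and the explicit formula for $a_{m_1\ldots m_n}$ then both drop out from the orthogonality relations for $\ell$-th roots of unity.

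First I would observe that, as a $\mathbb{C}$-vector space, $R/I$ has the basis $\{\xi^{m_1h_1+\ldots+m_nh_n}\mid 0\leq m_j\leq \ell-1\}$, so $\dim_{\mathbb{C}}(R/I)=\ell^n$; this is clear because the relations $\xi^{\ell h_i}=\xi^{\ell\alpha_i}$ allow one to reduce any exponent of $\xi^{h_i}$ modulo $\ell$. Dually, an element of $R/I$ defines an $\ell$-periodic map on $\mathcal{L}_{\overline{\alpha}}$, so its restriction to $\mathcal{L}_{\overline{\alpha}}$ is determined by its values on the $\ell^n$ representatives $(\alpha_1+i_1,\ldots,\alpha_n+i_n)$ with $0\leq i_j\leq \ell-1$. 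This produces a linear map
\begin{equation*}
\mathrm{ev}:\ R/I\longrightarrow \mathbb{C}^{\ell^n},\quad g\mapsto \bigl(g(\alpha_1+i_1,\ldots,\alpha_n+i_n)\bigr)_{0\leq i_j\leq \ell-1}.
\end{equation*}

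Next I would compute the matrix of $\mathrm{ev}$ in the basis above: the entry indexed by $(m_1,\ldots,m_n)$ and $(i_1,\ldots,i_n)$ is $\xi^{m_1(\alpha_1+i_1)+\ldots+m_n(\alpha_n+i_n)}$. Up to the overall scalar factor $\xi^{m_1\alpha_1+\ldots+m_n\alpha_n}$ (which is an invertible diagonal twist), this is the tensor product of $n$ copies of the $\ell\times \ell$ discrete Fourier matrix $(\xi^{mi})_{0\leq m,i\leq \ell-1}$. Since $\xi$ is a primitive $\ell$-th root of unity, the standard orthogonality relation
\begin{equation*}
\sum_{i=0}^{\ell-1}\xi^{i(m-m')}=\ell\,\delta_{m,m'}\quad\text{for }0\leq m,m'\leq \ell-1
\end{equation*}
shows that the DFT matrix is invertible with inverse $\frac{1}{\ell}(\xi^{-mi})$, and hence the tensor product is invertible too. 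Therefore $\mathrm{ev}$ is an isomorphism $R/I\xrightarrow{\sim}\mathbb{C}^{\ell^n}$.

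Now given the $\ell$-periodic $f$, the tuple $\bigl(f(\alpha_1+i_1,\ldots,\alpha_n+i_n)\bigr)_{i}$ lies in $\mathbb{C}^{\ell^n}$, so there is a unique element $\mathcal{F}_{\overrightarrow{\alpha}}(f)\in R/I$ mapping to it, proving both existence and uniqueness of a representative in $R/I$ agreeing with $f$ on $\mathcal{L}_{\overline{\alpha}}$. Finally, to extract the stated formula I would apply $\mathrm{ev}^{-1}$ to the evaluation tuple: writing $\mathcal{F}_{\overrightarrow{\alpha}}(f)=\sum_m a_{m_1\ldots m_n}\xi^{m_1h_1+\ldots+m_nh_n}$ and evaluating, one gets a linear system whose inversion via the formula above yields precisely
\begin{equation*}
a_{m_1\ldots m_n}=\frac{1}{\ell^n}\sum_{i_1,\ldots,i_n=0}^{\ell-1}\xi^{-m_1(\alpha_1+i_1)-\ldots-m_n(\alpha_n+i_n)}f(\alpha_1+i_1,\ldots,\alpha_n+i_n).
\end{equation*}
No step here is a serious obstacle; the only thing to be a little careful about is checking that the tensor factorisation is genuine and that $I$ really does impose independent relations so that $\dim(R/I)=\ell^n$, but both follow from the fact that $\mathbb{C}[\xi^{\pm h_i}]/(\xi^{\ell h_i}-\xi^{\ell\alpha_i})$ is $\ell$-dimensional with the obvious basis.
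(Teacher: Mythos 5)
Your proof is correct and follows essentially the same route as the paper: both express the restriction of $f$ in the basis $\{\xi^{m_1h_1+\ldots+m_nh_n}\}$ and invert the resulting evaluation (DFT-type) system using the orthogonality relation $\sum_{k=0}^{\ell-1}\xi^{k(i-j)}=\ell\delta_{j}^{i}$. The only cosmetic difference is that you treat all $n$ variables at once via the evaluation isomorphism $R/I\xrightarrow{\sim}\mathbb{C}^{\ell^n}$ and a tensor product of one-variable DFT matrices, whereas the paper writes out the $n=1$ case with the explicit matrix $A$ and its inverse and then appeals to induction on the variables.
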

\begin{proof}
We consider first the function $f(h_1) \in \mathscr{C}^{\omega}(h_1)$ is $\ell$-periodic on $\mathcal{L}_{\overline{\alpha}_1}$ for $\overline{\alpha}_1\in \mathbb{C}/\mathbb{Z}$ which is denoted by $f_{|_{\overline{\alpha}_1}}$.  
The set of such functions is a $\ell$-dimensional vector space.
The family $\{\xi^{m_1 h_1}\}_{m_1=0}^{\ell -1}$ of linearly independent $\ell$-periodic functions on $\mathcal{L}_{\overline{\alpha}_1}$ is a basis of this space, so we can write 
\begin{equation*}
f_{|_{\overline{\alpha}_1}}=\sum_{m=0}^{\ell-1}a_{m_1}\xi^{m_1h_1} .
\end{equation*}
To determine $(a_{m_1})_{m_1}$ we evaluate the function at $\alpha_1+i_1$ for $i_1=0,..., \ell -1$, we have a linear system of $\ell$ variables $a_{m_1}$ with $m_1=0, .., \ell-1$
\begin{equation*}
\sum_{m_1=0}^{\ell-1}a_{m_1}\xi^{m_1(\alpha_1+i_1)}=f(\alpha_1+i_1) \ \text{for}\ i_1=0,..., \ell-1.
\end{equation*}
The matrix of this linear system is 
$$A=\begin{bmatrix} 
1 & \xi^{\alpha_1} & \cdots & \xi^{(\ell-1)\alpha_1} \\
1 & \xi^{\alpha_1+1} & \cdots & \xi^{(\ell-1)(\alpha_1+1)} \\ 
 & \cdots &\cdots & \cdots\\
1 & \xi^{\alpha_1+k} & \cdots & \xi^{(\ell-1)(\alpha_1+k)} \\ 
 & \cdots &\cdots & \cdots\\
1 & \xi^{\alpha_1+\ell-1} & \cdots & \xi^{(\ell-1)(\alpha_1+\ell-1)}
\end{bmatrix}.$$
Note that $\sum_{k=0}^{\ell-1}\xi^{k(i-j)}=\ell \delta_{j}^{i}$, so we have
$$A^{-1}=\dfrac{1}{\ell}\begin{bmatrix} 
1 & 1 & \cdots & 1 \\
\xi^{-\alpha_1} & \xi^{-(\alpha_1+1)} & \cdots & \xi^{-(\alpha_1+\ell-1)} \\ 
 \vdots& \vdots &\cdots & \vdots\\
\xi^{-(\ell-1)\alpha_1} & \xi^{-(\ell-1)(\alpha_1+1)} & \cdots & \xi^{-(\ell-1)(\alpha_1+\ell-1)} 
\end{bmatrix}.$$
This implies that $a_{m_1}=\frac{1}{\ell}\sum_{i_1=0}^{\ell-1}\xi^{-m_1(\alpha_1+i_1)}f(\alpha_1+k_1)$ for $m_1=0, .., \ell-1$.
Then by induction on $i$ for $1\leq i \leq n$ we have a similar affirmation for the $\ell$-periodic functions on $\mathcal{L}_{\overline{\alpha}}$ with $\overline{\alpha}\in \left(\mathbb{C}/\mathbb{Z}\right)^n$ .
\end{proof}
Denote $\mathcal{U}_{\otimes\overrightarrow{\alpha}}:=\mathcal{U}_{\overline{\alpha}_1}\otimes ... \otimes \mathcal{U}_{\overline{\alpha}_n}$ for $\overrightarrow{\alpha}\in \left((\mathbb{C}/\mathbb{Z})^{2}\right)^{n}$ in which $\overline{\alpha}_j=(\overline{\alpha}_{1j}, \overline{\alpha}_{2j})\in (\mathbb{C}/\mathbb{Z})^{2}$ and $\mathcal{U}_{\otimes\overrightarrow{\alpha}}^{0}$ the subalgebra of $\mathcal{U}_{\otimes \overline{\alpha}}$ generated by $k_{i,j}^{\pm 1}=\xi^{\pm h_{i,j}}$ for $i=1, 2$ and $j=1,...,n$ (see Equation \eqref{Eq hij}).
\begin{HQ}\label{key1}
Let $f=f(h_{i,j})\in \mathscr{C}^{\omega}(h_{i,j})$ be a $\ell$-periodic function on $\mathcal{L}_{\overline{\alpha}}$. Then the unique element of $\mathcal{U}_{\otimes \overrightarrow{\alpha}}^{0}$ which coincides with $f$ on $\mathcal{L}_{\overline{\alpha}}$ and is given by
\begin{equation*}
\mathcal{F}_{\overrightarrow{\alpha}}(f)=\sum_{i_1, ..., i_n,\ 
j_1, ..., j_n=0}^{\ell-1}a_{i_1 ... i_nj_1 ... j_n}\prod_{s=1}^{n}k_{1,s}^{i_s}k_{2,s}^{j_s} \in \mathcal{U}_{\otimes \overrightarrow{\alpha}}^{0}
\end{equation*}
where
\begin{multline*}
a_{i_1 ... i_nj_1 ... j_n}=\frac{1}{\ell^{2n}}\sum_{s_1, ..., s_n,\ 
t_1, ..., t_n=0}^{\ell-1}\xi^{-\sum_{m=1}^{n}i_m(\alpha_{1m}+s_m)+j_m(\alpha_{2m}+t_m)}.\\
f(\alpha_{11}+s_1,\alpha_{21}+t_1,...,\alpha_{1n}+s_n, \alpha_{2n}+t_n).
\end{multline*}
\end{HQ}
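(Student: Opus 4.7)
The plan is to deduce Corollary \ref{key1} directly from Proposition \ref{key} by applying it to the $2n$ variables $\{h_{i,j}\}_{i=1,2}^{j=1,\ldots,n}$ and then interpreting the resulting element of the polynomial quotient ring as an element of the Cartan subalgebra $\mathcal{U}_{\otimes \overrightarrow{\alpha}}^{0}$.

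First I would set up the correspondence. Reindex the $2n$-tuple $(h_{1,1},h_{2,1},\ldots,h_{1,n},h_{2,n})$ as a single vector of variables in $\mathbb{C}^{2n}$, and let $\overrightarrow{\alpha}\in ((\mathbb{C}/\mathbb{Z})^2)^n$ determine the lattice $\mathcal{L}_{\overline{\alpha}}\subset \mathbb{C}^{2n}$ via the components $\overline{\alpha}_{ij}$. Let $R=\mathbb{C}[\xi^{\pm h_{i,j}}]$ and $I$ be the ideal generated by $\xi^{\ell h_{i,j}}-\xi^{\ell \alpha_{ij}}$ for $i=1,2$ and $j=1,\ldots,n$. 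The algebra homomorphism $R/I \to \mathcal{U}_{\otimes \overrightarrow{\alpha}}^{0}$ sending $\xi^{h_{i,j}}\mapsto k_{i,j}$ is an isomorphism since both sides have dimension $\ell^{2n}$ with the monomials $\prod_{s=1}^{n}k_{1,s}^{i_s}k_{2,s}^{j_s}$ forming a basis, where $0\leq i_s,j_s\leq \ell-1$.

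Next I would apply Proposition \ref{key} with $n$ replaced by $2n$. It produces a unique $\ell$-periodic element $\mathcal{F}_{\overrightarrow{\alpha}}(f)\in R/I$ agreeing with $f$ on $\mathcal{L}_{\overline{\alpha}}$, written as
\begin{equation*}
\mathcal{F}_{\overrightarrow{\alpha}}(f)=\sum_{i_1,\ldots,i_n,\, j_1,\ldots,j_n=0}^{\ell-1} a_{i_1\ldots i_n j_1\ldots j_n}\,\xi^{\sum_{s=1}^{n}(i_s h_{1,s}+j_s h_{2,s})},
\end{equation*}
with Fourier coefficients given by the analogous averaging formula over the lattice. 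Transporting through the isomorphism above, $\xi^{i_s h_{1,s}+j_s h_{2,s}}$ becomes $k_{1,s}^{i_s}k_{2,s}^{j_s}$, producing the stated formula.

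The Fourier coefficient expression then follows verbatim: using $\sum_{k=0}^{\ell-1}\xi^{k(a-b)}=\ell\delta_{a,b}$ for integers $a,b\bmod \ell$, inverting the $2n$-variable Vandermonde-type system yields
\begin{equation*}
a_{i_1\ldots i_n j_1\ldots j_n}=\frac{1}{\ell^{2n}}\sum_{s_1,\ldots,s_n,\,t_1,\ldots,t_n=0}^{\ell-1}\xi^{-\sum_{m=1}^{n}(i_m(\alpha_{1m}+s_m)+j_m(\alpha_{2m}+t_m))}f(\alpha_{11}+s_1,\ldots,\alpha_{2n}+t_n),
\end{equation*}
as claimed. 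Uniqueness is inherited from Proposition \ref{key}, since an element of $\mathcal{U}_{\otimes \overrightarrow{\alpha}}^{0}$ that vanishes on the whole lattice $\mathcal{L}_{\overline{\alpha}}$ corresponds to a class in $R/I$ whose associated function on the finite set $(\mathbb{Z}/\ell)^{2n}$ is identically zero, hence is zero. There is no real obstacle here: the only thing to check carefully is that the multi-variable product structure is compatible with the tensor product structure of $\mathcal{U}_{\otimes \overrightarrow{\alpha}}^{0}$, which is immediate because the $k_{i,j}$ with different $j$ lie in commuting tensor factors and the $k_{1,j},k_{2,j}$ commute within each factor.
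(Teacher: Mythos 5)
Your argument is correct and follows essentially the same route as the paper: apply Proposition \ref{key} in the $2n$ variables $h_{i,j}$ and then identify $\xi^{h_{i,j}}$ with $k_{i,j}$ inside $\mathcal{U}_{\otimes \overrightarrow{\alpha}}^{0}$, the coefficient formula and uniqueness being inherited directly from that proposition. The extra care you take with the isomorphism $R/I\simeq \mathcal{U}_{\otimes \overrightarrow{\alpha}}^{0}$ is a harmless elaboration of what the paper leaves implicit.
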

\begin{proof}
By Proposition \ref{key} we have
\begin{equation*}
\mathcal{F}_{\overrightarrow{\alpha}}(f)=\sum_{i_1, ..., i_n,\ 
j_1, ..., j_n=0}^{\ell-1}a_{i_1 ... i_nj_1 ... j_n}\xi^{\sum_{s=1}^{n}i_s h_{1,s}+j_s h_{2,s}}.
\end{equation*}
Since $\xi^{h_{i,j}}=k_{i,j}$ for $i=1,2$ and $j=1,..., n$ then
\begin{equation*}
\mathcal{F}_{\overrightarrow{\alpha}}(f)=\sum_{i_1, ..., i_n,\ 
j_1, ..., j_n=0}^{\ell-1}a_{i_1 ... i_nj_1 ... j_n}\prod_{s=1}^{n}k_{1,s}^{i_s}k_{2,s}^{j_s} \in \mathcal{U}_{\otimes \overrightarrow{\alpha}}^{0}.
\end{equation*}
Proposition \ref{key} gives the formula determining the coefficients $a_{i_1 ... i_nj_1 ... j_n}$.
\end{proof}
\begin{Exem}
The function $\mathcal{K}=\xi^{-h_1 \otimes h_2 -h_2 \otimes h_1 - 2h_2 \otimes h_2}$ is $\ell$-periodic on $\mathcal{L}_{\overline{0}}$ and we have
\begin{equation}\label{F(K)}
\mathcal{F}_{\overrightarrow{0}}(\mathcal{K})=\dfrac{1}{\ell^2}\sum_{i_1, i_2, j_1, j_2=0}^{\ell -1}\xi^{i_1j_2+i_2j_1-2i_1i_2}k_{1}^{i_1}k_{2}^{j_1}\otimes k_{1}^{i_2}k_{2}^{j_2}\in \mathcal{U}_{\overline{0}}\otimes \mathcal{U}_{\overline{0}}.
\end{equation}
Indeed, by Corollary \ref{key1} one has
\begin{equation*}
\mathcal{F}_{\overrightarrow{0}}(\mathcal{K})=\sum_{i_1, i_2, j_1, j_2=0}^{\ell -1}a_{i_1 i_2 j_1 j_2}k_{1}^{i_1}k_{2}^{j_1}\otimes k_{1}^{i_2}k_{2}^{j_2}.
\end{equation*}
The coefficients $a_{i_1 i_2 j_1 j_2}$ are computed as below
{\allowdisplaybreaks
\begin{align*}
a_{i_1 i_2 j_1 j_2}&=\dfrac{1}{\ell^4}\sum_{s_1, s_2, t_1, t_2=0}^{\ell -1}\xi^{-i_1s_1-j_1t_1-i_2s_2-j_2t_2}\xi^{-s_1t_2-t_1s_2-2t_1t_2}\\
&=\dfrac{1}{\ell^4}\sum_{t_1, t_2=0}^{\ell -1}\xi^{-j_1t_1-j_2t_2-2t_1t_2}\sum_{s_1, s_2=0}^{\ell -1}\xi^{-i_1s_1-i_2s_2-s_1t_2-t_1s_2}\\
&=\dfrac{1}{\ell^4}\sum_{t_1, t_2=0}^{\ell -1}\xi^{-j_1t_1-j_2t_2-2t_1t_2}\sum_{s_1, s_2=0}^{\ell -1}\xi^{-(i_1+t_2)s_1-(i_2+t_1)s_2}\\
&=\dfrac{1}{\ell^4}\sum_{t_1, t_2=0}^{\ell -1}\xi^{-j_1t_1-j_2t_2-2t_1t_2}\sum_{s_1=0}^{\ell -1}\xi^{-(i_1+t_2)s_1}\sum_{ s_2=0}^{\ell -1}\xi^{-(i_2+t_1)s_2}\\
&=\dfrac{1}{\ell^4}\sum_{t_1, t_2=0}^{\ell -1}\xi^{-j_1t_1-j_2t_2-2t_1t_2}\ell \delta_{i_1+t_2\, \text{mod}\, \ell\mathbb{Z}}^{0}\ell \delta_{i_2+t_1\, \text{mod}\, \ell\mathbb{Z}}^{0}\\
&=\dfrac{1}{\ell^2}\sum_{t_1=0}^{\ell -1}\xi^{-j_1t_1}\delta_{i_2+t_1\, \text{mod}\, \ell\mathbb{Z}}^{0}\sum_{t_2=0}^{\ell -1}\xi^{-j_2t_2-2t_1t_2}\delta_{i_1+t_2\, \text{mod}\, \ell\mathbb{Z}}^{0}\\
&=\dfrac{1}{\ell^2}\sum_{t_1=0}^{\ell -1}\xi^{-j_1t_1}\delta_{i_2+t_1\, \text{mod}\, \ell\mathbb{Z}}^{0}\xi^{-j_2(-i_1)-2t_1(-i_1)}\\
&=\dfrac{1}{\ell^2}\xi^{-j_1(-i_2)}\xi^{-j_2(-i_1)-2(-i_2)(-i_1)}\\
&=\dfrac{1}{\ell^2}\xi^{j_1i_2+j_2i_1-2i_1i_2}.
\end{align*}}
\end{Exem}
For $\overline{\alpha}_i\in (\mathbb{C}/\mathbb{Z})^{2}$ we call $\widehat{\mathcal{U}^{H}}_{\overline{\alpha}_i}^{ \text{per}}$ the subalgebra of $\widehat{\mathcal{U}^{H}}^{\sigma}$ generated by elements forms $u=\sum_{j} f_{ij}(h_1,h_2)w_j $
where $w_j \in \sigma^{m}\mathfrak{B}_+  \mathfrak{B}_-$ for $m=0,1$ and $f_{ij}(h_1,h_2) \in \mathscr{C}^{\omega}(h_1,h_2)$ are $\ell$-periodic on $\mathcal{L}_{\overline{\alpha}_i}$. Denote $\widehat{\mathcal{U}^{H}}_{\otimes\overrightarrow{\alpha}}^{ \text{per}}=\widehat{\mathcal{U}^{H}}_{\overline{\alpha}_1}^{ \text{per}}\otimes ...\otimes \widehat{\mathcal{U}^{H}}_{\overline{\alpha}_n}^{ \text{per}}$.
We extend linearly $\mathcal{F}_{\overrightarrow{\alpha}}$ to a map $\widehat{\mathcal{U}^{H}}_{\otimes\overrightarrow{\alpha}}^{\text{per}} \rightarrow \mathcal{U}_{\otimes \overrightarrow{\alpha}}$ by the rule $\sum_{m} f_{m}(h_{1,i},h_{2,j})w_m  \mapsto \sum\mathcal{F}_{\overrightarrow{\alpha}}(f_{m}(h_{1,i},h_{2,j}))w_m$.
\begin{Lem}
The map $\mathcal{F}_{\overrightarrow{\alpha}}:\ \widehat{\mathcal{U}^{H}}_{\otimes\overrightarrow{\alpha}}^{\text{per}}\ \rightarrow \mathcal{U}_{\otimes \overrightarrow{\alpha}}$ is an algebra map.
\end{Lem}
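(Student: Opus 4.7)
The plan is to verify the two conditions of an algebra homomorphism: unit preservation and multiplicativity. Unit preservation is immediate, since the constant function $1$ is $\ell$-periodic on every lattice $\mathcal{L}_{\overrightarrow{\alpha}}$ and is obviously the unique element of $R/I$ agreeing with itself there. By bilinearity, it suffices to check multiplicativity on elementary tensors $u=f(h)w$ and $v=g(h)w'$ with $w,w'\in \sigma^{m}\mathfrak{B}_{+}\mathfrak{B}_{-}$ and $f,g$ $\ell$-periodic on $\mathcal{L}_{\overrightarrow{\alpha}}$ (and then extend to the tensor factorisation over the components $j=1,\dots,n$, which proceeds exactly in parallel since the lattice is a product).

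The key technical ingredients are two direct consequences of the uniqueness statement of Proposition \ref{key}. \textbf{Multiplicativity on the Cartan part}: if $f,g\in \mathscr{C}^{\omega}$ are $\ell$-periodic on $\mathcal{L}_{\overrightarrow{\alpha}}$ then $\mathcal{F}_{\overrightarrow{\alpha}}(f)\mathcal{F}_{\overrightarrow{\alpha}}(g)$ lies in $R/I$ and agrees with $fg$ on the lattice, hence equals $\mathcal{F}_{\overrightarrow{\alpha}}(fg)$ by uniqueness. \textbf{Shift compatibility}: since $\mathcal{L}_{\overrightarrow{\alpha}}$ is invariant under integer translations, for any $n\in\mathbb{Z}^{2n}$ the element $\mathcal{F}_{\overrightarrow{\alpha}}(f)$ evaluated on the shifted variables (equivalently, obtained by $k_{i,j}\mapsto \xi^{-n_{i,j}}k_{i,j}$) is again in $R/I$ and agrees with the shifted function $f(h-n)$ on the lattice, so $\mathcal{F}_{\overrightarrow{\alpha}}(f(h-n))=\mathcal{F}_{\overrightarrow{\alpha}}(f)(h-n)$. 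Furthermore the structure constants $c^{\kappa}_{ij}(h)$ appearing in the PBW expansion $w_{i}w'_{j}=\sum_{\kappa}w_{\kappa}c^{\kappa}_{ij}(h)$ of Remark \ref{cs} already lie in $\mathbb{C}[k_{1}^{\pm},k_{2}^{\pm}]$ (the defining relations of $\mathcal{U}_{\xi}\mathfrak{sl}(2|1)$ produce only Laurent polynomials in the $k_{i}$, and the $\sigma$ signs are scalars), so they are fixed by $\mathcal{F}_{\overrightarrow{\alpha}}$ modulo $I$.

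With these in hand, the verification is a direct computation. Using $wg(h)=g(h-|w|)w$ (from Remark \ref{cs}) and then expanding $ww'=\sum_{\kappa}w_{\kappa}c_{\kappa}(k)$ and moving each $c_{\kappa}(k)$ to the left of $w_{\kappa}$, one obtains
\begin{equation*}
uv=\sum_{\kappa}f(h)\,g(h-|w|)\,c_{\kappa}(h-|w_{\kappa}|)\,w_{\kappa},
\end{equation*}
where every Cartan coefficient is $\ell$-periodic on $\mathcal{L}_{\overrightarrow{\alpha}}$. Applying $\mathcal{F}_{\overrightarrow{\alpha}}$ term by term and using the two key ingredients yields
\begin{equation*}
\mathcal{F}_{\overrightarrow{\alpha}}(uv)=\sum_{\kappa}\mathcal{F}_{\overrightarrow{\alpha}}(f)\,\mathcal{F}_{\overrightarrow{\alpha}}(g)(h-|w|)\,c_{\kappa}(h-|w_{\kappa}|)\,w_{\kappa}.
\end{equation*}
On the other hand, in $\mathcal{U}_{\otimes\overrightarrow{\alpha}}$ one computes $\mathcal{F}_{\overrightarrow{\alpha}}(u)\mathcal{F}_{\overrightarrow{\alpha}}(v)=\mathcal{F}_{\overrightarrow{\alpha}}(f)\,w\,\mathcal{F}_{\overrightarrow{\alpha}}(g)\,w'$, and using the same commutation rule $w\cdot p(k)=p(h-|w|)\cdot w$ for Laurent polynomials $p$, together with the PBW expansion of $ww'$, gives the identical expression. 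The two agree, proving multiplicativity.

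The only real obstacle is careful bookkeeping: one must check that after moving all shifts and Cartan polynomials to one side, the functions one applies $\mathcal{F}_{\overrightarrow{\alpha}}$ to remain $\ell$-periodic on $\mathcal{L}_{\overrightarrow{\alpha}}$ (which follows from the integrality of all the weights $|w_{i}|_{k}\in\mathbb{Z}$) and that the PBW structure constants really do lie in $\mathbb{C}[k_{1}^{\pm},k_{2}^{\pm}]$. The conceptual engine throughout is the uniqueness in Proposition \ref{key}.
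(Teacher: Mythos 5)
Your proof is correct and takes essentially the same route as the paper: multiplicativity on the Cartan part via the uniqueness statement of Proposition \ref{key}, combined with the integer weight-shift commutation $wg(h_1,h_2)=g(h_1-\vert w\vert_1,h_2-\vert w\vert_2)w$ from Remark \ref{cs} (your sign is the consistent one; the paper writes the shift with the opposite sign, which is immaterial). The only difference is that you also re-expand $ww'$ in the PBW basis and note that the structure constants lie in $\mathbb{C}[k_1^{\pm 1},k_2^{\pm 1}]$, hence are fixed by $\mathcal{F}_{\overrightarrow{\alpha}}$ modulo $I$ — a well-definedness point the paper's proof leaves implicit by keeping $w_1w_2$ unexpanded.
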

\begin{proof}
By the unicity in Proposition \ref{key}, as $fg\vert_{\mathcal{L}_{\overline{\alpha}}}=\mathcal{F}_{\overrightarrow{\alpha}}(f)\mathcal{F}_{\overrightarrow{\alpha}}(g)$ we have
\begin{equation*}
\mathcal{F}_{\overrightarrow{\alpha}}(fg)=\mathcal{F}_{\overrightarrow{\alpha}}(f)\mathcal{F}_{\overrightarrow{\alpha}}(g)
\end{equation*}
for the $\ell$-periodic functions $f, g$ on $\mathcal{L}_{\overline{\alpha}}$.\\
Consider the elements $f(h_1, h_2)w_1, g(h_1, h_2)w_2\in \widehat{\mathcal{U}^{H}}_{\overline{\alpha}_i}^{\text{per}}$ where $f,g$ are $\ell$-periodic on $\mathcal{L}_{\overline{\alpha}_i}$ and $w_1, w_2\in  \sigma^{m}\mathfrak{B}_+  \mathfrak{B}_-$ for $m=0, 1$. By Remark \ref{cs} one has
\begin{align*}
(f(h_1, h_2)w_1)(g(h_1, h_2) w_2)&=f(h_1, h_2)(w_1 g(h_1, h_2))w_2\\
&=f(h_1, h_2)g(h_1+\vert w_1\vert_1, h_2+\vert w_1\vert_2)w_2
\end{align*}
where $(\vert w_1\vert_1, \vert w_1\vert_2)$ is the weight of $w_1$ for $(h_1,h_2)$. So we have
\begin{align*}
\mathcal{F}_{\overrightarrow{\alpha}}(fw_1gw_2)&=\mathcal{F}_{\overrightarrow{\alpha}}(f(h_1, h_2)g(h_1+\vert w_1\vert_1, h_2+\vert w_1\vert_2)w_1w_2)\\
&=\mathcal{F}_{\overrightarrow{\alpha}}(fg(h_1+\vert w_1\vert_1, h_2+\vert w_1\vert_2))w_1w_2\\
&=\mathcal{F}_{\overrightarrow{\alpha}}(f)\mathcal{F}_{\overrightarrow{\alpha}}(g(h_1+\vert w_1\vert_1, h_2+\vert w_1\vert_2))w_1w_2\\
&=\mathcal{F}_{\overrightarrow{\alpha}}(f)w_1 \mathcal{F}_{\overrightarrow{\alpha}}(g(h_1+\vert w_1\vert_1-\vert w_1\vert_1, h_2+\vert w_1\vert_2-\vert w_1\vert_2))w_2\\
&=\mathcal{F}_{\overrightarrow{\alpha}}(f)w_1 \mathcal{F}_{\overrightarrow{\alpha}}(g)w_2\\
&=\mathcal{F}_{\overrightarrow{\alpha}}(fw_1) \mathcal{F}_{\overrightarrow{\alpha}}(gw_2).
\end{align*}
\end{proof}
%%%%%
\begin{Lem}\label{coproduct and Fourier transform}
Let $\overline{\beta}, \overline{\gamma}\in (\mathbb{C}/\mathbb{Z})^2$ and let $\overrightarrow{\alpha}=\overline{\alpha}=\overline{\beta}+ \overline{\gamma}$. Assume $f(h_1, h_2)$ is a $\ell$-periodic entire function on $\mathcal{L}_{\overline{\alpha}}$. Then $\Delta(f)$ is $\ell$-periodic on $\mathcal{L}_{(\overline{\beta}, \overline{\gamma})}$ and
\begin{equation*}
\Delta_{\overline{\beta}, \overline{\gamma}}\mathcal{F}_{\overrightarrow{\alpha}}(f)=\mathcal{F}_{(\overline{\beta}, \overline{\gamma})}\left(\Delta(f)\right).
\end{equation*}
\end{Lem}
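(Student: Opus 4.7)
My plan is to reduce the statement to the uniqueness clause of Proposition \ref{key} (resp.\ Corollary \ref{key1}): both elements live in $\mathcal{U}_{\otimes(\overline{\beta}, \overline{\gamma})}^{0}$, and if they coincide with $\Delta(f)$ on the lattice $\mathcal{L}_{(\overline{\beta},\overline{\gamma})}$, they must agree.

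First I would verify the periodicity claim. Since $\Delta(h_i)=h_{i,1}+h_{i,2}$, the image $\Delta(f)$ is the entire function on $\mathbb{C}^4$ given by $(h_{1,1},h_{2,1},h_{1,2},h_{2,2})\mapsto f(h_{1,1}+h_{1,2},\,h_{2,1}+h_{2,2})$. For a lattice point $(\varphi,\psi)\in\mathcal{L}_{(\overline{\beta},\overline{\gamma})}$ the sum $\varphi+\psi$ lies in $\mathcal{L}_{\overline{\beta}+\overline{\gamma}}=\mathcal{L}_{\overline{\alpha}}$, and shifting any one of the four coordinates by $\ell$ shifts exactly one argument of $f$ by $\ell$ along $\mathcal{L}_{\overline{\alpha}}$; the assumed $\ell$-periodicity of $f$ in both of its variables on $\mathcal{L}_{\overline{\alpha}}$ therefore transfers to $\ell$-periodicity of $\Delta(f)$ on $\mathcal{L}_{(\overline{\beta},\overline{\gamma})}$ in all four variables.

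Next, using Proposition \ref{key}, I would write
\begin{equation*}
\mathcal{F}_{\overrightarrow{\alpha}}(f)=\sum_{m_1,m_2=0}^{\ell-1}a_{m_1 m_2}\,k_1^{m_1}k_2^{m_2}\in\mathcal{U}_{\overline{\alpha}}^{0}.
\end{equation*}
Because $\Delta$ is an algebra map with $\Delta(k_i)=k_i\otimes k_i$, projecting to $\mathcal{U}_{\overline{\beta}}\otimes\mathcal{U}_{\overline{\gamma}}$ gives
\begin{equation*}
\Delta_{\overline{\beta},\overline{\gamma}}\mathcal{F}_{\overrightarrow{\alpha}}(f)=\sum_{m_1,m_2=0}^{\ell-1}a_{m_1 m_2}\,k_1^{m_1}k_2^{m_2}\otimes k_1^{m_1}k_2^{m_2}\in\mathcal{U}_{\overline{\beta}}^{0}\otimes\mathcal{U}_{\overline{\gamma}}^{0}.
\end{equation*}
This element is in the subalgebra $\mathcal{U}_{\otimes(\overline{\beta},\overline{\gamma})}^{0}$ to which Corollary \ref{key1} applies.

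Finally I would check that this element evaluates to $\Delta(f)$ on $\mathcal{L}_{(\overline{\beta},\overline{\gamma})}$ and invoke uniqueness. At a point $(\varphi,\psi)\in\mathcal{L}_{(\overline{\beta},\overline{\gamma})}$, the character $k_{i,1}\mapsto \xi^{\varphi(h_i)},\ k_{i,2}\mapsto\xi^{\psi(h_i)}$ sends the right side of the previous display to
\begin{equation*}
\sum_{m_1,m_2}a_{m_1 m_2}\,\xi^{m_1(\varphi(h_1)+\psi(h_1))+m_2(\varphi(h_2)+\psi(h_2))}=\mathcal{F}_{\overrightarrow{\alpha}}(f)(\varphi+\psi)=f(\varphi+\psi),
\end{equation*}
where the last equality uses that $\mathcal{F}_{\overrightarrow{\alpha}}(f)$ coincides with $f$ on $\mathcal{L}_{\overline{\alpha}}$ (Proposition \ref{key}) and $\varphi+\psi\in\mathcal{L}_{\overline{\alpha}}$. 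But $f(\varphi+\psi)$ is exactly $\Delta(f)(\varphi,\psi)$. Hence $\Delta_{\overline{\beta},\overline{\gamma}}\mathcal{F}_{\overrightarrow{\alpha}}(f)$ is an element of $\mathcal{U}_{\otimes(\overline{\beta},\overline{\gamma})}^{0}$ which restricts to $\Delta(f)$ on $\mathcal{L}_{(\overline{\beta},\overline{\gamma})}$, and by the uniqueness part of Corollary \ref{key1} it must equal $\mathcal{F}_{(\overline{\beta},\overline{\gamma})}(\Delta(f))$. I expect no serious obstacle; the only point of care is bookkeeping between the algebraic element in $\mathcal{U}_{\otimes(\overline{\beta},\overline{\gamma})}^{0}$ and the associated function on $\mathcal{L}_{(\overline{\beta},\overline{\gamma})}$ under the identification $k_{i,j}\leftrightarrow\xi^{h_{i,j}}$, which is exactly what the two uniqueness statements are designed to handle.
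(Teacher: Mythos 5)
Your proof is correct, but it follows a genuinely different route from the paper. The paper proves the identity by brute force: it writes out the Fourier coefficients $a_{m_1m_2}$ of $\mathcal{F}_{\overrightarrow{\alpha}}(f)$, then computes the Fourier coefficients $b_{n_1n_2n_3n_4}$ of $\Delta(f)$ on $\mathcal{L}_{(\overline{\beta},\overline{\gamma})}$ directly, and uses the substitution $s=i_1+j_1$, $t=i_2+j_2$ together with the orthogonality relation $\sum_{i}\xi^{(n_3-n_1)i}=\ell\,\delta_{n_3}^{n_1}$ to show that $b_{n_1n_2n_3n_4}=0$ off the diagonal and $b_{n_1n_2n_1n_2}=a_{n_1n_2}$. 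You instead bypass the coefficient computation entirely: you observe that $\Delta_{\overline{\beta},\overline{\gamma}}\mathcal{F}_{\overrightarrow{\alpha}}(f)$ lies in $\mathcal{U}_{\otimes(\overline{\beta},\overline{\gamma})}^{0}$ because $\Delta(k_i)=k_i\otimes k_i$, check that it restricts to $\Delta(f)$ on the lattice (using $\varphi+\psi\in\mathcal{L}_{\overline{\alpha}}$ and the interpolation property of $\mathcal{F}_{\overrightarrow{\alpha}}(f)$), and invoke the uniqueness clause of Proposition \ref{key}/Corollary \ref{key1}. This is legitimate — the uniqueness is explicitly part of those statements, and it is exactly the device the paper itself uses to prove that $\mathcal{F}_{\overrightarrow{\alpha}}$ is an algebra map — and your argument is shorter and more conceptual; you also spell out the $\ell$-periodicity of $\Delta(f)$ on $\mathcal{L}_{(\overline{\beta},\overline{\gamma})}$, which the paper asserts without comment. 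What the paper's computation buys in exchange is the explicit diagonal form of the transformed element (vanishing of the off-diagonal Fourier coefficients), which your uniqueness argument delivers only implicitly through the grouplike image of the $k_i$'s.
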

\begin{proof}
First, by Proposition \ref{key} we have
\begin{equation*}
\mathcal{F}_{\overrightarrow{\alpha}}(f)=\sum_{m_1,m_2=0}^{\ell -1}a_{m_1 m_2}\xi^{m_1 h_1 +m_2 h_2}
\end{equation*}
where $a_{m_1 m_2}=\dfrac{1}{\ell^2}\sum_{i_1, i_2=0}^{\ell -1}\xi^{-m_1 (\alpha_1+i_1)-m_2 (\alpha_2+i_2)}f(\alpha_1+i_1, \alpha_2+i_2)$.
Then 
\begin{equation*}
\Delta_{\overline{\beta}, \overline{\gamma}}\mathcal{F}_{\overrightarrow{\alpha}}(f)=\sum_{m_1,m_2=0}^{\ell -1}a_{m_1 m_2}\xi^{m_1 (h_1\otimes 1+1\otimes h_1) +m_2 (h_2\otimes 1+1\otimes h_2)}.
\end{equation*}
Second, the algebra homomorphism $\Delta$ gives us $\Delta f(h_1, h_2)=f(h_1\otimes 1+1\otimes h_1, h_2\otimes 1+1\otimes h_2)$.
Applying the discrete Fourier transform one gets
\begin{equation*}
\mathcal{F}_{(\overline{\beta}, \overline{\gamma})}\left(\Delta(f)\right)=\sum_{n_1,n_2,n_3,n_4=0}^{\ell -1}b_{n_1n_2n_3n_4}\xi^{n_1(h_1\otimes 1)+n_2(h_2\otimes 1)+n_3(1\otimes h_1)+n_4(1\otimes h_2)}
\end{equation*}
where the Fourier coefficient 
\begin{multline*}
b_{n_1n_2n_3n_4}=\dfrac{1}{\ell^4}\sum_{i_1,i_2,j_1,j_2=0}^{\ell -1}\xi^{-n_1(\beta_1+i_1)-n_2(\beta_2+i_2)-n_3(\gamma_1+j_1)-n_4(\gamma_2+j_2)}.\\
f(\beta_1+i_1+\gamma_1+j_1, \beta_2+i_2+\gamma_2+j_2).
\end{multline*}
By $\overline{\alpha}=\overline{\beta}+ \overline{\gamma}$, one has
\begin{multline*}
b_{n_1n_2n_3n_4}=\dfrac{1}{\ell^4}\xi^{-n_1\beta_1-n_2\beta_2-n_3\gamma_1-n_4\gamma_2}\sum_{i_1,i_2,j_1,j_2=0}^{\ell -1}\xi^{-n_1i_1-n_2i_2-n_3j_1-n_4j_2}.\\
f(\alpha_1+i_1+j_1, \alpha_2+i_2+j_2).
\end{multline*}
Since $f(h_1, h_2)$ is $\ell$-periodic on $\mathcal{L}_{\overline{\alpha}}$, setting $s=i_1+j_1$ and $t=i_2+j_2$ then 
\begin{multline*}
b_{n_1n_2n_3n_4}=\dfrac{1}{\ell^4}\xi^{-n_1\beta_1-n_2\beta_2-n_3\gamma_1-n_4\gamma_2}\sum_{i_1,i_2,s,t=0}^{\ell -1}f(\alpha_1+s, \alpha_2+t).\\
\xi^{-n_1i_1-n_2i_2-n_3(s-i_1)-n_4(t-i_2)}.
\end{multline*}
\begin{multline*}
b_{n_1n_2n_3n_4}=\dfrac{1}{\ell^4}\xi^{-n_1\beta_1-n_2\beta_2-n_3\gamma_1-n_4\gamma_2}\sum_{s,t=0}^{\ell -1}f(\alpha_1+s, \alpha_2+t)\xi^{-n_3 s-n_4 t}.\\
\sum_{i_1,i_2=0}^{\ell -1}\xi^{(n_3-n_1)i_1+(n_4-n_2)i_2}.
\end{multline*}
Since 
\begin{align*}
\sum_{i_1,i_2=0}^{\ell -1}\xi^{(n_3-n_1)i_1+(n_4-n_2)i_2}&=\sum_{i_1=0}^{\ell -1}\xi^{(n_3-n_1)i_1}\sum_{i_2=0}^{\ell -1}\xi^{(n_4-n_2)i_2}\\
&=\ell^2\delta_{n_3}^{n_1}\delta_{n_4}^{n_2}
\end{align*}
then $b_{n_1n_2n_3n_4}=0$ if $(n_1, n_2) \neq (n_3, n_4)$ and when $(n_1, n_2) = (n_3, n_4)$ then $b_{n_1n_2n_1n_2}$ is computed
\begin{align*}
b_{n_1n_2n_1n_2}&=\dfrac{1}{\ell^2}\xi^{-n_1(\beta_1+\gamma_1)-n_2(\beta_2+\gamma_2)}\sum_{s,t=0}^{\ell -1}f(\alpha_1+s, \alpha_2+t)\xi^{-n_1 s-n_2 t}\\
&=\dfrac{1}{\ell^2}\xi^{-n_1\alpha_1-n_2\alpha_2}\sum_{s,t=0}^{\ell -1}f(\alpha_1+s, \alpha_2+t)\xi^{-n_1 s-n_2 t}\\
&=\dfrac{1}{\ell^2}\sum_{s,t=0}^{\ell -1}\xi^{-n_1(\alpha_1+ s)-n_2(\alpha_2+ t)}f(\alpha_1+s, \alpha_2+t)\\
&=a_{n_1 n_2}.
\end{align*}
Hence 
\begin{align*}
\mathcal{F}_{(\overline{\beta}, \overline{\gamma})}\left(\Delta(f)\right)&=\sum_{n_1,n_2=0}^{\ell -1}b_{n_1n_2n_1n_2}\xi^{n_1(h_1\otimes 1)+n_2(h_2\otimes 1)+n_1(1\otimes h_1)+n_2(1\otimes h_2)}\\
&=\sum_{n_1,n_2=0}^{\ell -1}a_{n_1n_2}\xi^{n_1(h_1\otimes 1+1\otimes h_1)+n_2(h_2\otimes 1+1\otimes h_2)}\\
&=\Delta_{\overline{\beta}, \overline{\gamma}}\mathcal{F}_{\overrightarrow{\alpha}}(f).
\end{align*}
\end{proof}
\begin{rmq}\label{antipode and Fourier transform}
As $S(h_i)=-h_i$ for $i=1,2$, by the similar calculations as in Lemma \ref{coproduct and Fourier transform} then
\begin{equation*}
\mathcal{F}_{-\overrightarrow{\alpha}}S(f)=S_{\overline{\alpha}}\mathcal{F}_{\overrightarrow{\alpha}}(f).
\end{equation*}
\end{rmq}
A consequence of Lemma \ref{coproduct and Fourier transform} is that $\mathcal{R}^{\overline{0}}=R_1\check{\mathcal{R}}\mathcal{F}_{\overrightarrow{0}}(\mathcal{K})$ is the universal $R$-matrix of $\mathcal{U}_{\overline{0}}$ with $\mathcal{R}^{\overline{0}}=\mathcal{F}_{\overrightarrow{0}}(\mathcal{R}_q)$ is given by 
\begin{multline}\label{R matrix of U_0}
\mathcal{R}^{\overline{0}}=\frac{1}{\ell^2}R_1\sum_{i, i_1, i_2, j_1, j_2=0}^{\ell-1}\sum_{\rho, \delta=0}^{1}\frac{\{1\}^i(-\{1\})^{\rho+\delta}}{(i)_{\xi}!(\rho)_{\xi}!(\delta)_{\xi}!}\xi^{i_1j_2+i_2j_1-2i_1i_2}.\\
e_1^{i}e_3^{\rho}e_2^{\delta}k_1^{i_1}k_2^{j_1}\sigma^{\rho+\delta}\otimes f_1^{i}f_3^{\rho}f_2^{\delta}k_1^{i_2}k_2^{j_2}
\end{multline}
where $R_1=\dfrac{1}{2}\left( 1\otimes 1+\sigma \otimes 1 + 1 \otimes\sigma-\sigma \otimes \sigma \right)$ (see Section \ref{bosonization}). Indeed the relations satisfied by the $\mathcal{R}$-matrix $\mathcal{R}_q$ (see \cite{SMkVNt91}, \cite{HYa94}) translate to the relations for $\mathcal{R}^{\overline{0}}$.

\subsection{Invariant of $3$-manifolds of Hennings type}
Let $L$ be a framed link in $S^3$ consisting of $n$ components (still denote by $L$ its link  diagram), $M$ be a $3$-manifold obtained by surgery along the link $L$. Let $\omega$ be an element of the cohomology group $H^{1}(M, G)$ (see Section 2 \cite{FcNgBp14}).
The value of the invariant of link $\mathcal{J}_{L}$ is in $\xi^{Q_{L}(h)}\mathcal{U}^{\sigma\otimes n}$. Let $\overline{\alpha}_j=\omega(m_j)=(\overline{\alpha}_{1j}, \overline{\alpha}_{2j})$ here $m_j$ is a meridian of the $j$-th component of $L$. Denote $\overline{\alpha}=(\overline{\alpha}_1, ..., \overline{\alpha}_n)$. Since $\omega$ is an element of the cohomology group $H^{1}(M, G)$ it vanishes on longitudes of $L$, this implies the relation $\sum_{j=1}^{n}\lk_{ij}\overline{\alpha}_j=\overline{0}, \ \forall i=1,...,n$. We have
\begin{Pro}
The function $f(h_{1,i},h_{2,j})=\xi^{Q_{L}(h)}$ is $\ell$-periodic on $\mathcal{L}_{\overline{\alpha}}$. 
\end{Pro}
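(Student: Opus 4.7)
The plan is to verify $\ell$-periodicity by showing that for each variable $h_{k,m}$ ($k\in\{1,2\}$, $1\leq m\leq n$) the increment $Q_L(h+\ell e_{k,m})-Q_L(h)$ lies in $\ell\mathbb{Z}$ whenever $h\in\mathcal{L}_{\overline{\alpha}}$, since then $\xi^{\Delta Q_L}=1$. Because $Q_L$ is a quadratic polynomial in the $h_{i,j}$, Taylor's formula gives
\begin{equation*}
Q_L(h+\ell e_{k,m})-Q_L(h)=\ell\,\partial_{h_{k,m}}Q_L(h)+\tfrac{\ell^{2}}{2}\partial_{h_{k,m}}^{2}Q_L,
\end{equation*}
so the entire computation reduces to an explicit calculation of the first and second partial derivatives of $Q_L$, followed by an evaluation on the lattice.

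Concretely I would first record, from $Q_{ij}(h)=-h_{1,i}h_{2,j}-h_{2,i}h_{1,j}-2h_{2,i}h_{2,j}$ and the symmetry $\lk_{ij}=\lk_{ji}$, the two computations
\begin{equation*}
\partial_{h_{1,m}}Q_L=-2\sum_{j}\lk_{mj}h_{2,j},\qquad \partial^{2}_{h_{1,m}}Q_L=0
\end{equation*}
and
\begin{equation*}
\partial_{h_{2,m}}Q_L=-2\sum_{j}\lk_{mj}h_{1,j}-4\sum_{j}\lk_{mj}h_{2,j},\qquad \partial^{2}_{h_{2,m}}Q_L=-4\lk_{mm}.
\end{equation*}
Plugging these into the Taylor expansion, the increment $\Delta Q_L$ becomes, in each of the two cases, a $\mathbb{Z}$-linear combination of $\ell\sum_j\lk_{mj}h_{k',j}$ (for $k'\in\{1,2\}$) plus a term of the form $c\,\ell^{2}\lk_{mm}$.

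Finally I would evaluate on $\mathcal{L}_{\overline{\alpha}}$ by writing $h_{k',j}=\alpha_{k'j}+n_{k'j}$ with $n_{k'j}\in\mathbb{Z}$. The $n_{k'j}$-contributions are integer multiples of $2\ell$, and the $\ell^{2}\lk_{mm}$ term is an integer multiple of $\ell$; the only non-trivial piece is $-2\ell\sum_j\lk_{mj}\alpha_{k'j}$, and here the crucial hypothesis enters: the vanishing of $\omega$ on longitudes gives $\sum_j\lk_{mj}\overline{\alpha}_j=\overline{0}$ in $(\mathbb{C}/\mathbb{Z})^{2}$, i.e.\ $\sum_j\lk_{mj}\alpha_{k'j}\in\mathbb{Z}$ for $k'=1,2$. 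Multiplying by $2\ell$ lands in $\ell\mathbb{Z}$, so $\Delta Q_L\in\ell\mathbb{Z}$ and $\xi^{\Delta Q_L}=1$, as required.

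There is no real obstacle, just bookkeeping; the only point requiring minor care is handling the quadratic self-interaction in the $h_{2,m}\mapsto h_{2,m}+\ell$ shift, which produces the $\ell^{2}\lk_{mm}$ term. This is automatically harmless because $\ell^{2}\in\ell\mathbb{Z}$, but it must be included in order to see that the argument works on the full lattice (not merely at a single base point), and it explains why the factor $-2$ in $b_{22}=-2$ does not spoil the calculation.
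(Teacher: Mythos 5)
Your proof is correct and takes essentially the same route as the paper: both compute the increment of $Q_L$ when a single variable is shifted by $\ell$ (your Taylor-formula packaging is just a way of organizing that substitution) and then use the vanishing of $\omega$ on longitudes, $\sum_j\lk_{ij}\overline{\alpha}_j=\overline{0}$, to see that the increment lies in $\ell\mathbb{Z}$ on $\mathcal{L}_{\overline{\alpha}}$, so $\xi^{\Delta Q_L}=1$. If anything you are a bit more explicit than the paper for the $h_{2,m}$-shift, where the paper only remarks that ``the computation is similar'' and does not display the harmless $\ell^{2}\lk_{mm}$ term that you correctly account for.
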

\begin{proof}
We denote $h_{1,i}+\ell=1\otimes ...\otimes (h_1+\ell)\otimes ... \otimes 1$ where $h_1+\ell$ is in $i$-th position. We have
\begin{align*}
f(h_{1,i}+\ell,h_{2,j})&=\xi^{-\sum_{i,j=1}^{n}\lk_{ij}\left((h_1+\ell)_ih_{2,j}+h_{2,i}h_{1,j}+2h_{2,i}h_{2,j}\right)}\\
&=\xi^{\sum_{i,j=1}^{n}\lk_{ij}Q_{ij}(h)}\xi^{-\sum_{i,j=1}^{n}\lk_{ij}\ell h_{2,j}}\\
&=f(h_{1,i},h_{2,j})\xi^{-\sum_{i,j=1}^{n}\lk_{ij}\ell \alpha_{2j}}.
\end{align*}
The equalities $\sum_{j=1}^{n}\lk_{ij}\overline{\alpha}_j=\overline{0}$ imply that $\sum_{i,j=1}^{n}\lk_{ij} \alpha_{2j} \in \mathbb{Z}$. Hence we get $f(h_{1,i}+\ell,h_{2,j})=f(h_{1,i},h_{2,j})$. The computation is similar for the variables $h_{2,j}$.
\end{proof}
Lemma \ref{key} implies that $\mathcal{F}_{\overrightarrow{\alpha}}\left( \xi^{Q_{L}(h)}\right) \in \mathcal{U}_{\otimes \overrightarrow{\alpha}}$. We define 
\begin{equation}\label{gtri J sau bd fourier}
\mathcal{J}_{L}^{\omega}=\mathcal{F}_{\overrightarrow{\alpha}}\left(\mathcal{J}_{L}\right)\in \text{H\!H}_{0}(\mathcal{U}_{\otimes \overrightarrow{\alpha}})
\end{equation}
thanks to Theorem \ref{gtri J}.
Let $\theta_{\overline{0}}$ be the ribbon element of the small quantum group $\mathcal{U}_{\overline{0}}$. 
\begin{Lem}\label{l'integral sur twist non nul}
There exists a normalization of $(\lambda_{\overline{\alpha}})_{\overline{\alpha}\in G}$ such that $$\lambda_{\overline{0}}(\theta_{\overline{0}})=\lambda_{\overline{0}}(\theta_{\overline{0}}^{-1})=1.$$ 
\end{Lem}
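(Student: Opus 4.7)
The plan is to split the proof into three steps: non-vanishing of $\lambda_{\overline{0}}(\theta_{\overline{0}})$, non-vanishing of $\lambda_{\overline{0}}(\theta_{\overline{0}}^{-1})$, and the equality $\lambda_{\overline{0}}(\theta_{\overline{0}}) = \lambda_{\overline{0}}(\theta_{\overline{0}}^{-1})$. Once these are in hand, the unique right integral on the finite-dimensional unimodular Hopf algebra $\mathcal{U}_{\overline{0}}$ (cf. Proposition \ref{U_0 modular}) can be rescaled by $\lambda_{\overline{0}}(\theta_{\overline{0}})^{-1}$, and this single scalar propagates through the family $(\lambda_{\overline{\alpha}})_{\overline{\alpha}\in G}$ via the defining relation \eqref{definition of the integral}, yielding both normalizations simultaneously.

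For the non-vanishing, I would expand $\theta_{\overline{0}}$ via its definition \eqref{twist not sigma} using the explicit $R$-matrix $\mathcal{R}^{\overline{0}}$ from Equation \eqref{R matrix of U_0}, and write the result in a PBW basis of $\mathcal{U}_{\overline{0}}$. A standard fact for small quantum (super)groups is that the right integral $\lambda_{\overline{0}}$ is, up to scalar, the projection onto the coefficient of the top PBW monomial $e_1^{\ell-1}e_3 e_2 f_1^{\ell-1}f_3 f_2 k_1^{a_1}k_2^{a_2}$ (for some weight $(a_1, a_2)$ forced by unimodularity). The summand of $\mathcal{R}^{\overline{0}}$ with maximal $e$-weight in the first factor and maximal $f$-weight in the second factor, multiplied by $S$ and the pivotal, contributes a non-zero coefficient to this top monomial in $\theta_{\overline{0}}^{\pm 1}$; the same argument applies to the inverse after noting $\mathcal{R}^{-1}$ has the same triangular structure.

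For the equality, the natural strategy is to produce a $\mathbb{C}$-linear involution $\psi$ of $\mathcal{U}_{\overline{0}}$ with $\psi(\theta_{\overline{0}}) = \theta_{\overline{0}}^{-1}$ and $\lambda_{\overline{0}}\circ\psi = \lambda_{\overline{0}}$. A candidate is the composition of the antipode $S_{\overline{0}}$ (which preserves $\lambda_{\overline{0}}$ by unimodularity) with the algebra automorphism sending $\xi\mapsto \xi^{-1}$, $e_i\mapsto e_i$, $f_i\mapsto f_i$, $k_i\mapsto k_i^{-1}$; this automorphism sends $\mathcal{R}$ to $\tau^s(\mathcal{R})^{-1}$ and hence the twist to its inverse. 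One must check that this automorphism preserves $\lambda_{\overline{0}}$ up to a scalar that, after combining with $S_{\overline{0}}$ and using $S(\theta)=\theta$ (Proposition \ref{steta}), becomes $1$.

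The main obstacle is the third step: verifying $\lambda_{\overline{0}}(\theta_{\overline{0}}) = \lambda_{\overline{0}}(\theta_{\overline{0}}^{-1})$, since no formal consequence of unimodularity alone forces this equality. Controlling how the $\xi\mapsto\xi^{-1}$ automorphism interacts with the right cointegral (or equivalently, comparing the PBW coefficient of the top monomial in $\theta_{\overline{0}}$ and in $\theta_{\overline{0}}^{-1}$) is the delicate calculation; the non-vanishing in step one is then a byproduct of this explicit PBW expansion. Alternatively, one may argue categorically by interpreting $\lambda_{\overline{0}}(\theta_{\overline{0}}^{\pm 1})$ as the modified trace of the twist on the projective cover $P_{\mathbb{C}}$ and invoking its self-duality (Proposition \ref{U_0 modular}) together with $S(\theta)=\theta$ to match the two scalars.
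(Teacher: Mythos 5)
Your overall skeleton (explicit right integral picking out the top PBW coefficient, PBW expansion of the ribbon element, then rescale) matches the paper's strategy for the non-vanishing part: the paper writes down the right integral explicitly on the PBW basis (Proposition \ref{proposition determine the integral}) and expands $\theta_{\overline{0}}^{\pm 1}$ from $\mathcal{R}^{\overline{0}}$ in Appendix \ref{appendix2}. But the crux, as you yourself note, is the equality $\lambda_{\overline{0}}(\theta_{\overline{0}})=\lambda_{\overline{0}}(\theta_{\overline{0}}^{-1})$ --- without it no single rescaling can make both values $1$ --- and here your two proposed mechanisms do not close the gap. The map $\xi\mapsto\xi^{-1}$, $k_i\mapsto k_i^{-1}$ is not $\mathbb{C}$-linear (it is conjugate-linear/Galois-semilinear), so composing with $S_{\overline{0}}$ yields at best a relation of the form $\lambda_{\overline{0}}(\theta_{\overline{0}}^{-1})=c\,\overline{\lambda_{\overline{0}}(\theta_{\overline{0}})}$ with an undetermined scalar $c$ coming from how the semilinear map transforms the cointegral; pinning down that scalar amounts to the same explicit PBW computation you were trying to avoid, and a conjugation relation alone does not yield the exact equality needed (rescaling $\lambda$ multiplies both values by the same scalar, so equality, not just equality of moduli, is required). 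The categorical alternative is worse: self-duality $P_{\mathbb{C}}\simeq P_{\mathbb{C}}^{*}$ together with $S(\theta)=\theta$ (Proposition \ref{steta}) compares the twist on $P_{\mathbb{C}}$ with the twist on $P_{\mathbb{C}}^{*}$, i.e. $\theta$ with $\theta$, not $\theta$ with $\theta^{-1}$; and $\lambda_{\overline{0}}(\theta_{\overline{0}}^{\pm1})$ is not the modified trace of $\theta^{\pm1}$ on $P_{\mathbb{C}}$. In general, for ribbon Hopf algebras these two ``Gauss sums'' are genuinely different, so no soft argument of this kind can be expected to work.

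The paper resolves this by brute force (Lemma \ref{value of the integral in degree 0}, proved in Appendix \ref{appendix2}): it computes $\theta_{\overline{0}}^{-1}=m\circ\tau^{s}\circ(\Id\otimes S_{\overline{0}})(\mathcal{R}^{\overline{0}})(\phi_0^{\sigma})^{-1}$ and $\theta_{\overline{0}}=\phi_0^{\sigma}\,m\circ\tau^{s}\circ(S_{\overline{0}}^{2}\otimes\Id)(\mathcal{R}^{\overline{0}})$ in the PBW basis, applies the explicit integral, and observes that the Cartan sums $\sum\xi^{i_1j_2+i_2j_1-2i_1i_2}(\cdots)$ collapse via geometric-sum delta functions to the \emph{same} constant $\frac{\{1\}^{\ell+1}}{\ell(\ell-1)_{\xi}!}$, up to the factor $\eta'=\lambda_{\overline{0}}(f_2f_3f_1^{\ell-1}e_1^{\ell-1}e_3e_2k_2^{\ell-2})=-\xi\eta$, which is fixed by the super-commutation identities $f_2f_3f_1^{\ell-1}=f_1^{\ell-1}f_2f_3$ and $f_2f_3=-\xi f_3f_2$ (using the Serre relation). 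Note also that your non-vanishing step is not automatic from ``one summand hits the top monomial'': the coefficient of the top monomial is a sum over all Cartan exponents $(i_1,i_2,j_1,j_2)$ coming from $\mathcal{F}_{\overrightarrow{0}}(\mathcal{K})$, which a priori could cancel; its non-vanishing is exactly the outcome of the same delta-function collapse. So both the equality and the non-vanishing come out of one explicit computation that your proposal defers or replaces by arguments that would fail.
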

\begin{proof}
The proof is thanks to Lemma \ref{value of the integral in degree 0}.
\end{proof}
%We have the theorem.
\begin{The} \label{dl chinh1}
\begin{equation}\label{Invariant of 3-manifolds}
\mathcal{J}(M, \omega)=\bigotimes_{j=1}^{n}\tr^{\overline{\alpha}_j}\left(\mathcal{J}_{L}^{\omega}\right)
\end{equation}
is a topological invariant of the pairs $(M, \omega)$ where $n$ is the number of the components of the surgery link $L$.
\end{The}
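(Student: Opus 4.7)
The plan is to establish that $\mathcal{J}(M,\omega)$ is well-defined as a complex number and invariant under the two Kirby moves (stabilization by a $\pm 1$-framed unknot, and handle slide), from which the theorem follows by standard Kirby calculus. The well-definedness is already essentially set up by the preceding lemmas: Theorem \ref{gtri J} places $\mathcal{J}_L$ in $\xi^{Q_L(h)}\mathcal{U}^{\sigma\otimes n}/N_{Q_L}$; the cohomological condition $\sum_j \lk_{ij}\overline{\alpha}_j=\overline 0$ (coming from $\omega$ being a $1$-cocycle on $M$) is precisely what makes $\xi^{Q_L(h)}$ an $\ell$-periodic entire function on $\mathcal{L}_{\overrightarrow{\alpha}}$, so $\mathcal{F}_{\overrightarrow{\alpha}}$ applies and produces $\mathcal{J}_L^{\omega}\in \operatorname{H\!H}_0(\mathcal{U}_{\otimes\overrightarrow{\alpha}})$; the $G$-trace established in Proposition \ref{xd trace} then vanishes on commutators, so the formula \eqref{Invariant of 3-manifolds} depends only on the class of $\mathcal{J}_L^{\omega}$ in Hochschild homology.

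For the first Kirby move (K1), I would consider $L\sqcup U^{\pm 1}$ where $U^{\pm 1}$ is an unknotted $\pm 1$-framed component disjoint from $L$. The longitude of $U^{\pm 1}$ is $\pm$ its meridian, so the condition that $\omega$ vanishes on longitudes forces $\overline{\alpha}_{n+1}=\overline 0$ on the new component. The value of $\mathcal{J}$ on $U^{\pm 1}$ alone is (up to the factor $\xi^{\pm Q_{n+1,n+1}(h)}$ that is absorbed by $\mathcal{F}_{\overrightarrow{0}}$ and the $R$-matrix relations) the class of $\phi_0^{\sigma}\theta^{\pm 1}$ projected to $\mathcal{U}_{\overline 0}$; applying $\tr^{\overline 0}$ gives $\lambda_{\overline 0}(G_{\overline 0}\phi_0\theta_{\overline 0}^{\pm 1})$, which after simplification reduces via Lemma \ref{l'integral sur twist non nul} to $\lambda_{\overline 0}(\theta_{\overline 0}^{\pm 1})=1$. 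Thus the addition of $U^{\pm 1}$ multiplies by $1$ and the invariant is unchanged.

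For the handle slide (K2), sliding component $L_i$ over $L_j$ gives a link $L'$ whose linking matrix is transformed by $\lk'_{ik}=\lk_{ik}+\lk_{jk}$ (and symmetrically) and whose new meridian at position $i$ satisfies $\overline{\alpha}'_i=\overline{\alpha}_i+\overline{\alpha}_j$, so the cocycle condition is preserved. Algebraically, the universal invariant $\mathcal{J}_{L'}$ is obtained from $\mathcal{J}_L$ by replacing the $i$-th strand with two parallel copies carrying the comultiplication, and the Cartan quadratic form transforms by the same rule as the linking matrix. By Lemma \ref{coproduct and Fourier transform} we have the intertwining
$$\mathcal{F}_{(\overline{\alpha}_i,\overline{\alpha}_j)}\circ\Delta = \Delta_{\overline{\alpha}_i,\overline{\alpha}_j}\circ\mathcal{F}_{\overline{\alpha}_i+\overline{\alpha}_j}$$
on periodic entire functions, so the Fourier transform of the new Cartan contribution is exactly the $\Delta_{\overline{\alpha}_i,\overline{\alpha}_j}$-coproduct of the old one. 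Applying $\tr^{\overline{\alpha}'_i}\otimes\tr^{\overline{\alpha}_j}$ on the two strands and using the defining property of the right $G$-integral
$$(\lambda_{\overline{\alpha}_i}\otimes\operatorname{Id})\Delta_{\overline{\alpha}_i,\overline{\alpha}_j}=\lambda_{\overline{\alpha}_i+\overline{\alpha}_j}1_{\overline{\alpha}_j},$$
together with cyclicity of the trace to move the $G_{\overline{\alpha}}$ factor through, collapses the two parallel strands back into one and gives $\bigotimes_j\tr^{\overline{\alpha}_j}(\mathcal{J}_L^\omega)=\bigotimes_j\tr^{\overline{\alpha}'_j}(\mathcal{J}_{L'}^{\omega'})$.

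The main obstacle will be the handle-slide step. The difficulty is that in our setting the Cartan data lives in the topological completion $\widehat{\mathcal{U}^H}^\sigma$ \emph{before} the Fourier transform, while the integral property is stated in the finite-type Hopf $G$-coalgebra $\{\mathcal{U}_{\overline{\alpha}}\}$ \emph{after}. One must therefore carefully split $\mathcal{J}_L$, according to the decomposition $\mathcal{R}^\sigma = \sum a_i\otimes b_i\cdot\mathcal{K}$, into a non-Cartan part that is already in $\mathcal{U}^{\sigma\otimes n}$ and a Cartan-exponential part that only becomes a finite sum after $\mathcal{F}_{\overrightarrow{\alpha}}$, verify that the coproduct operations needed for the slide commute with $\mathcal{F}$ by Lemma \ref{coproduct and Fourier transform} and Remark \ref{antipode and Fourier transform}, and only then apply Virelizier's $G$-integral machinery component-wise. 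Unimodularity of $\mathcal{U}_{\overline 0}$ (Proposition \ref{U_0 modular}) is what permits using a single normalization of $(\lambda_{\overline{\alpha}})$ consistent across all components.
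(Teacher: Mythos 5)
Your proposal follows essentially the same route as the paper: Kirby calculus, with the blow-up/blow-down move handled by $\omega(m)=\overline{0}$ and the normalization $\lambda_{\overline{0}}(\theta_{\overline{0}}^{\pm 1})=1$ of Lemma \ref{l'integral sur twist non nul}, and the handle slide handled by comultiplying the element on the doubled strand, intertwining $\mathcal{F}_{\overrightarrow{\alpha}}$ with $\Delta$ via Lemma \ref{coproduct and Fourier transform}, and collapsing with the right $G$-integral identity $(\lambda_{\overline{\alpha}'}\otimes\Id)\Delta_{\overline{\alpha}',\overline{\beta}}=\lambda_{\overline{\alpha}'+\overline{\beta}}1_{\overline{\beta}}$ together with the $G$-trace of Proposition \ref{xd trace}. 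Only two small points differ from the paper: it is the component being slid \emph{over} whose element is comultiplied and whose label changes (the paper's relation is $\overline{\alpha}'+\overline{\beta}=\overline{\alpha}$, not an additive change on the sliding component), and the paper additionally uses the antipode (Remark \ref{antipode and Fourier transform}) to dispose of orientation reversals, which you invoke only implicitly.
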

\begin{rmq}
Usual quantum surgery invariants are renormalized thanks to the signature. There is no need of renormalisation here thanks to Lemma \ref{l'integral sur twist non nul}. 
\end{rmq}
%%%%%%%%%%%%
We use a result on the equivalence of $3$-manifolds obtained by surgery along a link to prove Theorem \ref{dl chinh1}, that is the theorem below.
\begin{The}[\cite{kirby78}]
Let $M_1$ and $M_2$ be oriented $3$-manifolds and $f :\ M_1 \rightarrow M_2$ be an orientation preserving diffeomorphism. Any two surgery presentations $L_1$ and $L_2$ of $M_1$ and $M_2$, respectively can be connected by a sequence of handle-slides, blow-up moves and blow-down moves such that the induced diffeomorphism between $M_1 = S_{L_1}^3$ and $M_2 = S_{L_2}^3$ is isotopic to $f$.
\end{The}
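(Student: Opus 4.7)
The plan is to prove Kirby's theorem by trading a 3-dimensional question (equivalence of surgery presentations) for a 4-dimensional question (equivalence of handle decompositions rel boundary), and then a 5-dimensional one (existence of a bounding cobordism between handle decompositions). I would first encode the surgery data 4-dimensionally. Given a framed link $L_i \subset S^3$ with components $L_i^{(1)},\dots,L_i^{(n_i)}$, form the compact smooth $4$-manifold $W_i$ obtained from $B^4$ by attaching one $2$-handle along each $L_i^{(k)}$ with framing prescribed by $L_i$. Then $\partial W_i \cong S^3_{L_i} = M_i$ canonically, and the given orientation-preserving diffeomorphism $f\colon M_1\to M_2$ allows one to form the closed, oriented $4$-manifold $W := W_1 \cup_{f} (-W_2)$.

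Next I would reduce to a cobordism-theoretic statement. The oriented $4$-dimensional cobordism group $\Omega_4^{SO} \cong \mathbb{Z}$ is detected by the signature, and the manifolds $\pm\mathbb{CP}^2$ have signatures $\pm 1$. Hence after a controlled connected sum $W \# a\mathbb{CP}^2 \# b\overline{\mathbb{CP}^2}$ (with $a, b \geq 0$ chosen to kill $\sigma(W)$) the resulting closed $4$-manifold bounds an oriented smooth $5$-manifold $X$. The crucial observation is that the connect summands $\pm\mathbb{CP}^2$, viewed through their standard handle decomposition, correspond precisely to blow-up/blow-down moves on the surgery link, so performing them is allowed by the statement. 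One should then realize $X$ as a relative cobordism between the two handle decompositions, namely those coming from $W_1 \natural (\text{blow-ups})$ on one side and $W_2 \natural (\text{blow-ups})$ on the other, both rel the common boundary $M_1 \cong_f M_2$.

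The heart of the proof is then Cerf theory. Pick Morse functions on $X$ adapted to these two handle structures and connect them by a generic $1$-parameter family $\{f_t\}$ of smooth functions $X \to [0,1]$. By Cerf's theorem such a path has only finitely many codimension-one singularities: handle slides (pairs of critical points at the same level exchange descending/ascending manifolds) and birth/death of cancelling handle pairs. Restricting the $1$-parameter family to a level set carrying the surgery data, each handle slide on the $4$-dimensional side descends to a handle slide on the Kirby diagram on $\partial = M$, and each birth/death of a cancelling $(2,3)$-handle pair amounts to introducing or removing an unknotted $\pm 1$-framed component unlinked from the rest, i.e.\ a blow-up or a blow-down. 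One must also simplify the generic Morse functions so that only index $0,1,2$ critical points are relevant (by handle trading and the fact that we are free to choose Morse functions without high-index critical points on a 4-dimensional handlebody), to ensure that everything reduces to moves visible on the 3-dimensional surgery diagram.

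The main obstacle is not really the $4$- and $5$-dimensional bookkeeping but rather the fine analysis in Cerf theory: ensuring that the generic $1$-parameter family can be chosen so that the only codimension-one degenerations are handle slides and $(k, k+1)$-birth/deaths, that higher codimension phenomena can be avoided, and that $(0,1)$- and $(3,4)$-cancellations can be absorbed without affecting the 3-dimensional trace. Equivalently, one must show that any two handle decompositions of a cobordism with fixed boundary are related by a finite sequence of isotopies, handle slides, and creation/annihilation of cancelling pairs — this is the content of Cerf's ``pseudo-isotopy implies isotopy'' theorem in dimension $\geq 5$, combined with a careful analysis of how such moves restrict to the boundary. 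Finally one checks that the sequence of moves produced in this way realizes the prescribed isotopy class of the diffeomorphism $f$, which follows because the cobordism $X$ carries the gluing data of $f$ by construction.
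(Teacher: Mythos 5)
The paper does not prove this statement: it is quoted from Kirby's 1978 article and used as a black box, so there is no internal proof to compare with, and I am judging your sketch against the argument in the literature. Your outline does follow the broad strategy of Kirby's original proof (surgery traces $W_i$, the closed double $W=W_1\cup_f(-W_2)$, killing the signature by blow-ups, Cerf theory), but it has a genuine gap at the pivotal step. Knowing that $W\#a\mathbb{CP}^2\#b\overline{\mathbb{CP}^2}$ bounds a $5$-manifold $X$ does not let you ``realize $X$ as a relative cobordism between the two handle decompositions'' in any way that feeds into Cerf theory: Cerf theory compares two Morse functions on the \emph{same} manifold by a generic path in the space of functions on it, so before it can be invoked you must first produce a diffeomorphism $W_1\natural(\text{blow-ups})\cong W_2\natural(\text{blow-ups})$ relative to the boundary identification $f$. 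Mere cobordance of the stabilized traces is far too weak for this (any two closed oriented $4$-manifolds of equal signature are cobordant, and very few are diffeomorphic), and this is exactly where the hard input of Kirby's proof lives: one blows up further so that the two intersection forms are odd, indefinite, and of the same rank and signature, hence isomorphic by the classification of indefinite unimodular forms; one performs surgery on the interior of $X$ to turn it into a simply connected h-cobordism rel boundary; and one then applies Wall's stabilization theorem, which says that h-cobordant simply connected compact $4$-manifolds become diffeomorphic after connected sum with copies of $S^2\times S^2$, tradeable for $\mathbb{CP}^2\#\overline{\mathbb{CP}^2}$ (i.e., for blow-ups) in the odd case. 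None of this appears in your sketch.

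Once that diffeomorphism is in hand, the Cerf-theoretic part is applied to a $1$-parameter family of functions on the single $4$-manifold $W_1'\cong W_2'$ (equivalently to a function on $W_1'\times[0,1]$), not to Morse functions on the null-cobordism $X$; your conflation of these two distinct $5$-dimensional objects is where the argument would break down if carried out literally. The remaining points you raise --- eliminating index $1$ and $3$ critical points, interpreting births/deaths and crossings of critical values as blow-ups/blow-downs and handle slides of the framed link, and checking that the construction realizes the isotopy class of $f$ (which is built into the gluing of $W$) --- are correctly identified and constitute the genuine technical content of Kirby's paper, but they rest on the missing stabilization step.
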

%\begin{The}[\cite{FcNgBp14}]
%Let $M_1$ and $M_2$ be oriented, closed $3$-manifolds containing framed graphs $T_1 \subset M_1$ and $T_2 \subset M_2$, respectively. Let $f : M_1 \rightarrow M_2$ be an orientation preserving diffeomorphism such that $f(T_1) = T_2$ as framed graphs. Let $L_i$ be a link in $S^3$ which is a surgery presentation of $M_i$ such that $T_i \subset S^3\setminus L_i$. There exists a sequence of handle-slides, blow-up moves and blow-down moves on the components of $L_1$ as well as handle slides moving an edge of $T_1$ over a component of $L_1$ and blow-up and blow-down moves around edges of $T_1$, transforming $L_1 \sqcup T_1$ into $L_2 \sqcup T_2$ and such that the induced diffeomorphism between $ M_1 = S_{L_1}^3$ and $M_2 = S_{L_2}^3$ is isotopic to $f$.
%\end{The}
 
\begin{proof}[Proof of Theorem \ref{dl chinh1}]
We need to show that $\mathcal{J}(M, \omega)$ is not change under two Kirby's moves.
In the case of handle slide (the second Kirby's move), we can assume that the algebraic element on the strands are already concentrated as illustrated in the first component of Figure \ref{Fig3} where $x\in \mathcal{U}_{\overline{\alpha}},\ y \in \mathcal{U}_{\overline{\beta}}$ are given by the Fourier transform discrete (see Equation \eqref{gtri J sau bd fourier}).
\begin{figure}
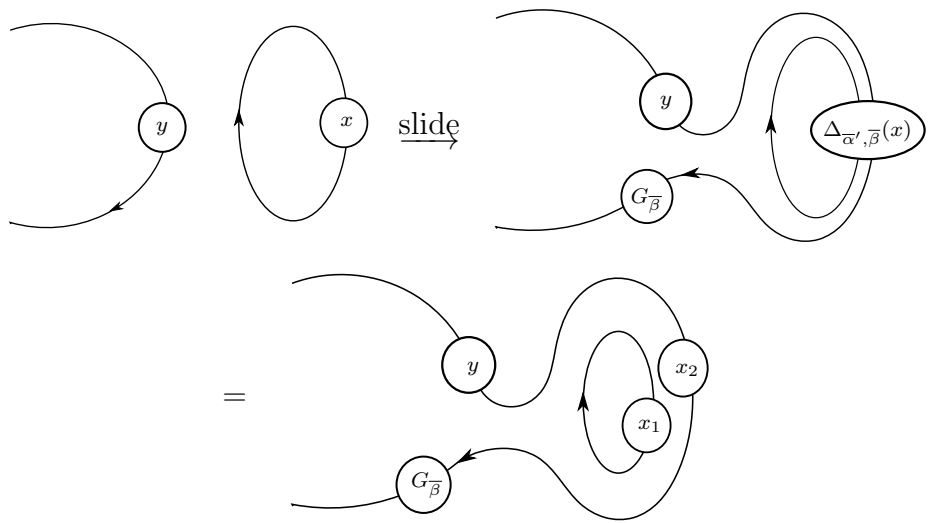

$$\epsh{Kirby11}{15ex} \put(-79,1){\ms{y}}  \put(-10,3){\ms{x}}  \quad \underrightarrow{\text{slide}} \quad \epsh{Kirby12}{17ex} \put(-99,11){\ms{y}} \put(-38,-1){\ms{\Delta_{\overline{\alpha}',\overline{\beta}}(x)}} \put(-110,-26){\ms{G_{\overline{\beta}}}} $$\\
$$= \quad \epsh{Kirby13}{18ex}\put(-90,13){\ms{y}} \put(-26,-9){\ms{x_1}} \put(-12,12){\ms{x_2}}\put(-111,-31){\ms{G_{\overline{\beta}}}}  $$
\caption{The second Kirby's move}
	\label{Fig3}
\end{figure}
The associated invariant of $3$-manifolds will be computed by $$\tr^{\overline{\alpha}}(x)y=\lambda_{\overline{\alpha}}(G_{\overline{\alpha}}x)y \quad x\in \mathcal{U}_{\overline{\alpha}}, y \in \mathcal{U}_{\overline{\beta}}.$$
After sliding, by the commutativity of the Fourier transform with the coproduct in Lemma \ref{coproduct and Fourier transform} and by the property of the element $\mathcal{R}$-matrix we replace $x$ by $\Delta_{\overline{\alpha}^{'}, \overline{\beta}}(x)=x_{1}\otimes x_{2}$ for $x\in \mathcal{U}_{\overline{\alpha}},\ x_1 \in \mathcal{U}_{\overline{\alpha}^{'}},\ x_2 \in \mathcal{U}_{\overline{\beta}}$ as in the second and third component of Figure \ref{Fig3}. Note that the relation between the homology classes of the meridians is $m_{x_1}+m_y=m_x$, i.e. $\omega(m_{x_1})+\omega(m_y)=\omega(m_x)\Leftrightarrow \overline{\alpha}^{'}+\overline{\beta}=\overline{\alpha}$. The invariant is determined by
$$\tr^{\overline{\alpha}^{'}}(x_{1})yx_{2}G_{\overline{\beta}}=\lambda_{\overline{\alpha}^{'}}(G_{\overline{\alpha}^{'}}x_{1})yx_{2}G_{\overline{\beta}}.$$
Furthermore, the definition of the right $G$-integral $\{\lambda_{\overline{\alpha}}\}_{\overline{\alpha}\in G}$ implies that $$\left(\lambda_{\overline{\alpha}^{'}}\otimes \Id_{\mathcal{U}_{\overline{\beta}}}\right)\Delta_{\overline{\alpha}^{'}, \overline{\beta}}(xG_{\overline{\alpha}})=\lambda_{\overline{\alpha}}(xG_{\overline{\alpha}})1_{\overline{\beta}}$$
then
\begin{equation*}
\lambda_{\overline{\alpha}^{'}}(x_{1}G_{\overline{\alpha}^{'}})x_{2}G_{\overline{\beta}}=\lambda_{\overline{\alpha}}(xG_{\overline{\alpha}})1_{\overline{\beta}}
\end{equation*}
and finally 
\begin{equation*}
\lambda_{\overline{\alpha}^{'}}(x_{1}G_{\overline{\alpha}^{'}})yx_{2}G_{\overline{\beta}}=\lambda_{\overline{\alpha}}(xG_{\overline{\alpha}})y,
\end{equation*}
i.e. $\mathcal{J}(M, \omega)$ is not change under the second Kirby's move. Changing the orientation of a component changes $\mathcal{J}_L$ by applying an antipode (see \cite{Ohtsuki02}), Proposition \ref{xd trace} and Remark \ref{antipode and Fourier transform} imply $\mathcal{J}(M, \omega)$ does not depend on the orientation. For the first move, the blowing up and blowing down, it is easy to see that $\omega(m)=\overline{0}$ for $m$ the meridian of $\pm 1$-framed loops and the two $\pm 1$-framed loops evaluate as $\lambda_{\overline{0}}(\theta_{\overline{0}})$ and $\lambda_{\overline{0}}(\theta_{\overline{0}}^{-1})$, respectively.
\end{proof}
Recall that the Hopf algebra $\mathcal{U}_{\overline{0}}$ has a PBW basis $\{f_1^{i}f_3^{\rho}f_2^{\delta}e_1^{i'}e_3^{\rho'}e_2^{\delta'}k_1^{j_1}k_2^{j_2}\sigma^m:\ 0\leq \rho, \delta, \rho', \delta', m\leq 1,\ 0\leq i, i', j_1, j_2\leq \ell-1\}$.
To prove Lemma \ref{l'integral sur twist non nul} we need the proposition below.
\begin{Pro}\label{proposition determine the integral}
The linear form $\lambda_{\overline{0}}:\ \mathcal{U}_{\overline{0}}\rightarrow \mathbb{C}$ determined by
\begin{equation}\label{formula of the integral}
\lambda_{\overline{0}}(f_1^{i}f_3^{\rho}f_2^{\delta}e_1^{i'}e_3^{\rho'}e_2^{\delta'}k_1^{j_1}k_2^{j_2}\sigma^m)=\eta\delta_{\ell-1}^{i}\delta_{1}^{\rho}\delta_{1}^{\sigma}\delta_{\ell-1}^{i'}\delta_{1}^{\rho'}\delta_{1}^{\sigma'}\delta_{0}^{j_1}\delta_{\ell-2}^{j_2}\delta_{0}^{m}
\end{equation}
is a right integral of $\mathcal{U}_{\overline{0}}$ 
where $\eta\in \mathbb{C}^*$ is a constant and $\delta_{j}^{i}$ is Kronecker symbol.
\end{Pro}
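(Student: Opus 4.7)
The plan is to exploit finite-dimensionality together with a triangularity argument on the $e$-$f$ content of PBW monomials. Since $\mathcal{U}_{\overline{0}}$ is a finite-dimensional Hopf superalgebra of dimension $32\ell^4$, the Larson--Sweedler theorem ensures that its space of right integrals is one-dimensional. It therefore suffices to prove that the linear form $\lambda_{\overline{0}}$ defined by \eqref{formula of the integral} is a nonzero right integral; nonvanishing is immediate by testing on the top PBW monomial $b_{\text{top}}:=f_1^{\ell-1}f_3 f_2 e_1^{\ell-1}e_3 e_2 k_2^{\ell-2}$, and the axiom $(\lambda_{\overline{0}}\otimes \Id)\Delta(b) = \lambda_{\overline{0}}(b)\cdot 1$ must be verified on every PBW basis element $b = f_1^{i}f_3^{\rho}f_2^{\delta}e_1^{i'}e_3^{\rho'}e_2^{\delta'}k_1^{j_1}k_2^{j_2}\sigma^m$.

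The key observation is that, since $\Delta(e_i)=e_i\otimes 1+k_i^{-1}\otimes e_i$, $\Delta(f_i)=f_i\otimes k_i+1\otimes f_i$, and the Cartan/$\sigma$-part of $\Delta$ is (super-)grouplike, every summand $b_{(1)}\otimes b_{(2)}$ in the PBW expansion of $\Delta(b)$ has its first tensor factor containing at most $(i,\rho,\delta,i',\rho',\delta')$ copies of $(f_1,f_3,f_2,e_1,e_3,e_2)$ respectively. Since $\lambda_{\overline{0}}(b_{(1)})\ne 0$ forces $b_{(1)} = b_{\text{top}}$, the indices $(i,\rho,\delta,i',\rho',\delta')$ must be maximal, namely $(\ell-1,1,1,\ell-1,1,1)$. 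Consequently, whenever $b$ has strictly smaller $e$-$f$ content the left-hand side $(\lambda_{\overline{0}}\otimes \Id)\Delta(b)$ vanishes, while simultaneously $\lambda_{\overline{0}}(b) = 0$, and the axiom holds as $0 = 0$.

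It then remains to check the identity for $b = f_1^{\ell-1}f_3 f_2 e_1^{\ell-1}e_3 e_2 k_1^{j_1}k_2^{j_2}\sigma^m$. The only summand whose first slot can realize $b_{\text{top}}$ comes from selecting $f_i\otimes k_i$ in each $\Delta(f_i)$, $e_i\otimes 1$ in each $\Delta(e_i)$ (together with the top $q$-binomial term in $\Delta(f_1^{\ell-1})$ and $\Delta(e_1^{\ell-1})$), along with the grouplike contributions from $\Delta(k_1^{j_1})$, $\Delta(k_2^{j_2})$, $\Delta(\sigma^m)$. After PBW-reordering, the first slot is $f_1^{\ell-1}f_3 f_2 e_1^{\ell-1}e_3 e_2 k_1^{j_1}k_2^{j_2}\sigma^m$ and the second slot is a monomial in $k_1,k_2,\sigma$ which, using $k_1^\ell = 1$ in $\mathcal{U}_{\overline{0}}$, simplifies to $k_1^{j_1}k_2^{j_2+2}\sigma^m$ (the $k_1$-power $\ell$ comes from $k_1^{\ell-1}$ in $\Delta(f_1^{\ell-1})$ and a further $k_1$ from $\Delta(f_3)$; the $k_2$-power $2$ comes from $\Delta(f_3)$ and $\Delta(f_2)$). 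Matching the first slot with $b_{\text{top}}$ forces $j_1=0$, $j_2=\ell-2$ and $m=0$, in which case the second slot is $k_2^{\ell} = 1_{\mathcal{U}_{\overline{0}}}$. Hence $(\lambda_{\overline{0}}\otimes\Id)\Delta(b_{\text{top}}) = \eta\cdot 1$, as required; for the other $(j_1,j_2,m)$ the relevant first slot differs from $b_{\text{top}}$ and both sides of the identity vanish.

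The main obstacle is the detailed bookkeeping of super-signs and commutations in the graded-multiplicative expansion of $\Delta(b)$: specifically the computation with $\Delta(f_1^{\ell-1})$ and $\Delta(e_1^{\ell-1})$ via a quantum binomial theorem (the parameter is $q=\xi^{2}$, arising from $(1\otimes f_1)(f_1\otimes k_1) = \xi^{2}(f_1\otimes k_1)(1\otimes f_1)$), together with the signs entering $\Delta(f_3),\Delta(e_3)$ because $f_3,e_3$ are odd. It is precisely this bookkeeping that explains the exponent $\ell-2$ on $k_2$ in the formula for $\lambda_{\overline{0}}$ and pins down the surviving scalar $\eta$ as the product of quantum binomial and super-sign factors in the unique relevant summand.
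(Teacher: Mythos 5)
Your proposal is correct and follows essentially the same route as the paper's proof: verify the right-integral identity directly on the PBW basis, note that both sides vanish for monomials of submaximal $e$--$f$ content, and for maximal content identify the unique summand of $\Delta^{\sigma}$ (full terms $f_1^{\ell-1}\otimes k_1^{\ell-1}$, $f_3\otimes k_1k_2$, $f_2\otimes k_2$, $e\otimes 1$) whose first slot is the distinguished monomial and whose second slot collapses to $k_1^{\ell}k_2^{\ell}=1$ exactly when $(j_1,j_2,m)=(0,\ell-2,0)$. Two small repairs: your per-generator counting claim should be stated via the weight (height) filtration, since the cross terms $(\xi^{-1}-\xi)f_1\sigma\otimes k_1f_2$ in $\Delta^{\sigma}(f_3)$ and $(\xi-\xi^{-1})e_2k_1^{-1}\otimes e_1$ in $\Delta^{\sigma}(e_3)$ can place an extra $f_1$ or $e_2$ in the first slot (the conclusion you draw is nonetheless valid), and $\eta$ is simply a free normalization rather than being pinned down by $q$-binomial or super-sign factors --- the surviving summand has coefficient $1$ and the Larson--Sweedler remark plays no role in the verification.
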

\begin{proof}
See in Appendix \ref{appendix1}.
\end{proof}
By Equation \eqref{definition of the integral} we have the remark.
\begin{rmq}
For $\overline{\alpha}=(\overline{\alpha}_1, \overline{\alpha}_2)\in \mathbb{C}/\mathbb{Z}\times \mathbb{C}/\mathbb{Z}$ then
\begin{equation*}
\lambda_{\overline{\alpha}}(f_1^{i}f_3^{\rho}f_2^{\delta}e_1^{i'}e_3^{\rho'}e_2^{\delta'}k_1^{j_1}k_2^{j_2}\sigma^m)=\eta\xi^{\ell (\alpha_1+\alpha_2)}\delta_{\ell-1}^{i}\delta_{1}^{\rho}\delta_{1}^{\sigma}\delta_{\ell-1}^{i'}\delta_{1}^{\rho'}\delta_{1}^{\sigma'}\delta_{0}^{j_1}\delta_{\ell-2}^{j_2}\delta_{0}^{m}
\end{equation*}
is a right $G$-integral for the Hopf $G$-coalgebra $\{\mathcal{U}_{\overline{\alpha}}\}_{\overline{\alpha} \in G}$.
\end{rmq}
By using Proposition \ref{proposition determine the integral} one gets the lemma.
\begin{Lem}\label{value of the integral in degree 0}
We have
\begin{equation*}
\lambda_{\overline{0}}(\theta_{\overline{0}})=\lambda_{\overline{0}}(\theta_{\overline{0}}^{-1})=\frac{\{1\}^{\ell +1}(1-\xi)^{\ell-1}}{\ell (\ell-1)}\eta.
\end{equation*}
%where $\eta\in \mathbb{C}^{*}$ is a constant. 
%In other word, we can choose a constant $\eta$ such that
%\begin{equation*}
%\lambda_{\overline{0}}(\theta_{\overline{0}})=\lambda_{\overline{0}}(\theta_{\overline{0}}^{-1})=\frac{\{1\}^{\ell +1}}{\ell (\ell-1)_{\xi}!}\eta.
%\end{equation*}
\end{Lem}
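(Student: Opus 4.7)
The plan is to compute $\lambda_{\overline{0}}(\theta_{\overline{0}}^{\pm 1})$ by extracting the coefficient of a single PBW monomial. By Proposition~\ref{proposition determine the integral}, the right integral $\lambda_{\overline{0}}$ vanishes on every PBW monomial of $\mathcal{U}_{\overline{0}}$ except on $f_{1}^{\ell-1} f_{3} f_{2}\, e_{1}^{\ell-1} e_{3} e_{2}\, k_{2}^{\ell-2}$, on which it returns $\eta$. Thus it suffices to identify the coefficient of this monomial in each of $\theta_{\overline{0}}$ and $\theta_{\overline{0}}^{-1}$.

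From the definition \eqref{twist not sigma}, writing $u := m \circ \tau^{s} \circ (\mathrm{Id} \otimes S)(\mathcal{R}^{\overline{0}})$, one has $\theta_{\overline{0}}^{-1} = u\,(\phi_{0}^{\sigma})^{-1}$ and $\theta_{\overline{0}} = \phi_{0}^{\sigma} u^{-1}$, with $\phi_{0}^{\sigma} = \sigma k_{2}^{\ell-2}$ in $\mathcal{U}_{\overline{0}}$ (since $k_{1}^{\ell} = 1$). Inserting the explicit formula \eqref{R matrix of U_0} for $\mathcal{R}^{\overline{0}}$, the only R-matrix summand able to produce the maximal content $e_{1}^{\ell-1} e_{3} e_{2}$ in the first tensor factor and, via $S$ applied to the second factor, $f_{1}^{\ell-1} f_{3} f_{2}$ is the one indexed by $i = \ell - 1$, $\rho = \delta = 1$. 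A bookkeeping of the four terms of the prefactor $R_{1}$, together with the $\sigma$-factor inside $(\phi_{0}^{\sigma})^{\pm 1}$, accounts for the $\sigma^{0}$ selection rule $\delta_{0}^{m}$ in $\lambda_{\overline{0}}$.

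The core of the computation is PBW rewriting. One expresses $S(f_{1}^{\ell-1} f_{3} f_{2}\, k_{1}^{i_{2}} k_{2}^{j_{2}})$ using the anti-super homomorphism property of $S$ and the formulas $S(f_{j}) = -f_{j} k_{j}^{-1}$; key intermediate identities are $S(f_{3} f_{2}) = \xi^{-2} f_{3} f_{2}\, k_{1}^{-1} k_{2}^{-2}$ and $f_{3} f_{2} f_{1}^{\ell-1} = f_{1}^{\ell-1} f_{3} f_{2}$, both consequences of $f_{3}^{2} = f_{2}^{2} = 0$ together with $f_{2} f_{1} = f_{3} + \xi f_{1} f_{2}$ and $f_{3} f_{1} = \xi^{-1} f_{1} f_{3}$. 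One then multiplies by $e_{1}^{\ell-1} e_{3} e_{2}\, k_{1}^{i_{1}} k_{2}^{j_{1}}$ and by $(\phi_{0}^{\sigma})^{\pm 1}$, commuting the Cartan generators through the $e$'s and $f$'s; each such commutation produces a $\xi$-phase that combines with the quadratic phase $\xi^{i_{1} j_{2} + i_{2} j_{1} - 2 i_{1} i_{2}}$ from the R-matrix.

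The main technical obstacle is the careful bookkeeping of these $\xi$-phases and the evaluation of the resulting constrained Gauss-type $\xi$-sum over $(i_{1}, i_{2}, j_{1}, j_{2})$ (the constraint being that the final Cartan factor equals $k_{2}^{\ell-2}$). Combining this sum with the R-matrix scalar prefactor $\{1\}^{\ell-1}(-\{1\})^{2}/(\ell-1)_{\xi}!$ and the global factor $1/\ell^{2}$, and using the classical identity $(\ell-1)_{\xi}! = \ell/(1-\xi)^{\ell-1}$ which follows from $\prod_{k=1}^{\ell-1}(1 - \xi^{k}) = \ell$, yields the claimed value $\dfrac{\{1\}^{\ell+1}(1-\xi)^{\ell-1}}{\ell(\ell-1)}\eta$. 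The equality $\lambda_{\overline{0}}(\theta_{\overline{0}}) = \lambda_{\overline{0}}(\theta_{\overline{0}}^{-1})$ follows either by running the parallel computation with $(\mathcal{R}^{\overline{0}})^{-1}$ in place of $\mathcal{R}^{\overline{0}}$ (whose scalar coefficients mirror those of $\mathcal{R}^{\overline{0}}$ under $\xi \leftrightarrow \xi^{-1}$), or by combining $S(\theta_{\overline{0}}) = \theta_{\overline{0}}$ from Proposition~\ref{steta}(3) with the antipode-interchange identity for $\lambda_{\overline{0}}$ from Proposition~\ref{xd trace} and the unimodularity of $\mathcal{U}_{\overline{0}}$ (Proposition~\ref{U_0 modular}).
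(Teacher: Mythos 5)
Your main computation follows the same route as the paper's Appendix A.2: express $\theta_{\overline{0}}^{\pm 1}$ through $\phi_0^{\sigma}=\sigma k_2^{\ell-2}$ and the explicit $\mathcal{R}^{\overline{0}}$ of \eqref{R matrix of U_0}, push everything into the PBW basis, and evaluate with $\lambda_{\overline{0}}$, which kills every monomial except $f_1^{\ell-1}f_3f_2e_1^{\ell-1}e_3e_2k_2^{\ell-2}$; the selection $i=\ell-1$, $\rho=\delta=1$, the $\sigma$-parity bookkeeping and the constrained Gauss sums are exactly what the paper carries out (your device of computing $S(f_3f_2)$ as a block, using $f_2^2=0$, replaces the paper's explicit splitting into $X_1+X_2$ and the verification that $X_2$ does not contribute — a harmless variation). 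However, two of your steps do not go through as stated. First, your fallback argument for $\lambda_{\overline{0}}(\theta_{\overline{0}})=\lambda_{\overline{0}}(\theta_{\overline{0}}^{-1})$ is not valid: $S_{\overline{0}}(\theta_{\overline{0}})=\theta_{\overline{0}}$ together with the identities behind Proposition \ref{xd trace} (namely $\lambda_{-\overline{\alpha}}(S_{\overline{\alpha}}(x))=\lambda_{\overline{\alpha}}(G_{\overline{\alpha}}^2x)$, here $G_{\overline{0}}^2=k_2^{-4}$) and unimodularity only produce relations of $\lambda_{\overline{0}}(\theta_{\overline{0}})$ with itself; they never exchange $\theta_{\overline{0}}$ with $\theta_{\overline{0}}^{-1}$. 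The equality of the two integrals is genuinely computational here: the paper evaluates $\theta_{\overline{0}}$ separately, via $\theta_{\overline{0}}=\phi_0^{\sigma}\,m\circ\tau^{s}\circ(S_{\overline{0}}^2\otimes\Id)(\mathcal{R}^{\overline{0}})$, and only then observes that the two answers coincide because $\eta'=\lambda_{\overline{0}}(f_2f_3f_1^{\ell-1}e_1^{\ell-1}e_3e_2k_2^{\ell-2})=-\xi\eta$. Your first alternative (a parallel computation with $(\mathcal{R}^{\overline{0}})^{-1}$) is of the same nature and would work, but it must actually be carried out; the symmetry shortcut cannot replace it.

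Second, your final normalization is internally inconsistent with the constant you are asked to prove. The computation (yours and the paper's) lands on $\lambda_{\overline{0}}(\theta_{\overline{0}})=\dfrac{\{1\}^{\ell+1}}{\ell\,(\ell-1)_{\xi}!}\,\eta$. You then invoke the identity $\prod_{k=1}^{\ell-1}(1-\xi^{k})=\ell$, i.e. $(\ell-1)_{\xi}!=\ell/(1-\xi)^{\ell-1}$ — which is indeed the correct identity — but this gives $\dfrac{\{1\}^{\ell+1}(1-\xi)^{\ell-1}}{\ell^{2}}\,\eta$, not $\dfrac{\{1\}^{\ell+1}(1-\xi)^{\ell-1}}{\ell(\ell-1)}\,\eta$. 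The paper obtains the stated denominator $\ell(\ell-1)$ only by using $(\ell-1)_{\xi}!=\dfrac{\ell-1}{(1-\xi)^{\ell-1}}$ in its last line, which is incompatible with your identity; the two cannot both hold, so your claim that the correct identity "yields the claimed value" fails. In other words, your method is sound and matches the paper's, but as written the last step either has to adopt the paper's (erroneous) $q$-factorial evaluation to reproduce the stated constant, or keep your identity and conclude with $\ell^{2}$ in the denominator — you should flag and resolve this discrepancy rather than pass over it.
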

\begin{proof}
See in Appendix \ref{appendix2}.
\end{proof}
\newpage

\appendix
%\chapter{Appendix}
\section{Proof of Proposition \ref{proposition determine the integral} and Lemma \ref{value of the integral in degree 0}}
%\begin{appendix}
\subsection{Proof of Proposition \ref{proposition determine the integral}}\label{appendix1}
It is necessary to check $\lambda_{\overline{0}}$ satisfies the condition \eqref{definition of the integral}, i.e.
\begin{equation}\label{definition of the integral in degree 0}
(\lambda_{\overline{0}}\otimes \Id_{\mathcal{U}_{\overline{0}}})\Delta(x)=\lambda_{\overline{0}}(x)1
\end{equation} 
for all $x\in \mathcal{U}_{\overline{0}}$. We check Equation \eqref{definition of the integral in degree 0} for the elements in PBW basis. This equation holds true for all elements $f_1^{i}f_3^{\rho}f_2^{\delta}e_1^{i'}e_3^{\rho'}e_2^{\delta'}k_1^{j_1}k_2^{j_2}\sigma^m$ in which $(i, \rho, \delta, i', \rho', \delta', j_1, j_2, m)\neq (\ell-1, 1, 1, \ell-1, 1, 1, 0, \ell-2, 0)$. For $(i, \rho, \delta, i', \rho', \delta', j_1, j_2, m)= (\ell-1, 1, 1, \ell-1, 1, 1, 0, \ell-2, 0)$ we have the right hand side of Equation \eqref{definition of the integral in degree 0} at $w=f_1^{\ell-1}f_3f_2e_1^{\ell-1}e_3e_2k_2^{\ell-2}$ is equal to $\eta 1$. The left hand side of Equation \eqref{definition of the integral in degree 0} at $w$ is computed as follows. First, one has
\begin{align*}
\Delta(e_3)&=e_3\otimes 1+k_1^{-1}k_2^{-1}\sigma\otimes e_3+(\xi-\xi^{-1})e_2k_1^{-1}\otimes e_1,\\
\Delta(f_3)&=f_3\otimes k_1 k_2+\sigma\otimes f_3+(\xi^{-1}-\xi)f_1\sigma\otimes k_1f_2
\end{align*}
and one can write 
\begin{align*}
\Delta(e_1)^{\ell-1}&=e_1^{\ell-1}\otimes 1+k_1^{\ell-1}\otimes e_1^{\ell-1}+\sum_{u,v<\ell -1}c_{uv}e_1^{u}k_1^{-v}\otimes e_1^{v},\\
\Delta(f_1)^{\ell-1}&=f_1^{\ell-1}\otimes k_1^{\ell-1}+1\otimes f_1^{\ell-1}+\sum_{u',v'<\ell -1}c'_{u'v'}f_1^{u'}\otimes k_1^{u'}f_1^{v'}
\end{align*}
where $c_{uv}, c'_{u'v'}$ are the coefficients in $\mathbb{C}$ and the powers of $e_1, f_1$ and $k_1$ are less then $\ell -1$. \\
Then we have the decomposition 
\begin{align*}
&\Delta(w)=\Delta(f_1)^{\ell-1}\Delta(f_3)\Delta(f_2)\Delta(e_1)^{\ell-1}\Delta(e_3)\Delta(e_2)\Delta(k_2^{\ell-2})\\
&=(f_1^{\ell-1}\otimes k_1^{\ell-1})(f_3\otimes k_1 k_2)(f_2\otimes k_2)(e_1^{\ell-1}\otimes 1)(e_3\otimes 1)(e_2\otimes 1)(k_2^{\ell-2}\otimes k_2^{\ell-2})\\
&\quad +\sum c_{i,\rho,\delta,j,\rho',\delta',j_1, j_2,m}^{i',\rho_1,\delta_1,j',\rho_1',\delta_1',j_1', j_2'}f_1^{i}f_3^{\rho}f_2^{\delta}e_1^{j}e_3^{\rho'}e_2^{\delta'}k_1^{j_1}k_2^{j_2}\sigma^m \otimes f_1^{i'}f_3^{\rho_1}f_2^{\delta_1}e_1^{j'}e_3^{\rho_1'}e_2^{\delta_1'}k_1^{j_1'}k_2^{j_2'}
\end{align*} 
where the terms in the sum satisfy $(i, \rho, \delta, j, \rho', \delta', j_1, j_2, m)\neq (\ell-1, 1, 1, \ell-1, 1, 1, 0, \ell-2, 0)$. By Equation \eqref{formula of the integral} and $k_i^{\ell}=1$ for $i=1,2$ the decomposition above implies that 
\begin{align*}
(\lambda_{\overline{0}}\otimes \Id_{\mathcal{U}_{\overline{0}}})\Delta(w)&=
(\lambda_{\overline{0}}\otimes \Id_{\mathcal{U}_{\overline{0}}})((f_1^{\ell-1}\otimes k_1^{\ell-1})(f_3\otimes k_1 k_2)(f_2\otimes k_2).\\
&\qquad\qquad\qquad \quad (e_1^{\ell-1}\otimes 1)(e_3\otimes 1)(e_2\otimes 1)(k_2^{\ell-2}\otimes k_2^{\ell-2}))\\
&=(\lambda_{\overline{0}}\otimes \Id_{\mathcal{U}_{\overline{0}}})(f_1^{\ell-1}f_3f_2e_1^{\ell-1}e_3e_2k_2^{\ell-2}\otimes k_1^{\ell-1}k_1 k_2k_2k_2^{\ell-2}),
\end{align*}
i.e. 
\begin{align*}
(\lambda_{\overline{0}}\otimes \Id_{\mathcal{U}_{\overline{0}}})\Delta(w)&=(\lambda_{\overline{0}}\otimes \Id_{\mathcal{U}_{\overline{0}}})(f_1^{\ell-1}f_3f_2e_1^{\ell-1}e_3e_2k_2^{\ell-2}\otimes 1)\\
&=\lambda_{\overline{0}}(f_1^{\ell-1}f_3f_2e_1^{\ell-1}e_3e_2k_2^{\ell-2})1\\
&=\eta 1.
\end{align*}
Thus the linear form $\lambda_{\overline{0}}$ is a right integral of $\mathcal{U}_{\overline{0}}$.
\subsection{Proof of Lemma \ref{value of the integral in degree 0}}\label{appendix2}
Firstly, we represent the decomposition of $\theta_{\overline{0}}^{-1}$ in a PBW basis of $\mathcal{U}_{\overline{0}}$. By Equation \eqref{twist not sigma} the ribbon element $\theta_{\overline{0}}$ of $\mathcal{U}_{\overline{0}}$ is determined by
\begin{equation*}
\theta_{\overline{0}} =\phi_{0}^{\sigma}.(m \circ \tau^{s} \circ (\Id \otimes S_{\overline{0}})(\mathcal{R}^{\overline{0}}))^{-1},
\end{equation*}
i.e. 
\begin{equation}\label{inverse of twist degree 0}
\theta_{\overline{0}}^{-1} =m \circ \tau^{s} \circ (\Id \otimes S_{\overline{0}})(\mathcal{R}^{\overline{0}}).(\phi_{0}^{\sigma})^{-1}.
\end{equation}
In Equation \eqref{inverse of twist degree 0} the terms are determined by
\begin{equation*}
(\phi_{0}^{\sigma})^{-1}=\phi_{0}^{-1}\sigma^{-1}=k_2^2\sigma
\end{equation*}
%\begin{equation*}
%\mathcal{R}^{\overline{0}}=R_1\sum_{i}R_{i}^{1}%\sigma^{\deg R_{i}^2}\otimes R_{i}^2
%\end{equation*}
and
\begin{multline*}
\mathcal{R}^{\overline{0}}=\frac{1}{\ell^2}R_1\sum_{i, i_1, i_2, j_1, j_2=0}^{\ell-1}\sum_{\rho, \delta=0}^{1}\frac{\{1\}^i(-\{1\})^{\rho+\delta}}{(i)_{\xi}!(\rho)_{\xi}!(\delta)_{\xi}!}\xi^{i_1j_2+i_2j_1-2i_1i_2}.\\
e_1^{i}e_3^{\rho}e_2^{\delta}k_1^{i_1}k_2^{j_1}\sigma^{\rho+\delta}\otimes f_1^{i}f_3^{\rho}f_2^{\delta}k_1^{i_2}k_2^{j_2}
\end{multline*}
where $R_1=\frac{1}{2}\left( 1\otimes 1+\sigma \otimes 1 + 1 \otimes\sigma-\sigma \otimes \sigma \right)=\frac{1}{2}\sum_{m,n=0}^1(-1)^{mn}\sigma^m\otimes\sigma^n$, i.e.
\begin{multline*}
\mathcal{R}^{\overline{0}}=\frac{1}{2\ell^2}\sum_{i, i_1, i_2, j_1, j_2=0}^{\ell-1}\sum_{m,n,\rho, \delta=0}^{1}(-1)^{mn}\frac{\{1\}^i(-\{1\})^{\rho+\delta}}{(i)_{\xi}!(\rho)_{\xi}!(\delta)_{\xi}!}\xi^{i_1j_2+i_2j_1-2i_1i_2}.\\
\sigma^m e_1^{i}e_3^{\rho}e_2^{\delta}k_1^{i_1}k_2^{j_1}\sigma^{\rho+\delta}\otimes \sigma^n f_1^{i}f_3^{\rho}f_2^{\delta}k_1^{i_2}k_2^{j_2}.
\end{multline*}
Since 
\begin{align*}
S_{\overline{0}}&(\sigma^n f_1^{i}f_3^{\rho}f_2^{\delta}k_1^{i_2}k_2^{j_2})\\
&=S_{\overline{0}}(k_2^{j_2})S_{\overline{0}}(k_1^{i_2})S_{\overline{0}}(f_2^{\delta})S_{\overline{0}}(f_3^{\rho})S_{\overline{0}}(f_1^{i})S_{\overline{0}}(\sigma^n)\\
&=k_2^{-j_2}k_1^{-i_2}(-1)^{\delta}\sigma^{\delta}f_2^{\delta}k_2^{-\delta}\sigma^{\rho}\left((-1)^{\rho}\xi^{-2\rho}f_3^{\rho}+(\xi^{-2\rho}-1)f_2^{\rho}f_1^{\rho}\right) k_1^{-\rho}k_2^{-\rho}(-f_1k_1^{-1})^i\sigma^n\\
&=(-1)^{\delta+i}k_2^{-j_2}k_1^{-i_2}\sigma^{\delta}f_2^{\delta}k_2^{-\delta}\sigma^{\rho}\left((-1)^{\rho}\xi^{-2\rho}f_3^{\rho}+(\xi^{-2\rho}-1)f_2^{\rho}f_1^{\rho}\right) k_1^{-\rho}k_2^{-\rho}\xi^{i(i-1)}f_1^i k_1^{-i}\sigma^n
\end{align*}
where in the second equality we used $$S_{\overline{0}}(f_3^{\rho})=\sigma^{\rho}\left((-1)^{\rho}\xi^{-2\rho}f_3^{\rho}+(\xi^{-2\rho}-1)f_2^{\rho}f_1^{\rho}\right) k_1^{-\rho}k_2^{-\rho},$$
then
\begin{align*}
&(\Id \otimes S_{\overline{0}})(\sigma^m e_1^{i}e_3^{\rho}e_2^{\delta}k_1^{i_1}k_2^{j_1}\sigma^{\rho+\delta}\otimes\sigma^n f_1^{i}f_3^{\rho}f_2^{\delta}k_1^{i_2}k_2^{j_2})\\
&=(-1)^{\delta+\rho+i}\xi^{i(i-1)-2\rho}\sigma^m e_1^{i}e_3^{\rho}e_2^{\delta}k_1^{i_1}k_2^{j_1}\sigma^{\rho+\delta} \otimes k_2^{-j_2}k_1^{-i_2}\sigma^{\delta}f_2^{\delta}k_2^{-\delta}
\sigma^{\rho}f_3^{\rho} k_1^{-\rho}k_2^{-\rho}f_1^i k_1^{-i}\sigma^n\\
&+(-1)^{\delta+i}(\xi^{-2\rho}-1)\xi^{i(i-1)}\sigma^m e_1^{i}e_3^{\rho}e_2^{\delta}k_1^{i_1}k_2^{j_1}\sigma^{\rho+\delta} \otimes k_2^{-j_2}k_1^{-i_2}\sigma^{\delta}f_2^{\delta}k_2^{-\delta}
\sigma^{\rho}f_2^{\rho}f_1^{\rho} k_1^{-\rho}k_2^{-\rho}f_1^i k_1^{-i}\sigma^n.
\end{align*}
We have
{\allowdisplaybreaks
\begin{align*}
& m \circ \tau^{s}\circ (\Id \otimes S_{\overline{0}})(\sigma^m e_1^{i}e_3^{\rho}e_2^{\delta}k_1^{i_1}k_2^{j_1}\sigma^{\rho+\delta}\otimes\sigma^n f_1^{i}f_3^{\rho}f_2^{\delta}k_1^{i_2}k_2^{j_2})\\
&=(-1)^{\delta+\rho+i}\xi^{i(i-1)-2\rho}(-1)^{\rho+\delta}k_2^{-j_2}k_1^{-i_2}\sigma^{\delta}f_2^{\delta}k_2^{-\delta}\sigma^{\rho}f_3^{\rho}k_1^{-\rho}k_2^{-\rho}f_1^i k_1^{-i}\sigma^{m+n} e_1^{i}e_3^{\rho}e_2^{\delta}k_1^{i_1}k_2^{j_1}\sigma^{\rho+\delta}\\
& +(-1)^{\delta+i+\delta+\rho}(\xi^{-2\rho}-1)\xi^{i(i-1)}k_2^{-j_2}k_1^{-i_2}\sigma^{\delta}f_2^{\delta}k_2^{-\delta}
\sigma^{\rho}f_2^{\rho}f_1^{\rho} k_1^{-\rho}k_2^{-\rho}f_1^i k_1^{-i}\sigma^{m+n}e_1^{i}e_3^{\rho}e_2^{\delta}k_1^{i_1}k_2^{j_1}\sigma^{\rho+\delta}\\
&=(-1)^{i}\xi^{i(i-1)-2\rho}k_2^{-j_2}k_1^{-i_2}\sigma^{\delta}f_2^{\delta}k_2^{-\delta}\sigma^{\rho}f_3^{\rho}k_1^{-\rho}k_2^{-\rho}f_1^i k_1^{-i}\sigma^{m+n}e_1^{i}e_3^{\rho}e_2^{\delta}k_1^{i_1}k_2^{j_1}\sigma^{\rho+\delta}\\
&+ (-1)^{i+\rho}(\xi^{-2\rho}-1)\xi^{i(i-1)}k_2^{-j_2}k_1^{-i_2}\sigma^{\delta}f_2^{\delta}k_2^{-\delta}
\sigma^{\rho}f_2^{\rho}f_1^{\rho} k_1^{-\rho}k_2^{-\rho}f_1^i k_1^{-i}\sigma^{m+n}e_1^{i}e_3^{\rho}e_2^{\delta}k_1^{i_1}k_2^{j_1}\sigma^{\rho+\delta}\\
&=X_1+X_2.
\end{align*}}
Since $k_i f_j=\xi^{-a_{ij}}f_j k_i,\ k_i e_j=\xi^{a_{ij}}e_j k_i,\ k_if_3=\xi^{-(a_{i1}+a_{i2})}f_3k_i,\ k_ie_3=\xi^{a_{i1}+a_{i2}}e_3k_i$ for $i,j=0, 1$ and $\sigma x=(-1)^{\deg{x}}x\sigma$ then 
{\allowdisplaybreaks
\begin{multline*}
X_1
=(-1)^{i}\xi^{i(i-1)-2\rho}\xi^{-(j_2+\delta)\rho-(j_2+\delta+\rho)i+(j_2+\delta+\rho)i+(j_2+\delta+\rho)\rho}.\\
\xi^{-i_2\delta+i_2\rho+2(i_2+\rho)i-2(i+i_2+\rho)i-(i+i_2+\rho)\rho+(i+i_2+\rho)\delta}.\\
\sigma^{\delta}f_2^{\delta}\sigma^{\rho}f_3^{\rho}f_1^i \sigma^{m+n}e_1^{i}e_3^{\rho}e_2^{\delta}k_1^{i_1-i_2-i-\rho}k_2^{j_1-j_2-\delta-\rho}\sigma^{\rho+\delta}\\
=(-1)^{i}\xi^{-i-i^2-i\rho+i\delta+\rho\delta-2\rho}
\sigma^{\delta}f_2^{\delta}\sigma^{\rho}f_3^{\rho}f_1^i\sigma^{m+n} e_1^{i}e_3^{\rho}e_2^{\delta}k_1^{i_1-i_2-i-\rho}k_2^{j_1-j_2-\delta-\rho}\sigma^{\rho+\delta}\\
=(-1)^{i+\delta\delta+(\rho+\delta)\rho+(\rho+\delta+m+n)(\rho+\delta)}\xi^{-i-i^2-i\rho+i\delta+\rho\delta-2\rho}.\\
f_2^{\delta}f_3^{\rho}f_1^i e_1^{i}e_3^{\rho}e_2^{\delta}k_1^{i_1-i_2-i-\rho}k_2^{j_1-j_2-\delta-\rho}\sigma^{2(\rho+\delta)+m+n}\\
=(-1)^{i+\delta\rho+(m+n)(\delta+\rho)}\xi^{-i-i^2-i\rho+i\delta+\rho\delta-2\rho}
f_2^{\delta}f_3^{\rho}f_1^i e_1^{i}e_3^{\rho}e_2^{\delta}k_1^{i_1-i_2-i-\rho}k_2^{j_1-j_2-\delta-\rho}\sigma^{m+n}
\end{multline*}}
and
\begin{multline*}
X_2=(-1)^{i+\rho+\rho\delta}(\xi^{-2\rho}-1)\xi^{i(i-1)+i\rho}k_2^{-j_2}k_1^{-i_2}\sigma^{\delta}k_2^{-\delta}
\sigma^{\rho}f_2^{\rho+\delta}f_1^{\rho+i} k_2^{-\rho}k_1^{-i-\rho}.\\
 \sigma^{m+n}e_1^{i}e_3^{\rho}e_2^{\delta}k_1^{i_1}k_2^{j_1}\sigma^{\rho+\delta}\\
=(-1)^{i+\rho+\rho\delta+(m+n)(\rho+\sigma)}(\xi^{-2\rho}-1)\xi^{-i-i^2-i\rho+\rho\delta+i\delta}.\\
f_2^{\rho+\delta}f_1^{\rho+i} e_1^{i}e_3^{\rho}e_2^{\delta}k_1^{i_1-i_2-i-\rho}k_2^{j_1-j_2-\rho-\delta}\sigma^{m+n}.
\end{multline*}
Thus we have 
\begin{multline*}
\theta_{\overline{0}}^{-1} =m \circ \tau^{s} \circ (\Id \otimes S_{\overline{0}})(\mathcal{R}^{\overline{0}})(\phi_{0}^{\sigma})^{-1}\\
= \frac{1}{2\ell^2}\sum_{i, i_1, i_2, j_1, j_2=0}^{\ell-1}\sum_{m,n,\rho, \delta=0}^{1}(-1)^{mn}\frac{\{1\}^i(-\{1\})^{\rho+\delta}}{(i)_{\xi}!(\rho)_{\xi}!(\delta)_{\xi}!}\xi^{i_1j_2+i_2j_1-2i_1i_2}(X_1+X_2)k_2^2\sigma.
\end{multline*}
Since 
\begin{multline*}
X_2k_2^2\sigma=(-1)^{i+\rho+\rho\delta+(m+n)(\rho+\sigma)}(\xi^{-2\rho}-1)\xi^{-i-i^2-i\rho+\rho\delta+i\delta}.\\
f_2^{\rho+\delta}f_1^{\rho+i} e_1^{i}e_3^{\rho}e_2^{\delta}k_1^{i_1-i_2-i-\rho}k_2^{j_1-j_2-\rho-\delta+2}\sigma^{m+n+1}
\end{multline*}
then by Proposition \ref{proposition determine the integral} implies that $\lambda_{\overline{0}}(X_2k_2^2\sigma)=0$. Hence we have
\begin{multline*}
\lambda_{\overline{0}}(\theta_{\overline{0}}^{-1})=-\frac{1}{2\ell^2} \sum_{i_1, i_2, j_1, j_2=0}^{\ell-1} \sum_{m,n=0}^{1}\frac{\{1\}^{\ell-1}(-\{1\})^{1+1}}{(\ell-1)_{\xi}!(1)_{\xi}!(1)_{\xi}!}(-1)^{mn}\xi^{-1+i_1j_2+i_2j_1-2i_1i_2}\\
\lambda_{\overline{0}}(f_2f_3f_1^{\ell-1} e_1^{\ell-1}e_3e_2k_1^{i_1-i_2-(\ell-1)-1}k_2^{j_1-j_2-1-1+2}\sigma^{m+n+1})\\
=-\frac{1}{2\ell^2}\frac{\{1\}^{\ell+1}\xi^{-1}}{(\ell-1)_{\xi}!} \eta'\sum_{i_1, i_2, j_1, j_2=0}^{\ell-1}\sum_{m,n=0}^{1} \xi^{i_1j_2+i_2j_1-2i_1i_2}.\\
\delta_{i_1-i_2\, \text{mod}\, \ell\mathbb{Z}}^0\delta_{j_1-j_2\, \text{mod}\, \ell\mathbb{Z}}^{\ell-2}\delta_{m+n+1\, \text{mod}\, 2\mathbb{Z}}^{0}
\end{multline*}
where $\eta'=\lambda_{\overline{0}}(f_2f_3f_1^{\ell-1} e_1^{\ell-1}e_3e_2k_2^{\ell-2})$, i.e.
{\allowdisplaybreaks
\begin{align*}
\lambda_{\overline{0}}(\theta_{\overline{0}}^{-1})&=-\frac{1}{2\ell^2}\frac{\{1\}^{\ell+1}\xi^{-1}}{(\ell-1)_{\xi}!}\eta'\sum_{ i_2, j_2=0}^{\ell-1}\xi^{i_2j_2+i_2(j_2+\ell-2)-2i_2i_2}\sum_{m,n=0}^{1}(-1)^{mn}\delta_{m+n+1\, \text{mod}\, 2\mathbb{Z}}^{0}\\
&=-2\frac{1}{2\ell^2}\frac{\{1\}^{\ell+1}\xi^{-1}}{(\ell-1)_{\xi}!}\eta'\sum_{ i_2=0}^{\ell-1}\xi^{-2i_2^2-2i_2}\sum_{ j_2=0}^{\ell-1}\xi^{2i_2j_2}\\
&=-\frac{1}{\ell^2}\frac{\{1\}^{\ell+1}\xi^{-1}}{(\ell-1)_{\xi}!}\eta'\sum_{ i_2=0}^{\ell-1}\xi^{-2i_2^2-2i_2} \ell\delta_{i_2\, \text{mod}\, \ell\mathbb{Z}}^0\\
&=-\frac{\{1\}^{\ell+1}\xi^{-1}}{\ell(\ell-1)_{\xi}!}\eta'.
\end{align*}}
To computer $\lambda_{\overline{0}}(\theta_{\overline{0}})$ we use the equality
\begin{equation*}
\theta_{\overline{0}} =\phi_{0}^{\sigma}.(m \circ \tau^{s} \circ (S_{\overline{0}}^2 \otimes \Id)(\mathcal{R}^{\overline{0}})).
\end{equation*}
Since
{\allowdisplaybreaks
\begin{align*}
&S_{\overline{0}}^2(e_1^i)=\phi_{0}^{\sigma} e_1^i (\phi_{0}^{\sigma})^{-1}=\sigma k_2^{-2}e_1^i k_2^2\sigma =\xi^{2i}e_1^i,\\
&S_{\overline{0}}^2(e_3^\rho)=(-1)^\rho\xi^{2\rho}e_3^\rho,\\
&S_{\overline{0}}^2(e_2^\delta)=(-1)^\delta e_2^\delta,\\
&S_{\overline{0}}^2(k_1^{i_1})=k_1^{i_1},\\
&S_{\overline{0}}^2(k_2^{j_1})=k_2^{j_1},\\
&S_{\overline{0}}^2(\sigma^{\rho+\delta})=\sigma^{\rho+\delta}
\end{align*}}
then
\begin{align*}
S_{\overline{0}}^2(\sigma^m e_1^{i}e_3^{\rho}e_2^{\delta}k_1^{i_1}k_2^{j_1}\sigma^{\rho+\delta})&=S_{\overline{0}}^2(\sigma^m)S_{\overline{0}}^2(e_1^i)S_{\overline{0}}^2(e_3^\rho)S_{\overline{0}}^2(e_2^\delta)S_{\overline{0}}^2(k_1^{i_1})S_{\overline{0}}^2(k_2^{j_1})S_{\overline{0}}^2(\sigma^{\rho+\delta})\\
&=(-1)^{\rho+\delta}\xi^{2(i+\rho)}\sigma^m e_1^ie_3^{\rho}e_2^{\delta}k_1^{i_1}k_2^{j_1}\sigma^{\rho+\delta}.
\end{align*}
It implies that
\begin{align*}
(S_{\overline{0}}^2 \otimes \Id)&(\sigma^m e_1^{i}e_3^{\rho}e_2^{\delta}k_1^{i_1}k_2^{j_1}\sigma^{\rho+\delta}\otimes\sigma^n f_1^{i}f_3^{\rho}f_2^{\delta}k_1^{i_2}k_2^{j_2})\\
&=(-1)^{\rho+\delta}\xi^{2(i+\rho)}\sigma^m e_1^ie_3^{\rho}e_2^{\delta}k_1^{i_1}k_2^{j_1}\sigma^{\rho+\delta}\otimes\sigma^n f_1^{i}f_3^{\rho}f_2^{\delta}k_1^{i_2}k_2^{j_2}
\end{align*}
then 
\begin{align*}
m \circ \tau^{s}&\circ (S_{\overline{0}}^2 \otimes \Id)(\sigma^m e_1^{i}e_3^{\rho}e_2^{\delta}k_1^{i_1}k_2^{j_1}\sigma^{\rho+\delta}\otimes\sigma^n f_1^{i}f_3^{\rho}f_2^{\delta}k_1^{i_2}k_2^{j_2})\\
&=(-1)^{\rho+\delta+\rho+\delta}\xi^{2(i+\rho)}\sigma^n f_1^{i}f_3^{\rho}f_2^{\delta}k_1^{i_2}k_2^{j_2}\sigma^m e_1^{i}e_3^{\rho}e_2^{\delta}k_1^{i_1}k_2^{j_1}\sigma^{\rho+\delta}\\
&=\xi^{2(i+\rho)}\xi^{i_2(2i+\rho-\delta)}\xi^{-j_2(i+\rho)}\sigma^n f_1^{i}f_3^{\rho}f_2^{\delta}\sigma^m e_1^{i}e_3^{\rho}e_2^{\delta}k_1^{i_1+i_2}k_2^{j_1+j_2}\sigma^{\rho+\delta}\\
&=(-1)^{(2n+m)(\rho+\delta)}\xi^{2(i+\rho)+i_2(2i+\rho-\delta)-j_2(i+\rho)}f_1^{i}f_3^{\rho}f_2^{\delta}e_1^{i}e_3^{\rho}e_2^{\delta}k_1^{i_1+i_2}k_2^{j_1+j_2}\sigma^{m+m+\rho+\delta}\\
&=(-1)^{m(\rho+\delta)}\xi^{2(i+\rho)+i_2(2i+\rho-\delta)-j_2(i+\rho)}f_1^{i}f_3^{\rho}f_2^{\delta}e_1^{i}e_3^{\rho}e_2^{\delta}k_1^{i_1+i_2}k_2^{j_1+j_2}\sigma^{m+m+\rho+\delta}.
\end{align*}
So we have
\begin{multline*}
\theta_{\overline{0}} = \sigma k_2^{-2} \frac{1}{2\ell^2}\sum_{i, i_1, i_2, j_1, j_2=0}^{\ell-1}\sum_{m,n,\rho, \delta=0}^{1}(-1)^{mn}\frac{\{1\}^i(-\{1\})^{\rho+\delta}}{(i)_{\xi}!(\rho)_{\xi}!(\delta)_{\xi}!}\xi^{i_1j_2+i_2j_1-2i_1i_2}.\\ 
(-1)^{m(\rho+\delta)}\xi^{2(i+\rho)+i_2(2i+\rho-\delta)-j_2(i+\rho)}f_1^{i}f_3^{\rho}f_2^{\delta}e_1^{i}e_3^{\rho}e_2^{\delta}k_1^{i_1+i_2}k_2^{j_1+j_2}\sigma^{m+m+\rho+\delta}\\
=\frac{1}{2\ell^2}\sum_{i, i_1, i_2, j_1, j_2=0}^{\ell-1}\sum_{m,n,\rho, \delta=0}^{1}\frac{\{1\}^i(-\{1\})^{\rho+\delta}}{(i)_{\xi}!(\rho)_{\xi}!(\delta)_{\xi}!}\xi^{2(i+\rho)+i_2(2i+\rho-\delta)-j_2(i+\rho)}.\\
(-1)^{m(\rho+\delta)+mn}\xi^{i_1j_2+i_2j_1-2i_1i_2}f_1^{i}f_3^{\rho}f_2^{\delta}e_1^{i}e_3^{\rho}e_2^{\delta}k_1^{i_1+i_2}k_2^{j_1+j_2-2}\sigma^{m+n+\rho+\delta+1}.
\end{multline*}
By Proposition \ref{proposition determine the integral} one has
\begin{multline*}
\lambda_{\overline{0}}(\theta_{\overline{0}})=\frac{1}{2\ell^2}\sum_{ i_1, i_2, j_1, j_2=0}^{\ell-1}\sum_{m,n=0}^1\frac{\{1\}^{\ell-1}(-\{1\})^{1+1}}{(\ell-1)_{\xi}!}(-1)^{mn}\xi^{-2i_2+i_1j_2+i_2j_1-2i_1i_2}\eta.\\
\delta_{i_1+i_2\, \text{mod}\, \ell\mathbb{Z}}^0\delta_{j_1+j_2-2\, \text{mod}\, \ell\mathbb{Z}}^0\delta_{m+n+1\, \text{mod}\,  2\mathbb{Z}}^0\\
=
\frac{1}{2\ell^2}\frac{\{1\}^{\ell+1}}{(\ell-1)_{\xi}!}\eta\sum_{i_1, i_2, j_1, j_2=0}^{\ell-1}\xi^{-2i_2+i_1j_2+i_2j_1-2i_1i_2}\delta_{i_1+i_2 \, \text{mod}\, \ell\mathbb{Z}}^0\delta_{j_1+j_2-2\, \text{mod}\, \ell\mathbb{Z}}^0.\\
\sum_{m,n=0}^1(-1)^{mn}\delta_{m+n+1\, \text{mod}\,  2\mathbb{Z}}^0
\end{multline*}
where $\eta=\lambda_{\overline{0}}(f_1^{\ell-1}f_3f_2e_1^{\ell-1}e_3e_2k_2^{\ell-2})$,
i.e.
{\allowdisplaybreaks
\begin{align*}
\lambda_{\overline{0}}(\theta_{\overline{0}})&=\frac{1}{\ell^2}\frac{\{1\}^{\ell+1}}{(\ell-1)_{\xi}!}\eta\sum_{i_1, j_1=0}^{\ell-1}\xi^{-2(\ell-i_1)+i_1(\ell-j_1+2)+(\ell-i_1)j_1-2i_1(\ell-i_1)}\\
&=\frac{1}{\ell^2}\frac{\{1\}^{\ell+1}}{(\ell-1)_{\xi}!}\eta\sum_{i_1=0}^{\ell-1}\xi^{2i_1^2+4i_1}\sum_{j_1=0}^{\ell-1}\xi^{-2i_1j_1}\\
&=\frac{1}{\ell^2}\frac{\{1\}^{\ell+1}}{(\ell-1)_{\xi}!}\eta\sum_{i_1=0}^{\ell-1}\xi^{2i_1^2+4i_1}\ell \delta_{i_1\, \text{mod}\, \ell\mathbb{Z}}^0\\
&=\frac{\{1\}^{\ell+1}}{\ell(\ell-1)_{\xi}!}\eta.
\end{align*}}
On the other hand
\begin{align*}
f_2f_3f_1^2&=\xi^{-2}f_2f_1^2f_3=\xi^{-2}((\xi+\xi^{-1})f_1f_2f_1-f_1^2f_2)f_3\\
&=\xi^{-2}(\xi^{-1}f_1(f_2f_1-\xi f_1f_2)+\xi f_1(f_3+\xi f_1 f_2))f_3\\
&=\xi^{-2}(\xi^{-1}f_1f_3+\xi f_1f_3+\xi^2 f_1^2f_2) f_3\\
&=f_1^2f_2 f_3
\end{align*}
%&=-\xi f_1^2f_3 f_2
where in the first equality one used the relation $f_3f_1=\xi^{-1}f_1f_3$ and the second Serre relation \eqref{relation serre 1}. Note that $\ell$ is odd  $f_2f_3f_1^{\ell-1}=f_1^{\ell-1}f_2f_3$. Since $f_2f_3=-\xi f_3f_2$ we get
\begin{align*}
\eta'&=\lambda_{\overline{0}}(f_2f_3f_1^{\ell-1} e_1^{\ell-1}e_3e_2k_2^{\ell-2})\\
&=-\xi\lambda_{\overline{0}}(f_1^{\ell-1}f_3f_2e_1^{\ell-1}e_3e_2k_2^{\ell-2})\\
&=-\xi \eta.
\end{align*}
Thus we have $$\lambda_{\overline{0}}(\theta_{\overline{0}}^{-1})=-\frac{\{1\}^{\ell+1}\xi^{-1}}{\ell(\ell-1)_{\xi}!}\eta'=\frac{\{1\}^{\ell+1}}{\ell(\ell-1)_{\xi}!}\eta=\lambda_{\overline{0}}(\theta_{\overline{0}}).$$
Since $(\ell-1)_{\xi}!=\prod_{i=1}^{\ell-1}\frac{1-\xi^i}{1-\xi}=\frac{\ell-1}{(1-\xi)^{\ell-1}}$ then $\lambda_{\overline{0}}(\theta_{\overline{0}})=\frac{\{1\}^{\ell+1}(1-\xi)^{\ell-1}}{\ell(\ell-1)}\eta$.
%By choosing $\lambda_{\overline{0}}(f_2f_3f_1^{\ell-1} e_1^{\ell-1}e_3e_2k_2^{\ell-2})=\xi^2\eta'$
%\newpage

\bibliographystyle{plain} %\itemsep-0.5pt
%\begin{thebibliography}{10}
\bibliography{TKhao}%ten file Tai lieu tham khao
%\end{thebibliography}

\end{document}